\theoremstyle{thmstyleone}%
\newtheorem{theorem}{Theorem}
\newtheorem{lemma}{Lemma}
\theoremstyle{thmstyletwo}%
\newtheorem{remark}{Remark}%
\theoremstyle{thmstylethree}%
\newcommand{\om}{\Omega}
\def\C{\mathbb{C}}
\newcommand{\p}{\partial}
\def\d{\mathrm{d}}
\newcommand{\ve}{\varepsilon}
\newcommand{\ds}{\displaystyle}
\newcommand{\f}{\frac}
\begin{document}

\title[Variable exponent fractional problems]{Two methods addressing variable-exponent fractional initial and boundary value problems and Abel integral equation}


\author[1]{\fnm{Xiangcheng} \sur{Zheng}}\email{xzheng@sdu.edu.cn}

\affil[1]{\orgdiv{School of Mathematics}, \orgname{Shandong University}, \orgaddress{\city{Jinan}, \postcode{250100}, \country{China}}}


\abstract{Variable-exponent fractional models attract increasing attentions in various applications, while the rigorous analysis is far from well developed. This work provides general tools to address these models. Specifically, we first develop a convolution method to study the well-posedness, regularity, an inverse problem and numerical approximation for the sundiffusion of variable exponent. For models such as the variable-exponent two-sided space-fractional boundary value problem (including the variable-exponent fractional Laplacian equation as a special case) and the distributed variable-exponent model, for which the convolution method does not apply, we develop a perturbation method to prove their well-posedness. The relation between the convolution method and the perturbation method is discussed, and we further apply the latter to prove the well-posedness of the variable-exponent Abel integral equation and discuss the constraint on the data under different initial values of variable exponent. }

\keywords{Variable exponent, Fractional differential equation, Integral equation, Mathematical analysis}


\pacs[MSC Classification]{35R11, 	45D05, 65M12}

\maketitle

\section{Introduction}
Variable-exponent fractional models attract increasing attentions in various fields. For instance, the variable-exponent subdiffusion equation has been widely used in the anomalous transient dispersion \cite{SunZha,SunChe}, while the variable-exponent two-sided space-fractional diffusion equation has been applied in, e.g., the turbulent channel flow \cite{SonKar} and the transport through a highly 
heterogeneous medium \cite{PanPer,ZenZhaKar}. More applications of variable-exponent fractional models can be found in a comprehensive review  \cite{SunCha}.

Nevertheless, the theoretical study of variable-exponent fractional initial and boundary value problems as well as the corresponding integral equations is far from well developed. For the subdiffusion model, a typical fractional initial value problem, there exist extensive investigations and significant progresses for the constant-exponent case \cite{Ban,Giga,Giga2,KiaLiu,Kop,Kub,LiZha,LiaTan,LinNak,Luc,ZacMA,Zac19}, while much fewer investigations for the variable exponent case can be found in the literature.  There exist some mathematical and numerical results for the case of space-variable-dependent variable exponent \cite{KiaSocYam,LeSty,Zha}. For the time-variable-dependent case, there are some recent works changing the exponent in the Laplace domain \cite{Cue,GarGiu} such that the Laplace transform of the variable-exponent fractional operator is available. For the case that the exponent changes in the time domain, the only available results focus on the piecewise-constant variable exponent case such that the solution representation is available on each temporal piece \cite{KiaSlo,Uma}. It is also commented in \cite{KiaSlo} that the case of smooth exponent remains an open problem. Recently, a local modification of subdiffusion by variable exponent $\alpha(t)$ is proposed in \cite{ZheLi}, where the techniques work only for the case $\alpha(0)=0$ such that this is mathematically a special case of the variable-exponent subdiffusion model. The general case $0<\alpha(0)<1$ remains untreated.

For fractional and nonlocal boundary value problems, there also exist sophisticated investigations for the constant-exponent case, see e.g. \cite{Aco,Del,Dubook,DuSR,ErvJDE,Erv,JinLaz}. For the variable-exponent case, there are some corresponding theoretical results such as heat kernel estimates for variable-exponent nonlocal operators \cite{Che} and well-posedness study for a nonlocal model involving the doubly-variable  fractional exponent and possibly truncated  interaction \cite{DelGlu}. Numerically, some efficient algorithms have been developed for the variable-exponent nonlocal and fractional Laplacian operators \cite{DelGlu,WuZha}. Furthermore, a solution landscape approach has been applied for nonlinear problems involving variable-exponent spectral fractional Laplacian \cite{YuZhe}. Nevertheless, rigorous analysis of variable-exponent fractional boundary value problems defined via fractional derivatives, e.g., the variable-exponent two-sided space-fractional diffusion equation in the form as \cite{Erv}, is not available in the literature.

For weakly singular integral equations such as the Abel integral equation, extensive results for the constant-exponent models can be found in the literature, see e.g. the books \cite{Bru,Gorbook}. For the case of variable exponent, there are rare studies. Recently, some works consider the mathematical and numerical analysis for the second-kind weakly singular Volterra intrgral equation of the variable exponent \cite{LiaSty,ZheWanSINUMa}. For the first-kind integral equations, \cite{ZheFCAA} develops an approximate inverse technique to convert the non-convolutional Abel intrgral equation of variable exponent to a second-kind weakly singular Volterra intrgral equation to facilitate the analysis. For the variable-exponent Abel intrgral equation of convolutional form, which is a more natural way to introduce the variable exponent, the corresponding study is not available.

The main difficulty for investigating variable-exponent fractional initial or boundary value problems as well as the first-kind variable-exponent Volterra integral equations is that the variable-exponent Abel kernel in these models could not be analytically treated by, e.g. the integral transform, and may not be positive definite or monotonic such that existing methods for partial differential equations do not apply directly. It is worth mentioning that in some variable-exponent fractional models considered in, e.g. \cite{ZheWanSINUMa,ZheWanSINUMb}, both integer-order and variable-exponent fractional terms exist at the same time in space or time such that the latter serve as low-order terms.  For this case, the complexity of the variable-exponent operators is weakened such that the analysis could be performed. For models considered in the current work, where the variable-exponent fractional terms serve as leading terms, the methods in \cite{ZheWanSINUMa,ZheWanSINUMb} do not apply.

In this work, we develop two methods, i.e. the {\it convolution method} and the {\it  perturbation method}, to address the aforementioned issues, which provide general tools for analyzing different kinds of variable-exponent nonlocal models, including variable-exponent initial and boundary value problems and the variable-exponent Abel integral equation. Specifically, the main contributions are enumerated as follows:
\begin{itemize}
\item[$\blacktriangleright$] \textbf{Convolution method for initial value problems}\\
  We develop a convolution method to treat the variable-exponent fractional initial value problems, the idea of which is to calculate the convolution of the original model and a suitable kernel to get a more feasible formulation such that the analysis could be considered.  The key of this method lies in introducing a ``nice'' kernel such that its convolution with the variable-exponent kernel leads to a so-called generalized identity function, which has favorable properties. Thus, the variable-exponent term could be transformed to a more feasible form without deteriorating the properties of the other terms in the model (as the introduced kernel is nice) such that the transformed model becomes tractable. 

We apply the convolution method to give a mathematical and numerical analysis for subdiffusion of variable exponent, a typical fractional initial value problem, including the following contents:
\begin{itemize}
\item[$\bullet$] We prove its well-posedness and regularity by means of resolvent estimates, and characterize the singularity of the solutions in terms of the initial value $\alpha(0)$ of the exponent, which indicates an intrinsic factor of variable-exponent models. 

\item[$\bullet$] The semi-discrete and fully-discrete numerical methods are developed and their error estimates are proved, without any regularity assumption on solutions or requiring specific properties of the variable-exponent Abel kernel such as positive definiteness or monotonicity. We again characterize the convergence order by $\alpha(0)$. 

\item[$\bullet$] Since the $\alpha(0)$ plays a critical role in characterizing properties of the model and the numerical method, we present a preliminary result for the inverse problem of determining $\alpha(0)$. 
\end{itemize}

\item[$\blacktriangleright$]  \textbf{Perturbation method for boundary value problems and integral equations}\\
 For problems such as variable-exponent two-sided space-fractional boundary value problems or distributed variable-exponent problems, the convolution method do not apply, cf. Section \ref{secc51} for detailed reasons. Thus, we develop an alternative method called the perturbation method to treat these models, the idea of which is to replace the variable-exponent kernel by a suitable ``nice'' kernel such that their difference is a low-order perturbation term. Note that different from the convolution method, the perturbation method treats the variable-exponent term locally such that all other terms in the model keep unchanged. 

Then we investigate and apply the perturbation method from the following aspects:
\begin{itemize}
\item[$\bullet$] We first show that for some models such as the variable-exponent subdiffusion, the convolution method and the perturbation method are indeed the same method in the sense that both methods could lead to the same transformed model.

\item[$\bullet$] We apply the  perturbation method to prove the well-posedness of variable-exponent two-sided space-fractional diffusion-advection-reaction equation (including the variable-exponent fractional Laplacian equation as a special case) by showing the coercivity and continuity of the corresponding bilinear form, which are difficult to prove from the original form of the model since the variable-exponent fractional operators are not positive definite or monotonic.

\item[$\bullet$] We apply the perturbation method to prove the well-posedness of the weak solution of a distributed variable-exponent model.

\item[$\bullet$] We further apply the perturbation method to analyze the well-posedness of the variable-exponent Abel integral equation for three cases: $0<\alpha(0)<1$, $\alpha(0)=0$ and $\alpha(0)=1$. In particular, we observe that for the last two cases, an additional constraint is required to ensure the well-posedness, and a further extension implies that the well-posedness of the variable-exponent subdiffusion with $\alpha(0)=1$ also requires an additional constraint on initial values of the data.
\end{itemize}

\end{itemize}

The rest of the work is organized as follows: In Section 2 we introduce notations and preliminary results. In Section 3 we introduce the convolution method and then apply it to prove the well-posedness and regularity of the subdiffusion of variable exponent, as well as an inverse problem of determining the $\alpha(0)$. In Section 4,  semi-discrete and fully-discrete schemes for subdiffusion of variable exponent are proposed and analyzed. In Section 5, we introduce the perturbation method and then apply it to analyze the variable-exponent two-sided space-fractional boundary value problem and the distributed variable-exponent model. In Section 6 we further apply the perturbation method to analyze the variable-exponent Abel integral equation.

\section{Preliminaries}\label{sec2}
\subsection{Notations}
Let $L^p(\om)$ with $1 \le p \le \infty$ be the Banach space of $p$th power Lebesgue integrable functions on $\om$. Denote the inner product of $L^2(\Omega)$ as $(\cdot,\cdot)$. For a positive integer $m$,
let  $ W^{m, p}(\Omega)$ be the Sobolev space of $L^p$ functions with $m$th weakly derivatives in $L^p(\om)$ (similarly defined with $\om$ replaced by an interval $\mathcal I$). Let  $H^m(\Omega) := W^{m,2}(\Omega)$ and $H^m_0(\Omega)$ be its subspace with the zero boundary condition up to order $m-1$. For $\frac{1}{2}<s\leq 1$, $H^s_0(\Omega)$ is defined via interpolation of $L^2(\Omega)$ and $H^1_0(\Omega)$ \cite{AdaFou}. 

Let $\{\lambda_i,\phi_i\}_{i=1}^\infty$ be eigen-pairs of the problem $-\Delta \phi_i = \lambda_i \phi_i$ with the zero boundary condition. We introduce the Sobolev space $\check{H}^s(\Omega)$ for $s\geq 0$ by
$ \check{H}^{s}(\Omega) := \big \{ q \in L^2(\Omega): \| q \|_{\check{H}^s}^2 : = \sum_{i=1}^{\infty} \lambda_i^{s} (q,\phi_i)^2 < \infty \big \},$
which is a subspace of $H^s(\Omega)$ satisfying $\check{H}^0(\Omega) = L^2(\Omega)$ and $\check{H}^2(\Omega) = H^2(\Omega) \cap H^1_0(\Omega)$ \cite{Tho}.
For a Banach space $\mathcal{X}$, let $W^{m, p}(0,T; \mathcal{X})$ be the space of functions in $W^{m, p}(0,T)$ with respect to $\|\cdot\|_{\mathcal {X}}$.  All spaces are equipped with standard norms \cite{AdaFou,Eva}.

Before introducing fractional derivative spaces, we present fractional operators. For $0<\mu<1$, define 
$$\beta_\mu (t):=\frac{t^{\mu-1}}{\Gamma(\mu)},$$
 the left-sided fractional integral operator
$$I_t^\mu q:=\beta_\mu*q=\int_0^t \beta_\mu(t-s)q(s)ds,$$
 the left-sided Riemann-Liouville fractional derivative operator $\p_t^\mu q:=\p_t (\beta_{1-\mu}*q)$ and the left-sided Caputo fractional derivative operator $^c\p_t^\mu q:=\beta_{1-\mu}*(\p_tq)$ \cite{Jinbook}. The right-sided operators are denoted as $\hat I_t^\mu$, $\hat \p_t^\mu$ and $^c\hat \p_t^\mu$, respectively, with standard definitions \cite{Jinbook}. On a finite interval $\Omega$, the following semigroup property and the adjoint property hold
\begin{align}\label{er0}
I_x^{\mu_1}I_x^{\mu_2}q=I_x^{\mu_1+\mu_2}q,~~\mu_1,\mu_2\geq 0;~~(I_x^\mu q,\bar q)=(q,\hat I_x^\mu\bar q),
\end{align}
 and it is proved in \cite{Erv} that the following equivalent formulas hold for $u,v\in H^\mu_0(\Omega)$ with $0<\mu<\frac{1}{2}$ or $\frac{1}{2}<\mu<1$ and for some positive constants $c_1$--$c_4$
\begin{align}
&c_1\|u\|^2_{H^\mu(\Omega)}\leq (-1)^{\sigma}(\p_x^\mu u,\hat \p_x^\mu u)\leq c_2 \|u\|^2_{H^\mu(\Omega)},\nonumber\\
&~~c_3\|\p_x^\mu u\|_{L^2(\Omega)}\leq \|u\|_{H^\mu(\Omega)}\leq c_4\|\p_x^\mu u\|_{L^2(\Omega)},\label{er}\\
&~~c_3\|\hat \p_x^\mu u\|_{L^2(\Omega)}\leq \|u\|_{H^\mu(\Omega)}\leq c_4\|\hat\p_x^\mu u\|_{L^2(\Omega)},\nonumber
\end{align}
where $\sigma=0$ for $0<\mu<\frac{1}{2}$ and $\sigma=1$ for 
$\frac{1}{2}<\mu<1$.
 
We use $Q$ to denote a generic positive constant that may assume different values at different occurrences. We use $\|\cdot\|$ to denote the $L^2$ norm of functions or operators in $L^2(\Omega)$, set $L^p(\mathcal X)$ for $L^p(0,T;\mathcal X)$ for brevity, and drop the notation $\om$ in the spaces and norms if no confusion occurs. For instance, $L^2(L^2)$ implies $L^2(0,T;L^2(\Omega))$. Furthermore, we will drop the space variable $\bm x$ in functions, e.g. we denote $q(\bm x,t)$ as $q(t)$, when no confusion occurs.

For $\theta\in(\pi/2,\pi)$ and $\delta > 0$, let $\Gamma_\theta$ be the contour in the complex plane defined by
$
\Gamma_\theta := \big \{z\in\C: |{\rm arg}(z)|=\theta, |z|\ge \delta \big \}
\cup \big \{z \in\C: |{\rm arg}(z)|\le \theta, |z|= \delta \big \}.
$
For any $q \in L^1_{loc}(\mathcal I)$, the Laplace transform $\mathcal L$ of its extension $\tilde q(t)$ to zero outside $\mathcal I$ and the corresponding inverse transform $\mathcal L^{-1}$ are denoted by
\begin{equation*}
\ds \mathcal{L}q(z):=\int_0^\infty \tilde q(t)e^{-tz}d t, \quad \mathcal{L}^{-1}(\mathcal Lq(z)):=\frac{1}{2\pi \rm i}\int_{\Gamma_\theta} e^{tz}\mathcal Lq(z)d z=q(t).
\end{equation*}
The following inequalities hold for  $0<\mu< 1$ and $Q=Q(\theta,\mu)$ \cite{Akr,Lub,ShiChe}
\begin{equation}\label{GammaEstimate}
\int_{\Gamma_\theta} |z|^{\mu-1} |e^{tz}|  \, |d z| \le Q t^{-\mu},
~~t\in (0,T];~~\|(z^\mu-\Delta)^{-1}\|\leq Q|z|^{-\mu},~~\forall z\in \Gamma_\theta,
\end{equation}
and we have $\mathcal L(\p_t^\mu q)=z^\mu \mathcal L q-(I_t^{1-\mu}q)(0) $ \cite{Jinbook}.

In Sections \ref{sec2}--\ref{sec4}, we consider smooth variable exponent $0<\alpha(t)<1$ on $t\in [0,T]$ or $t\in [0,1]$ such that $\alpha(t)$ is three times differentiable with $|\alpha'(t)|+|\alpha''(t)|+|\alpha'''(t)|\leq L$ for some $L>0$. In the last section, the special cases $\alpha(0)=0$ or $\alpha(0)=1$ (with $0<\alpha(t)<1$ on $t\in (0,T]$) will be considered. Furthermore, we denote $\alpha_0=\alpha(0)$ and  $\alpha_1=\alpha(1)$ for simplicity.

\subsection{A generalized identity function}
 We propose the concept of the generalized identity function. For the variable-exponent Abel kernel
 $$k(t):=\frac{t^{-\alpha(t)}}{\Gamma(1-\alpha(t))}, $$
 direct calculations show that 
\begin{align}
(\beta_{\alpha_0}*k)(t)=&\int_0^t\frac{(t-s)^{\alpha_0-1}}{\Gamma(\alpha_0)}\frac{s^{-\alpha(s)}}{\Gamma(1-\alpha(s))}ds\nonumber\\
&\overset{z=s/t}{=}\int_0^1\frac{(t-tz)^{\alpha_0-1}}{\Gamma(\alpha_0)}\frac{(tz)^{-\alpha(tz)}}{\Gamma(1-\alpha(tz))}tdz\label{mh7}\\
&=\int_0^1\frac{(tz)^{\alpha_0-\alpha(tz)}}{\Gamma(\alpha_0)\Gamma(1-\alpha(tz))}(1-z)^{\alpha_0-1}z^{-\alpha_0}dz=:g(t).\nonumber
\end{align}
It is clear that if $\alpha(t)\equiv \alpha$ for some constant $0<\alpha<1$, then 
$$
g(t)=\int_0^1\frac{1}{\Gamma(\alpha)\Gamma(1-\alpha)}(1-z)^{\alpha-1}z^{-\alpha}dz=1. 
$$
For the variable exponent $\alpha(t)$, $g(t)\not\equiv 1$ in general. However, as 
$$\lim_{t\rightarrow 0^+} |(\alpha_0-\alpha(tz))\ln tz|\leq \lim_{t\rightarrow 0^+}Ltz|\ln tz|=0,$$
 we have
$\lim_{t\rightarrow 0^+}(tz)^{\alpha_0-\alpha(tz)}=\lim_{t\rightarrow 0^+}e^{(\alpha_0-\alpha(tz))\ln tz}=1, $
which implies
\begin{align}\label{g0}
g(0)= \int_0^1\frac{1}{\Gamma(\alpha_0)\Gamma(1-\alpha_0)}(1-z)^{\alpha_0-1}z^{-\alpha_0}dz=1.
\end{align}
Based on this property, we call $g(t)$ a {\it generalized identity function}, which will play a key role in reformulating the model (\ref{VtFDEs}) to more feasible forms for mathematical and numerical analysis.

It is clear that $g$ is bounded over $[0,T]$. To bound derivatives of $g$, we use $(tz)^{\alpha_0-\alpha(tz)}\leq Q$ to obtain
 \begin{align*}
 \Big|\p_t(tz)^{\alpha_0-\alpha(tz)}\Big|&=\Big|(tz)^{\alpha_0-\alpha(tz)}z\Big(-\alpha'(tz)\ln(tz)+\frac{\alpha_0-\alpha(tz)}{tz}\Big) \Big|\\
 &\leq Qz\big(|\ln(tz)|+1\big),\\
 \Big|\p_t^2(tz)^{\alpha_0-\alpha(tz)}\Big|&=\Big|\big[\p_t(tz)^{\alpha_0-\alpha(tz)}\big]z\Big(-\alpha'(tz)\ln(tz)+\frac{\alpha_0-\alpha(tz)}{tz}\Big) \\
 &\quad+(tz)^{\alpha_0-\alpha(tz)}z^2\Big(-\alpha''(tz)\ln(tz)-2\frac{\alpha'(tz)}{tz}-\frac{\alpha_0-\alpha(tz)}{(tz)^2}\Big) \Big|\\
 &\leq Qz^2\big(|\ln(tz)|+1\big)^2+z^2(|\ln(tz)|+\frac{1}{tz})\leq Q\frac{z^2}{tz}=Q\frac{z}{t},\\
 \Big|\p_t^3(tz)^{\alpha_0-\alpha(tz)}\Big|&\leq \frac{Q}{t^2} ~(\text{We omit the calculations here}).
 \end{align*}
We apply them to bound $g'(t)$, $g''(t)$ and $g'''(t)$ as
\begin{align}
|g'|&=\Big|\int_0^1\p_t\Big(\frac{(tz)^{\alpha_0-\alpha(tz)}}{\Gamma(\alpha_0)\Gamma(1-\alpha(tz))}\Big)(1-z)^{\alpha_0-1}z^{-\alpha_0}dz\Big| \nonumber\\
&\leq Q\int_0^1z\big(|\ln(tz)|+1\big)(1-z)^{\alpha_0-1}z^{-\alpha_0}dz\nonumber\\
&\hspace{-0.15in}\leq Q\big(|\ln t|+1\big)\int_0^1z\frac{|\ln t|+|\ln z|+1}{|\ln t|+1}(1-z)^{\alpha_0-1}z^{-\alpha_0}dz\leq Q\big(|\ln t|+1\big),\label{g'}\\
|g''|&=\Big|\int_0^1\p_t^2\Big(\frac{(tz)^{\alpha_0-\alpha(tz)}}{\Gamma(\alpha_0)\Gamma(1-\alpha(tz))}\Big)(1-z)^{\alpha_0-1}z^{-\alpha_0}dz\Big| \leq \frac{Q}{t},\label{g''}\\
|g'''|&=\Big|\int_0^1\p_t^3\Big(\frac{(tz)^{\alpha_0-\alpha(tz)}}{\Gamma(\alpha_0)\Gamma(1-\alpha(tz))}\Big)(1-z)^{\alpha_0-1}z^{-\alpha_0}dz\Big| \leq \frac{Q}{t^2}\label{g'''}.
\end{align}

\section{Convolution method}\label{seccm}

We develop the convolution method to analyze the fractional initial value problems of variable exponent, the main idea of which is to calculate the convolution of the original model and a suitable kernel to get a more feasible formulation. We illustrate this idea by the following subdiffusion model  with variable exponent, which is a typical fractional parabolic equation
\begin{equation}\label{VtFDEs}\begin{array}{c}
^c\p_t^{\alpha(t)} u(\bm x,t)-  \Delta u (\bm x,t)= f(\bm x,t) ,~~(\bm x,t) \in \Omega\times(0,T],
\end{array}\end{equation}
equipped with initial and boundary conditions
\begin{equation}\label{ibc}
 u(\bm x,0)=u_0(\bm x),~\bm x\in \Omega; \quad u(\bm x,t) = 0,~(\bm x,t) \in \p \Omega\times[0,T]. \end{equation}
Here $\Omega \subset \mathbb{R}^d$  is a simply-connected bounded domain with the piecewise smooth boundary $\p \om$ with convex corners, $\bm x := (x_1,\cdots,x_d)$ with $1 \le d \le 3$ denotes the spatial variables, $f$ and $u_0$ refer to the source term and the initial value, respectively, and the fractional derivative of order $0<\alpha(t)<1$ is defined via the variable-exponent Abel kernel $k$ \cite{LorHar}
\begin{align*}
^c\p_t^{\alpha(t)}u :=k*\p_t u .
\end{align*}

It is worth mentioning that although we focus the attention on the fractional parabolic equation (i.e. $0<\alpha(t)<1$), the proposed method also works for the fractional hyperbolic equation (i.e. $1<\alpha(t)<2$) based on resolvent estimates proposed in, e.g. \cite{ZhaZho}. The latter is also known as the fractional diffusion-wave equation, for which the $\p_t^{\alpha(t)}u$ is defined as
\begin{align*}
^c\p_t^{\alpha(t)}u :=\tilde k*\p_t^2 u,~~ \tilde k(t):=\frac{t^{1-\alpha(t)}}{\Gamma(2-\alpha(t))}.
\end{align*}

\subsection{A transformed model}
To demonstrate the idea of the convolution method, we calculate the convolution of (\ref{VtFDEs}) and $\beta_{\alpha_0}$ as
$$\beta_{\alpha_0}*[(k*\p_t u)-\Delta u - f]=0. $$
We invoke (\ref{mh7}) to obtain
\begin{align}\label{modeln}
g*\p_t u-\beta_{\alpha_0}*  \Delta u  -\beta_{\alpha_0}*f=0. 
\end{align}
We finally use (\ref{g0}) to formally differentiate it to get the transformed model
\begin{equation}\label{Model2}
\p_tu+g'*\p_t u-\p_t^{1-\alpha_0}  \Delta u=\p_t^{1-\alpha_0}f,
\end{equation}
equipped with the initial and boundary conditions (\ref{ibc}).

We then take the Laplace transform of (\ref{Model2}) to obtain
$$z\mathcal L u-u_0-z^{1-\alpha_0}\Delta\mathcal Lu=\mathcal L p_u,~~p_u(t):=\p_t^{1-\alpha_0}f-g'*\p_t u, $$
that is,
$\mathcal Lu=z^{\alpha_0-1}(z^{\alpha_0}-\Delta)^{-1}(u_0+\mathcal Lp_u). $
Take the inverse Laplace transform to get
\begin{align}\label{repu}
u=F(t)u_0+\int_0^tF(t-s)p_u(s)ds
\end{align}
where the operator $F(\cdot):L^2(\Omega)\rightarrow L^2(\Omega)$ is defined as
$$F(t)q:=\frac{1}{2\pi \rm i}\int_{\Gamma_\theta} e^{tz}z^{\alpha_0-1}(z^{\alpha_0}-\Delta)^{-1}qd z,~~\forall q\in L^2(\Omega), $$
with the following properties \cite{Jinbook}
\begin{align}\label{propF}
\|F(t)q\|_{\check H^{\mu}}\leq Qt^{\frac{\gamma-\mu}{2}\alpha_0}\|q\|_{\check H^\gamma}\text{ for }\mu,\gamma\in \mathbb R\text{ and }\gamma\leq \mu\leq \gamma+2.
\end{align}
\subsection{Well-posedness of transformed model}
We show the well-posedness of model (\ref{Model2}) with the initial and boundary conditions (\ref{ibc}) in the following theorem.
\begin{theorem}\label{thm:aux}
Suppose $f\in W^{1,1}(L^2)$ and $u_0\in \check H^2$, then model (\ref{Model2})--(\ref{ibc}) admits a unique solution in $W^{1,p}(L^2)\cap L^p(\check H^2)$ for $1< p<\frac{1}{1-\alpha_0}$ and
\begin{align*}
&\|u\|_{W^{1,p}(L^2)}+\| u\|_{L^p(\check H^2)}\leq Q\big(\|f\|_{W^{1,1}(L^2)}+\|u_0\|_{\check H^2}\big).
\end{align*}
\end{theorem}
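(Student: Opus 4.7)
The plan is to apply the Banach fixed-point theorem to the Volterra integral formulation (\ref{repu}), viewing its right-hand side as an operator $\mathcal T$ on $X_{T_0}:=W^{1,p}(0,T_0;L^2)\cap L^p(0,T_0;\check H^2)$ for a sufficiently small $T_0\in(0,T]$, and then concatenating solutions on successive subintervals to cover $[0,T]$. The key preliminary ingredient is a refined smoothing estimate for $F(t)$ beyond what (\ref{propF}) gives directly: differentiating the contour representation of $F(t)$ and using the identity $z^{\alpha_0}(z^{\alpha_0}-\Delta)^{-1}=I+\Delta(z^{\alpha_0}-\Delta)^{-1}$ together with (\ref{GammaEstimate}), one obtains $\|\p_tF(t)u_0\|\leq Qt^{\alpha_0-1}\|u_0\|_{\check H^2}$. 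The factor $t^{\alpha_0-1}$ belongs to $L^p(0,T)$ precisely when $p(1-\alpha_0)<1$, which pins down the exponent range in the theorem.

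First I would bound the homogeneous contribution $F(t)u_0$ in $X_{T_0}$: the $L^p(\check H^2)$ part is immediate from (\ref{propF}) with $\gamma=\mu=2$, and the $W^{1,p}(L^2)$ part follows from the $t^{\alpha_0-1}$ bound above. Second, I would estimate the data-driven contribution $\int_0^tF(t-s)\p_s^{1-\alpha_0}f(s)\,ds$ by rewriting $\p_s^{1-\alpha_0}f=\p_s(\beta_{\alpha_0}*f)$, integrating by parts to avoid differentiating $f$ directly, and then combining $f\in W^{1,1}(L^2)$ with the smoothing $\|F(t)\|_{\mathcal L(L^2,\check H^2)}\leq Qt^{-\alpha_0}$ from (\ref{propF}) and Young's convolution inequality in $L^p$ to control this term by $Q\|f\|_{W^{1,1}(L^2)}$ in $X_{T_0}$.

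Third, I would close the contraction on the variable-exponent correction $\int_0^tF(t-s)(g'*\p_\tau v)(s)\,ds$. The crucial observation is that (\ref{g'}) yields $\|g'\|_{L^1(0,T_0)}\leq QT_0(1+|\ln T_0|)\to 0$ as $T_0\to 0^+$; combined with the smoothing estimates above and Young's inequality applied to the iterated convolution $F*g'*\p_\tau(v_1-v_2)$, this gives
$$\|\mathcal Tv_1-\mathcal Tv_2\|_{X_{T_0}}\leq Q\,\|g'\|_{L^1(0,T_0)}\,\|v_1-v_2\|_{X_{T_0}},$$
so $\mathcal T$ is a strict contraction on $X_{T_0}$ for $T_0$ small. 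A standard concatenation on intervals of length $T_0$ then produces the unique solution on $[0,T]$, and summation of the local bounds yields the claimed a priori estimate.

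The main obstacle will be the $W^{1,p}(L^2)$ piece of the contraction estimate. Differentiating the convolution term naively produces $\p_tF(t-s)$ acting on the $L^2$-valued function $g'*\p_\tau w$, for which only the bound $\|\p_tF(t)\|_{\mathcal L(L^2)}\sim t^{-1}$ is available, and this is not $L^1$-integrable in time. I would work around this by the identity $g'*\p_\tau w=\p_t(g*\p_\tau w)-\p_tw$, which is valid because $g(0)=1$ from (\ref{g0}); this allows one temporal derivative to be transferred from $F$ onto $g$ through integration by parts in the Volterra convolution, so that only the $L^1$-integrable $\mathcal L(L^2,\check H^2)$-smoothing of $F$ and the smallness of $\|g\|_{L^1(0,T_0)}$ and $\|g'\|_{L^1(0,T_0)}$ need to be invoked.
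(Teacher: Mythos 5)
Your overall framework (fixed point on the Volterra form (\ref{repu}), with the homogeneous part controlled by $\|\p_tF(t)u_0\|=\|E(t)\Delta u_0\|\le Qt^{\alpha_0-1}\|u_0\|_{\check H^2}$ and the data term by $\p_t(F*\p_t^{1-\alpha_0}f)=E(t)f(0)+E*\p_tf$) is sound and close in spirit to the paper, which runs the same contraction globally with exponential weights $e^{-\sigma t}$ instead of your local-in-time argument plus concatenation. But there is a genuine gap exactly at the point you flag as ``the main obstacle,'' and your proposed workaround does not close it. The identity $g'*\p_\tau w=\p_t(g*\p_\tau w)-\p_tw$ (valid since $g(0)=1$) is correct, but after you integrate by parts in the Volterra convolution you are simply led back to where you started: since $(g*\p_\tau w)(0)=0$ and $w(0)=0$, one computes
\begin{align*}
F*\p_t(g*\p_\tau w)-F*\p_tw=\p_t\big[(F*g)*\p_\tau w\big]-\p_t(F*w)=(F+F*g')*\p_\tau w-F*\p_\tau w=(F*g')*\p_\tau w,
\end{align*}
which is the term you began with. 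Taking one more time derivative to land in $W^{1,p}(L^2)$ forces you to differentiate $F*g'$, and both Leibniz splittings fail under the tools you allow yourself: $(F*g')'=g'(t)I+F'*g'$ involves $F'*g'$ with $\|F'(t)\|_{\mathcal L(L^2)}\sim t^{-1}$ convolved against a merely logarithmically bounded $g'$ (a naive bound diverges), while $(F*g')'=F(t)g'(0)+F*g''$ involves $g'(0)=\infty$ and $|g''|\sim t^{-1}\notin L^1$. The $\mathcal L(L^2,\check H^2)$-smoothing of $F$ does not help either, because under the iteration norm $\p_\tau w$ is only in $L^p(L^2)$, not $L^p(\check H^2)$. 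Smallness of $\|g\|_{L^1(0,T_0)}$ and $\|g'\|_{L^1(0,T_0)}$ alone is therefore not enough.

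What actually rescues this term is a quantitative gain of an $\varepsilon$ of time-regularity on $g'*\p_tv$, traded against an $\varepsilon$ of the singularity of $F'$. The paper writes $\int_0^tF'(t-s)p_v(s)\,ds=\int_0^t\mathcal R(t-s)\,\p_s^\varepsilon p_v(s)\,ds$ as in (\ref{Well:e3}), where $\|\mathcal R(t)\|\le Qt^{\varepsilon-1}$ is now integrable by (\ref{Well:e4}), and then proves $\p_t^\varepsilon(g'*\p_tv)=\big(\p_t(\beta_{1-\varepsilon}*g')\big)*\p_tv$ with $|\p_t(\beta_{1-\varepsilon}*g')|\le Qt^{-2\varepsilon}$ (see (\ref{mh2}) and (\ref{mh4})); this last bound requires the second-derivative estimate $|g''|\le Q/t$ from (\ref{g''}), which your argument never invokes. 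So the missing ingredient is not another algebraic rearrangement of the convolution but a genuine extra smoothness estimate on the perturbation kernel (or, alternatively, a uniform bound on $\int_0^T\|F'(\tau)P_j\|\,d\tau$ over spectral projections, which the paper does not use). I would either adopt the paper's $\varepsilon$-shifting device or supply and prove such a replacement estimate before the contraction can be closed in $W^{1,p}(L^2)$.
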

\begin{proof}
By $C([0,T];L^2)\subset W^{1,1}(L^2)$ \cite{AdaFou}, we have the following relations for $0<\varepsilon\ll \frac{1}{p}-(1-\alpha_0)$
\begin{align}\label{imh}
I_t^{1-\varepsilon}\p_t^{1-\alpha_0}f=I_t^{\alpha_0-\varepsilon}f,~~\p_t^{\varepsilon}\p_t^{1-\alpha_0}f=\beta_{\alpha_0-\varepsilon}f(0)+\beta_{\alpha_0-\varepsilon}*\p_tf, 
\end{align}
which means that 
\begin{align}\label{regf}
(I_t^{1-\varepsilon}\p_t^{1-\alpha_0}f)|_{t=0}=0,~~\|\p_t^{\varepsilon}\p_t^{1-\alpha_0}f\|_{L^p(L^2)}\leq Q\|f\|_{W^{1,1}(L^2)}. 
\end{align}
Thus we take $0<\varepsilon\ll \frac{1}{p}-(1-\alpha_0)$ throughout this proof.

We first consider the case of zero initial condition, i.e. $u_0\equiv 0$, and define the space $\tilde W^{1,p}(L^2):=\{q\in W^{1,p}(L^2):q(0)=0\}$ equipped with the norm $\|q\|_{\tilde W^{1,p}(L^2)}:=\|e^{-\sigma t}\p_tq\|_{L^p(L^2)}$ for some $\sigma\geq 1$, which is equivalent to the standard norm $\|q\|_{W^{1,p}(L^2)}$ for $q\in \tilde W^{1,p}(L^2)$. Define a mapping $\mathcal M:\tilde W^{1,p}(L^2)\rightarrow \tilde W^{1,p}(L^2)$ by $w=\mathcal M v$ where $w$ satisfies
\begin{equation}\label{Modelw}
\p_tw-\p_t^{1-\alpha_0}  \Delta w=p_v,~~(\bm x,t)\in\Omega\times (0,T],
\end{equation}
equipped with zero initial and boundary conditions. By (\ref{repu}), $w$ could be expressed as
\begin{equation}\label{w}
w=\int_0^tF(t-s)p_v(s)ds.
\end{equation} 
To show the well-posedness of $\mathcal M$, we differentiate (\ref{w}) to obtain
\begin{equation}\label{mh9}
\p_tw=p_v(t)+\int_0^tF'(t-s)p_v(s)ds. 
\end{equation}
 We use the Laplace transform to evaluate the second right-hand side term to get 
\begin{equation}\label{Well:e2}
\ds \hspace{-0.1in} \mathcal L \bigg [ \int_0^{t}  F'(t-s) p_v(s)  d s\bigg ] = z^{\alpha_0}(z^{\alpha_0}-\Delta)^{-1}\mathcal Lp_v
= \big(z^{\alpha_0-\varepsilon}(z^{\alpha_0}-\Delta)^{-1}\big)\big(z^\varepsilon \mathcal Lp_v\big).
\end{equation}
We apply $(I_t^{1-\varepsilon}\p_t^{1-\alpha_0}f)(0)=0$ to take the inverse Laplace transform of \eqref{Well:e2}  to get 
\begin{equation}\label{Well:e3}
\ds \int_0^{t}  F'(t-s) p_v(s)  d s  = \int_0^t \mathcal R(t-s) \partial_s^\varepsilon p_v(s)d s,
\end{equation}
where
$\mathcal R(t):=\frac{1}{2\pi \rm i}\int_{\Gamma_\theta}
z^{\alpha_0-\varepsilon} (z^{\alpha_0}-\Delta)^{-1} e^{zt} \d z.$
We use (\ref{GammaEstimate}) to bound $\mathcal R$ by
\begin{align}
\|\mathcal R(t-s)\| &\le Q\int_{\Gamma_\theta}
|z|^{\alpha_0-\varepsilon} \|(z^{\alpha_0}-\Delta)^{-1}\| |e^{z(t-s)}| |d z|\nonumber\\
&\leq Q\int_{\Gamma_\theta}
|z|^{\alpha_0-\varepsilon} |z|^{-\alpha_0} |e^{z(t-s)}| |d z|\leq  \f{Q}{(t-s)^{1-\varepsilon}}.\label{Well:e4}
\end{align}
To estimate $\partial_t^\varepsilon p_v$, recall that $p_v(t)=\p_t^{1-\alpha_0}f-g'*\p_t v$, which implies
\begin{equation}\label{zxc1}
\partial_t^\varepsilon p_v(t)=\p_t^{\varepsilon}\p_t^{1-\alpha_0}f-\partial_t^\varepsilon(g'*\p_t v). 
\end{equation}
We reformulate  $\partial_t^\varepsilon(g'*\p_t v)$ as
\begin{align}
\partial_t^\varepsilon(g'*\p_t v)=\partial_t\big(\beta_{1-\varepsilon}*(g'*\p_t v)\big)=\partial_t\big((\beta_{1-\varepsilon}*g')*\p_t v\big), \label{mh1}
\end{align}
 where 
\begin{align}\label{mh3}
\beta_{1-\varepsilon}*g'=\int_0^t\frac{(t-s)^{-\varepsilon}}{\Gamma(1-\varepsilon)}g'(s)ds=t^{1-\varepsilon}\int_0^1\frac{(1-z)^{-\varepsilon}}{\Gamma(1-\varepsilon)}g'(tz)dz.
\end{align} 
 By (\ref{g'}), we have
\begin{align}
|\beta_{1-\varepsilon}*g'|&\leq Qt^{1-\varepsilon}\int_0^1\frac{(1-z)^{-\varepsilon}}{\Gamma(1-\varepsilon)}(1+|\ln(tz)|)dz\nonumber\\
&\leq Qt^{1-\varepsilon}\int_0^1(1-z)^{-\varepsilon}\frac{(tz)^{\varepsilon}(1+|\ln(tz)|)}{(tz)^{\varepsilon}}dz \leq Qt^{1-2\varepsilon},\label{mh5}
\end{align}
 which means that $(\beta_{1-\varepsilon}*g')(0)=0$. We apply this to further rewrite (\ref{mh1}) as
 \begin{align}
\partial_t^\varepsilon(g'*\p_t v)=\big(\partial_t(\beta_{1-\varepsilon}*g')\big)*\p_t v. \label{mh2}
\end{align}
 Based on (\ref{mh3}) we apply (\ref{g''}) and a similar estimate as (\ref{mh5}) to obtain
 \begin{align}
|\p_t(\beta_{1-\varepsilon}*g')|&=\Big|(1-\varepsilon)t^{-\varepsilon}\int_0^1\frac{(1-z)^{-\varepsilon}}{\Gamma(1-\varepsilon)}g'(tz)dz\nonumber\\
&\qquad+t^{1-\varepsilon}\int_0^1\frac{(1-z)^{-\varepsilon}}{\Gamma(1-\varepsilon)}g''(tz)zdz\Big|\leq Qt^{-2\varepsilon}.\label{mh4}
\end{align} 
 Thus, we finally bound $\partial_t^\varepsilon(g'*\p_t v)$ as
 \begin{align}\label{mh6}
| \partial_t^\varepsilon(g'*\p_t v)|\leq Qt^{-2\varepsilon}*|\p_t v|.
 \end{align}
 We invoke (\ref{regf}) and (\ref{mh6}) to conclude that
 \begin{align}
 \|e^{-\sigma t}\partial_t^\varepsilon p_v(t)\|_{L^p(L^2)}&\leq \|\p_t^{\varepsilon}\p_t^{1-\alpha_0}f\|_{L^p(L^2)}+Q\|e^{-\sigma t}(t^{-2\varepsilon}*|\p_t v|)\|_{L^p(L^2)}\nonumber\\
 &\leq \|f\|_{W^{1,1}(L^2)}+Q\|(e^{-\sigma t}t^{-2\varepsilon})*(e^{-\sigma t}\|\p_t v\|)\|_{L^p(0,T)}\nonumber\\
 &\leq \|f\|_{W^{1,1}(L^2)}+Q\|e^{-\sigma t}t^{-2\varepsilon}\|_{L^1(0,T)}\|e^{-\sigma t}\|\p_t v\|\|_{L^p(0,T)}\nonumber\\
 &\leq \|f\|_{W^{1,1}(L^2)}+Q\sigma^{2\varepsilon-1}\|v\|_{\tilde W^{1,p}(0,T;L^2)},\label{mh8}
 \end{align}
 where we used the fact that 
 \begin{align}\label{zxc8}
 \|e^{-\sigma t}t^{-2\varepsilon}\|_{L^1(0,T)}=\int_0^Te^{-\sigma t}t^{-2\varepsilon}dt\leq \sigma^{2\varepsilon-1}\int_0^\infty e^{- t}t^{-2\varepsilon}dt\leq Q\sigma^{2\varepsilon-1}. 
 \end{align}
 Consequently, we apply (\ref{Well:e4}), (\ref{mh8}) and the Young's convolution inequality in (\ref{Well:e3}) to bound the second right-hand side term of (\ref{mh9})
 \begin{align*}
& \Big\|e^{-\sigma t}\int_0^{t}  F'(t-s) p_v(s)  d s \Big\|_{L^p(L^2)}\leq Q\|t^{\varepsilon-1}*\|e^{-\sigma t}\partial_t^\varepsilon p_v\|\|_{L^p(0,T)}\\
 &~~~~\leq Q\|e^{-\sigma t}\partial_t^\varepsilon p_v(t)\|_{L^p(L^2)}\leq Q\|f\|_{W^{1,1}(L^2)}+Q\sigma^{2\varepsilon-1}\|v\|_{\tilde W^{1,p}(L^2)}.
 \end{align*}
  The first right-hand side term of (\ref{mh9}) could be bounded by similar and much simper estimates as above
  \begin{align*}
  \|e^{-\sigma t}p_v\|_{L^p(L^2)}&\leq \|\p_t^{1-\alpha_0}f\|_{L^p(L^2)}+\|(e^{-\sigma t}g')*(e^{-\sigma t}\p_t v)\|_{L^p(L^2)} \\
  &\leq  \|\p_t^{1-\alpha_0}f\|_{L^p(L^2)}+Q\sigma^{\varepsilon-1}\|v\|_{\tilde W^{1,p}(L^2)}.
  \end{align*}
  In particular, we could apply $(I_t^{1-\varepsilon}\p_t^{1-\alpha_0}f)(0)=0$ and the Young's convolution inequality to further bound $\p_t^{1-\alpha_0}f$ as
  \begin{align}
   \|\p_t^{1-\alpha_0}f\|_{L^p(L^2)}&= \|\p_tI_t^{\varepsilon}I_t^{1-\varepsilon}\p_t^{1-\alpha_0}f\|_{L^p(L^2)}=\|I_t^{\varepsilon}\p_tI_t^{1-\varepsilon}\p_t^{1-\alpha_0}f\|_{L^p(L^2)}\nonumber\\
   &\hspace{-0.4in}\leq Q\|I_t^{\varepsilon}\|\p_t^{\varepsilon}\p_t^{1-\alpha_0}f\|\|_{L^p(0,T)}\leq Q\|\p_t^{\varepsilon}\p_t^{1-\alpha_0}f\|_{L^p(L^2)}\leq Q\|f\|_{W^{1,1}(L^2)}.\label{fnorm}
   \end{align}
 We summarize the above two estimates to conclude that 
 \begin{equation}\label{mh10}
 \|w\|_{\tilde W^{1,p}(L^2)}\leq Q\big(\|f\|_{W^{1,1}(L^2)}+\sigma^{2\varepsilon-1}\|v\|_{\tilde W^{1,p}(L^2)}\big),
 \end{equation}
 which implies that $w\in \tilde W^{1,p}(L^2)$ such that $\mathcal M$ is well-posed. 
 
 To show the contractivity of $\mathcal M$, let $w_1=\mathcal M v_1$ and $w_2=\mathcal M v_2$ such that $w_1-w_2$ satisfies (\ref{Modelw}) with $v=v_1-v_2$ and $f\equiv 0$. Thus (\ref{mh10}) implies $\|w_1-w_2\|_{\tilde W^{1,p}(L^2)}\leq Q\sigma^{2\varepsilon-1}\|v_1-v_2\|_{\tilde W^{1,p}(L^2)}$. Choose $\sigma$ large enough such that $Q\sigma^{2\varepsilon-1}<1$, that is, $\mathcal M$ is a contraction mapping such that there exists a unique solution $u\in \tilde W^{1,p}(L^2)$ for model (\ref{Model2}) with zero initial and boundary conditions, and the stability estimate could be derived directly from (\ref{mh10}) with $v=w=u$ and large $\sigma$ and the equivalence between two norms $\|\cdot\|_{\tilde W^{1,p}(L^2)}$ and $\|\cdot\|_{W^{1,p}(L^2)}$ for $u\in \tilde W^{1,p}(L^2)$
  \begin{equation}\label{mh11}
 \|u\|_{ W^{1,p}(L^2)}\leq Q\|f\|_{W^{1,1}(L^2)}.
 \end{equation}
 
For the case of non-zero initial condition, a variable substitution $v=u-u_0$ could be used to reach the equation (\ref{Model2}) with $u$ and $f$ replaced by $v$ and $f+\Delta u_0$, respectively, equipped with zero initial and boundary conditions. As $u_0\in H^2$, we apply the well-posedness of (\ref{Model2}) with zero initial and boundary conditions to find that there exists a unique solution $v\in \tilde W^{1,p}(L^2)$ with the stability estimate $ \|v\|_{ W^{1,p}(L^2)}\leq Q\|f+\Delta u_0\|_{W^{1,1}(L^2)}$ derived from (\ref{mh11}).
 By $v=u-u_0$, we finally conclude that $u=v+u_0$ is a solution to (\ref{Model2})--(\ref{ibc}) in $W^{1,p}(L^2)$ with the estimate
 \begin{equation}\label{mh13}
 \|u\|_{ W^{1,p}(L^2)}\leq Q\|f\|_{W^{1,1}(L^2)}+Q\|u_0\|_{H^2}.
 \end{equation}
 The uniqueness of the solutions to (\ref{Model2})--(\ref{ibc}) follows from that to this model with $u_0\equiv 0$. To  estimate the $L^p(H^2)$ norm of $u$, we apply $\Delta$ on both sides of (\ref{repu}) and employ (\ref{propF}) to obtain
 \begin{align}
\|\Delta u\|&\leq \|\Delta F(t)u_0\|+\|\int_0^t\Delta F(t-s)p_u(s)ds\|\nonumber\\
&\leq Q\|F(t)u_0\|_{\check H^2}+Qt^{-\alpha_0}*\|p_u\|\leq Q\|u_0\|_{\check H^2}+Qt^{-\alpha_0}*\|p_u\|.\label{mh14}
\end{align}
We combine this with the Young's convolution inequality, (\ref{fnorm}), (\ref{mh13}) and the estimate of $p_u$
\begin{align}\label{estpu}
\|p_u\|_{L^p(L^2)}&=\|\p_t^{1-\alpha_0}f-g'*\p_t u\|_{L^p(L^2)}\\
&\leq Q\|f\|_{W^{1,1}(L^2)}+Q\|g'\|_{L^1(0,T)}\|\p_tu\|_{L^p(L^2)}\leq Q\|u_0\|_{\check H^2}+Q\|f\|_{W^{1,1}(L^2)}\nonumber
\end{align} 
to get
 \begin{align*}
\|\Delta u\|_{L^p(L^2)}&\leq  Q\|u_0\|_{\check H^2}+Q\|t^{-\alpha_0}*\|p_u\|\|_{L^p(0,T)}\\
&\leq Q\|u_0\|_{\check H^2}+Q\|p_u\|_{L^p(L^2)}\leq  Q\|u_0\|_{\check H^2}+Q\|f\|_{W^{1,1}(L^2)},
\end{align*}
which completes the proof.
\end{proof}

\subsection{Well-posedness of original model}
The well-posedness of the original model (\ref{VtFDEs})--(\ref{ibc}) is proved in the following theorem.
\begin{theorem}\label{thm:well}
Suppose $f\in W^{1,1}(L^2)$ and $u_0\in \check H^2$, then model (\ref{VtFDEs})--(\ref{ibc}) admits a unique solution in $W^{1,p}(L^2)\cap L^p(\check H^2)$ for $1< p<\frac{1}{1-\alpha_0}$ and
\begin{align*}
&\|u\|_{W^{1,p}(L^2)}+\| u\|_{L^p(\check H^2)}\leq Q\big(\|f\|_{W^{1,1}(L^2)}+\|u_0\|_{\check H^2}\big).
\end{align*}
\end{theorem}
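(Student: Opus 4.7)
The plan is to derive Theorem~\ref{thm:well} from Theorem~\ref{thm:aux} by establishing that the original model (\ref{VtFDEs})--(\ref{ibc}) and the transformed model (\ref{Model2})--(\ref{ibc}) are equivalent within the class $W^{1,p}(L^2)\cap L^p(\check H^2)$. Once such equivalence is in hand, existence, uniqueness, and the stability estimate all transfer verbatim from Theorem~\ref{thm:aux}.

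For existence, I would take the unique solution $u\in W^{1,p}(L^2)\cap L^p(\check H^2)$ of (\ref{Model2})--(\ref{ibc}) produced by Theorem~\ref{thm:aux}. Integrating (\ref{Model2}) in time over $[0,t]$ and exploiting $g(0)=1$ from (\ref{g0}), I would recognize $\p_t u + g'*\p_t u = \p_t(g*\p_t u)$ and use $(g*\p_t u)(0)=0$ (which follows from $\p_t u\in L^p(L^2)$ and the local integrability of $g$) to convert the left-hand side back to $g*\p_t u$. On the right-hand side, $I_t^1\p_t^{1-\alpha_0}v=\beta_{\alpha_0}*v$ whenever $(\beta_{\alpha_0}*v)(0)=0$, which applies to both $\Delta u$ and $f$. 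This recovers the pre-differentiated form (\ref{modeln}). Substituting the key identity $g=\beta_{\alpha_0}*k$ supplied by (\ref{mh7}) then rewrites (\ref{modeln}) as $\beta_{\alpha_0}*h=0$, where $h:={}^c\p_t^{\alpha(t)}u-\Delta u-f$.

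The crucial next step is to invoke injectivity of the fractional integral. Since $k\in L^1(0,T)$ and $\p_t u\in L^p(L^2)$, Young's convolution inequality gives $k*\p_t u\in L^p(L^2)$, so $h\in L^p(L^2)\subset L^1(L^2)$. Taking the Laplace transform, $\mathcal{L}(\beta_{\alpha_0}*h)(z)=z^{-\alpha_0}\mathcal{L}h(z)\equiv 0$ forces $\mathcal{L}h\equiv 0$ and hence $h=0$ a.e., so $u$ solves (\ref{VtFDEs}); the conditions (\ref{ibc}) are inherited from the transformed problem. For uniqueness, if $u_1,u_2$ both solve (\ref{VtFDEs})--(\ref{ibc}), then the difference $w=u_1-u_2$ satisfies the original model with zero data; convolving that equation with $\beta_{\alpha_0}$, using (\ref{mh7}) to identify the first term with $g*\p_t w$, and differentiating in $t$ (again using $g(0)=1$) shows $w$ solves (\ref{Model2})--(\ref{ibc}) with zero data, whence Theorem~\ref{thm:aux} gives $w\equiv 0$. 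The stability estimate is identical to that of Theorem~\ref{thm:aux}.

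The main obstacle I anticipate is justifying the interchange of time differentiation/integration with the various convolutions at the regularity level $W^{1,p}(L^2)\cap L^p(\check H^2)$, specifically verifying that the boundary contributions at $t=0$ vanish when passing between (\ref{Model2}) and (\ref{modeln}). This requires confirming $(g*\p_t u)(0)=0$ and $(\beta_{\alpha_0}*\Delta u)(0)=0$ in a rigorous sense, which rests on the $L^p$ integrability of $\p_t u$ and $\Delta u$ together with the $L^1$ integrability of $g$ and $\beta_{\alpha_0}$ near zero. This should reduce to a routine dominated-convergence argument once the functional framework is fixed, but it is the only genuinely delicate point in the equivalence argument.
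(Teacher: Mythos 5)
Your overall architecture coincides with the paper's: existence is transferred from Theorem~\ref{thm:aux} by showing that the unique solution of (\ref{Model2})--(\ref{ibc}) also solves (\ref{VtFDEs})--(\ref{ibc}), and uniqueness goes in the reverse direction exactly as you describe. Your use of injectivity of $I_t^{\alpha_0}$ via the Laplace transform, in place of the paper's convolution with $\beta_{1-\alpha_0}$ followed by differentiation, is an equivalent cosmetic variation. However, the step you flag as ``the only genuinely delicate point'' and then dismiss as a routine dominated-convergence argument is a genuine gap, and it is precisely the step to which the paper devotes most of its proof. The issue is your claim that $(\beta_{\alpha_0}*\Delta u)(t)\to 0$ as $t\to 0^+$ follows from $\Delta u\in L^p(L^2)$ together with $\beta_{\alpha_0}\in L^1$. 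A H\"older bound would need $\beta_{\alpha_0}\in L^{p'}(0,t)$ with $p'=p/(p-1)$; but $p<\frac{1}{1-\alpha_0}$ forces $p'>\frac{1}{\alpha_0}$, whereas $\beta_{\alpha_0}\in L^{\gamma}$ only for $\gamma<\frac{1}{1-\alpha_0}$, so the two requirements are incompatible whenever $\alpha_0\le\frac12$ (and compatible only for $p>\frac{1}{\alpha_0}$ when $\alpha_0>\frac12$). The failure is not merely technical: for $\alpha_0<\frac12$ choose $\beta$ with $\alpha_0<\beta<\frac1p$ (possible since $\frac1p>1-\alpha_0>\alpha_0$) and consider $\|\Delta u(s)\|=s^{-\beta}$; this is in $L^p$, yet $(\beta_{\alpha_0}*\|\Delta u\|)(t)\sim t^{\alpha_0-\beta}\to\infty$. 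So $L^p$-integrability alone cannot produce the vanishing boundary term, and there is no dominating integrable function to invoke.

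The paper closes this gap by returning to the mild-solution representation (\ref{repu}) and the resolvent estimate (\ref{propF}) to obtain the pointwise bound $\|\Delta u(t)\|\le Q\|u_0\|_{\check H^2}+Q(t^{-\alpha_0}*\|p_u\|)(t)$ (this is (\ref{mh14})), and then exploits the semigroup cancellation $\beta_{\alpha_0}*\beta_{1-\alpha_0}=1$ \emph{before} applying H\"older: $\beta_{\alpha_0}*\|\Delta u\|\le Qt^{\alpha_0}\|u_0\|_{\check H^2}+Q(1*\|p_u\|)\le Qt^{\alpha_0}\|u_0\|_{\check H^2}+Qt^{1/p'}\|p_u\|_{L^p(L^2)}\to 0$. (In the paper's phrasing one writes (\ref{Model2}) as $\p_t(\cdots)=0$, deduces that the bracket equals a constant $c_0$, and uses this estimate to show $c_0=0$; that is your boundary-term computation in disguise.) Once you replace ``dominated convergence'' by this representation-based estimate, your argument goes through: the remaining ingredients --- $g(0)=1$, $(g*\p_t u)(0)=0$ from the boundedness of $g$ and $\p_t u\in L^p(L^2)$, $(\beta_{\alpha_0}*f)(0)=0$ from $\|f\|_{L^\infty(L^2)}\le Q\|f\|_{W^{1,1}(L^2)}$, the injectivity step, and the uniqueness direction --- are all sound and match the paper.
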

\begin{proof}
We first show that a solution to (\ref{Model2})--(\ref{ibc}), which uniquely exists in $W^{1,p}(L^2)\cap L^p(\check H^2)$ by Theorem \ref{thm:aux}, is also a solution to (\ref{VtFDEs})--(\ref{ibc}). If $u\in W^{1,p}(L^2)\cap L^p(\check H^2)$ for $1< p<\frac{1}{1-\alpha_0}$ solves (\ref{Model2})--(\ref{ibc}), then (\ref{Model2}) could be rewritten as
$$\p_t (\beta_{\alpha_0}*k*\p_t u-\beta_{\alpha_0}*  \Delta u  -\beta_{\alpha_0}*f) =0,$$
which means that
\begin{align}\label{mh16}
g*\p_t u-\beta_{\alpha_0}*  \Delta u  -\beta_{\alpha_0}*f=c_0
\end{align}
for some constant $c_0$. We apply the $\|\cdot\|_{L^\infty(0,t;L^2)}$ on both sides and use the boundedness of $g$ to obtain
\begin{align}
|c_0||\Omega|^{1/2}&\leq Q\||g|*\|\p_tu\|\|_{L^\infty(0,t)}+\|\beta_{\alpha_0}*  \Delta u\|_{L^\infty(0,t;L^2)}+Q\|\beta_{\alpha_0}*\|f\|\|_{L^\infty(0,t)}\nonumber\\
&\leq Q\|g\|_{L^\gamma(0,t)}\|\p_tu\|_{L^p(L^2)}+\|\beta_{\alpha_0}*  \Delta u\|_{L^\infty(0,t;L^2)}+Q\|\beta_{\alpha_0}\|_{L^1(0,t)}\|f\|_{L^\infty(L^2)}\nonumber\\
&\leq Qt^{\frac{1}{\gamma}}+Qt^{\alpha_0}+Q\|\beta_{\alpha_0}*  \|\Delta u\|\|_{L^\infty(0,t)}\label{mh15}
\end{align}
for some $\gamma<\infty$ satisfying $\frac{1}{\gamma}+\frac{1}{p}=1$.
To estimate $\|\beta_{\alpha_0}*  \|\Delta u\|\|_{L^\infty(0,t)}$, we employ (\ref{mh14}) to obtain
 \begin{align*}
\beta_{\alpha_0}* \|\Delta u\|&\leq Q\beta_{\alpha_0}* \|u_0\|_{\check H^2}+Q\beta_{\alpha_0}* t^{-\alpha_0}*\|p_u\|\leq Qt^{\alpha_0} \|u_0\|_{\check H^2}+Q(1*\|p_u\|).
\end{align*}
We then take the $\|\cdot\|_{L^\infty(0,t)}$ on both sides and apply (\ref{estpu}) to find
 \begin{align*}
\|\beta_{\alpha_0}* \|\Delta u\|\|_{L^\infty(0,t)}&\leq Qt^{\alpha_0} \|u_0\|_{\check H^2}+Q\|1*\|p_u\|\|_{L^\infty(0,t)}\\
&\hspace{-1in}\leq Qt^{\alpha_0} \|u_0\|_{\check H^2}+Qt^{\frac{1}{\gamma}}\|p_u\|_{L^p(L^2)}\leq  Qt^{\alpha_0} \|u_0\|_{\check H^2}+Qt^{\frac{1}{\gamma}}(\|u_0\|_{\check H^2}+Q\|f\|_{W^{1,1}(L^2)})
\end{align*}
for some $\gamma<\infty$ satisfying $\frac{1}{\gamma}+\frac{1}{p}=1$.
We invoke this in (\ref{mh15}) to finally obtain
$
|c_0||\Omega|^{1/2}\leq Qt^{\frac{1}{\gamma}}+Qt^{\alpha_0}.
$
Passing $t\rightarrow 0+$ implies $c_0=0$, which, together with (\ref{mh16}), gives
$
g*\p_t u-\beta_{\alpha_0}*  \Delta u  -\beta_{\alpha_0}*f=0.
$
We then calculate its convolution with $\beta_{1-\alpha_0}$ and apply $\beta_{1-\alpha_0}*g=\beta_{1-\alpha_0}*\beta_{\alpha_0}*k=1*k$ to obtain
$
1*k*\p_t u-1*  \Delta u  -1*f=0.
$
Differentiating this equation leads to (\ref{VtFDEs}), which implies $u$ solves (\ref{VtFDEs})--(\ref{ibc}). 
 
To prove the uniqueness of the solutions to (\ref{VtFDEs})--(\ref{ibc}) in $W^{1,p}(L^2)\cap L^p(\check H^2)$, suppose (\ref{VtFDEs})--(\ref{ibc}) with $f\equiv 0$ and $u_0\equiv 0$ has a solution $u\in W^{1,p}(L^2)\cap L^p(\check H^2)$. Then $u$ satisfies (\ref{modeln}) with $f\equiv 0$. As $g*\p_t u\in W^{1,p}(L^2)$, so does $\beta_{\alpha_0}*\Delta u$ such that we differentiate (\ref{modeln}) to get (\ref{Model2}) with $f\equiv 0$. Then we apply Theorem \ref{thm:aux} to get $u\equiv 0$, which completes the proof.
\end{proof}

\subsection{Solution regularity}
We give pointwise-in-time estimates of the temporal derivatives of the solutions to model (\ref{VtFDEs})--(\ref{ibc}), which are feasible for numerical analysis. 
\begin{theorem}\label{thm:reg}
Suppose $u_0\in \check H^{2(m+1)}$ and $f\in W^{1,\frac{1}{\alpha_0-\sigma}}(\check H^{2m})$ for $0<\sigma\ll \alpha_0$ for $m=0,1$. Then 
\begin{align*}
\|\p_t u\|_{\check H^{2m}}&\leq   Qt^{\alpha_0-1}(\|u_0\|_{\check H^{2(m+1)}}+\| f\|_{W^{1,\frac{1}{\alpha_0-\sigma}}(\check H^{2m})}),~~a.e. ~t\in (0,T].
\end{align*}
\end{theorem}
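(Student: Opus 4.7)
The plan is to start from the representation (\ref{repu}) and differentiate in time, using $F(0)=I$, to obtain
$$\partial_t u(t) = F'(t) u_0 + p_u(t) + \int_0^t F'(t-s) p_u(s) \, ds,$$
and then bound each summand pointwise in $t$ and in $\check H^{2m}$. I would handle $m=0$ directly and reduce $m=1$ to $m=0$ by applying $-\Delta$ to (\ref{VtFDEs}): since $-\Delta$ commutes with $\partial_t^{\alpha(t)}$, the function $v:=-\Delta u$ solves the same model with $u_0$ replaced by $-\Delta u_0\in\check H^2$ and $f$ replaced by $-\Delta f\in W^{1,1/(\alpha_0-\sigma)}(L^2)$; the $m=0$ estimate for $v$ then yields $\|\partial_t u\|_{\check H^2}=\|\partial_t v\|\leq Q t^{\alpha_0-1}(\|u_0\|_{\check H^4}+\|f\|_{W^{1,1/(\alpha_0-\sigma)}(\check H^2)})$ as desired.

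For the case $m=0$, the bound $\|F'(t) u_0\|\leq Q t^{\alpha_0-1}\|u_0\|_{\check H^2}$ follows from the spectral representation $F'(t)\phi_i=-\lambda_i t^{\alpha_0-1} E_{\alpha_0,\alpha_0}(-\lambda_i t^{\alpha_0})\phi_i$ together with the standard bound $|E_{\alpha_0,\alpha_0}(-z)|\leq Q/(1+z)$ for $z\geq 0$. For the pointwise bound on $p_u=\partial_t^{1-\alpha_0}f - g'*\partial_t u$, I would write $\partial_t^{1-\alpha_0}f = f(0)\beta_{\alpha_0}+\beta_{\alpha_0}*\partial_t f$ and apply H\"older with exponents $1/(\alpha_0-\sigma)$ and its dual (using the embedding $W^{1,1/(\alpha_0-\sigma)}\hookrightarrow C$ to control $f(0)$) to get $\|\partial_t^{1-\alpha_0}f(t)\|_{\check H^{2m}}\leq Q t^{\alpha_0-1}\|f\|_{W^{1,1/(\alpha_0-\sigma)}(\check H^{2m})}$. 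The term $g'*\partial_t u$ is controlled via the logarithmic estimate (\ref{g'}) combined with the already available $L^p$-in-time bound $\|\partial_t u\|_{L^p(L^2)}\leq Q(\|u_0\|_{\check H^2}+\|f\|_{W^{1,1}(L^2)})$ from Theorem \ref{thm:well}; the output is a bounded function of $t$, hence automatically $\leq Q t^{\alpha_0-1}$ on $(0,T]$. Finally, the convolution $\int_0^t F'(t-s) p_u(s) \, ds$ is handled by the rewriting (\ref{Well:e3})--(\ref{Well:e4}) through the regularized kernel $\mathcal R$ and the transferred derivative $\partial_s^\varepsilon p_u$, followed by H\"older with a conjugate pair $(p,q)$ satisfying $p>1/\varepsilon$; the resulting temporal factor $t^{\varepsilon-1/p}$ is nonnegative and thus dominated by $Q t^{\alpha_0-1}$ on $(0,T]$.

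The main obstacle is the nonintegrable $t^{-1}$ behaviour of $\|F'(t)\|_{L^2\to L^2}$, which rules out any direct convolution estimate in the last term; the rescue is precisely the transfer of $\partial_s^\varepsilon$ onto $p_u$ in (\ref{Well:e3}), whose validity rests on $(I_t^{1-\varepsilon}p_u)(0)=0$, and this in turn forces the strengthened source regularity $f\in W^{1,1/(\alpha_0-\sigma)}(\check H^{2m})$ rather than merely $W^{1,1}(\check H^{2m})$. A secondary technical point is that the H\"older-produced temporal decay $t^{\varepsilon-1/p}$ cannot be tuned to match $t^{\alpha_0-1}$ exactly; this mismatch is harmless only because the time interval is bounded, so the difference of exponents is absorbed into the generic constant $Q=Q(T)$.
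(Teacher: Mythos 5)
Your skeleton is the same as the paper's (differentiate the Duhamel formula \eqref{repu}, bound $F'(t)u_0$, $p_u$, and the tail convolution via the transfer \eqref{Well:e3}--\eqref{Well:e4} of $\partial_s^\varepsilon$ onto $p_u$), but you deviate in two substantive ways. First, you reduce $m=1$ to $m=0$ by commuting $-\Delta$ through the equation; the paper instead applies $\Delta$ to the differentiated representation and redoes the estimate for $\tilde p_u=\partial_t^{1-\alpha_0}\Delta f-g'*\partial_t\Delta u$. Your reduction is fine in the spectral $\check H^s$ setting, but it needs one more sentence: you must identify $-\Delta u$ with the unique solution of the problem with data $(-\Delta u_0,-\Delta f)$ (e.g.\ by showing $(-\Delta)^{-1}w$ solves the original problem and invoking the uniqueness in Theorem~\ref{thm:well}), since a priori you do not yet know $\partial_t\Delta u\in L^p(L^2)$. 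Second, and more interestingly, you bypass the paper's weighted singular Gronwall argument (the $e^{-\theta t}t^{1-\alpha_0}$ absorption at the end of its proof) by controlling the self-referential terms $g'*\partial_t u$ and $\partial_s^\varepsilon(g'*\partial_t u)\le Qs^{-2\varepsilon}*\|\partial_t u\|$ through H\"older against the already-established bound $\|\partial_t u\|_{L^p(L^2)}\le Q$ of Theorem~\ref{thm:well} with $p\in(\tfrac{1}{1-2\varepsilon},\tfrac{1}{1-\alpha_0})$. This works (it needs $2\varepsilon<\alpha_0$, which you have) and is genuinely simpler than the paper's closing argument; what the Gronwall route buys instead is independence from the a priori $L^p$ bound and a template that the paper reuses verbatim for the second-derivative estimate in Theorem~\ref{thm:reg2}.

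There is, however, one concrete flaw in your last step. You propose to estimate $\int_0^t\mathcal R(t-s)\partial_s^\varepsilon p_u(s)\,ds$ by H\"older with $\mathcal R\in L^q$, $q<\tfrac{1}{1-\varepsilon}$, hence $\partial_s^\varepsilon p_u\in L^p$ with $p>1/\varepsilon$, producing a \emph{bounded} factor $t^{\varepsilon-1/p}$. But $\partial_s^\varepsilon p_u$ contains the term $\beta_{\alpha_0-\varepsilon}(s)f(0)\sim s^{\alpha_0-\varepsilon-1}$ (from \eqref{imh}), which lies in $L^p$ only for $p<\tfrac{1}{1-\alpha_0+\varepsilon}$ — far below $1/\varepsilon$ for small $\varepsilon$ and $\alpha_0<1$. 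So the H\"older pairing fails for exactly this piece, and the conclusion that the tail term is bounded is false in general: that piece must be convolved exactly,
\begin{equation*}
\int_0^t (t-s)^{\varepsilon-1}s^{\alpha_0-\varepsilon-1}\,ds = B(\varepsilon,\alpha_0-\varepsilon)\,t^{\alpha_0-1},
\end{equation*}
which is precisely where the stated $t^{\alpha_0-1}$ rate comes from (this is how the paper treats it, via \eqref{zxc6} and the Young-inequality step \eqref{zxc4} for the remaining $\beta_{\alpha_0-\varepsilon}*\partial_t f$ contribution). Splitting $\partial_s^\varepsilon p_u$ into the singular $f(0)$-part, handled by the Beta identity, and the remainder, handled by your H\"older argument, repairs the step; as written, it does not go through.
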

\begin{remark}
For $\alpha(t)\equiv\alpha$ for some $0<\alpha<1$, this theorem implies $\p_tu\sim t^{\alpha-1}$, which is consistent with the classical results shown in, e.g. \cite{Luc,SakYam,StyOriGra}, and indicates that the singularity of the solutions to variable-exponent subdiffusion could be essentially characterized by the initial behavior of the variable exponent. 
\end{remark}
\begin{proof}
We mainly prove the case $m=1$ since the case $m=0$ could be proved by the same procedure. As shown in the proof of Theorem \ref{thm:well}, the unique solution to (\ref{Model2})--(\ref{ibc}) in $W^{1,p}(L^2)\cap L^p(\check H^2)$ is also the unique solution to (\ref{VtFDEs})--(\ref{ibc}). Thus we could perform analysis based on (\ref{repu}).  We differentiate both sides of (\ref{repu}) with respect to $t$ and apply $\frac{d}{dt}F(t)=\Delta E(t)$ \cite[Lemma 6.2]{Jinbook} with
$E(t):=\frac{1}{2\pi \rm i}\int_{\Gamma_\theta} e^{tz}(z^{\alpha_0}-\Delta)^{-1}d z $
 to obtain 
$
\p_tu=\Delta E(t)u_0+p_u+\int_0^tF'(t-s)p_u(s)ds.
$
Then we further apply $\Delta$ on both sides of this equation to obtain
\begin{align}\label{mh22}
\p_t\Delta u= E(t)\Delta^2u_0+\tilde p_u+\int_0^tF'(t-s) \tilde p_u(s)ds,
\end{align}
where $\tilde p_u:=\p_t^{1-\alpha_0}\Delta f-g'*\p_t\Delta u$. We apply $\|\cdot\|$ on both sides and invoke $\|E(t)\|\leq Qt^{\alpha_0-1}$ \cite[Theorem 6.4]{Jinbook} to get
 \begin{align}\label{zxc5}
\|\p_t\Delta u\|\leq  Qt^{\alpha_0-1}\|u_0\|_{\check H^4}+\|\tilde p_u\|+\Big\|\int_0^tF'(t-s) \tilde p_u(s)ds\Big\|.
\end{align}
By (\ref{g'}) and
\begin{align}\label{zxc7}
\p_t^{1-\alpha_0}\Delta f=\beta_{\alpha_0}\Delta f(0)+\beta_{\alpha_0}*\p_t\Delta f, 
\end{align}
we have
\begin{align}
\|\tilde p_u\|&\leq \beta_{\alpha_0}\|\Delta f(0)\|+Q\beta_{\alpha_0}*\|\p_t\Delta f\|+Qt^{-\varepsilon}*\|\p_t\Delta u\|\nonumber\\
&\leq \beta_{\alpha_0}\|\Delta f(0)\|+Qt^{\alpha_0-1}\|\Delta f\|_{W^{1,\frac{1}{\alpha_0-\sigma}}(L^2)}+Qt^{-\varepsilon}*\|\p_t\Delta u\|\label{zxc6}
\end{align}
for a.e. $t\in (0,T]$ where we applied the Young's convolution inequality with $\frac{1}{\gamma}+\frac{1}{\frac{1}{\alpha_0-\sigma}}=1$ for some $0<\sigma\ll \alpha_0$ (such that $\gamma<\frac{1}{1-\alpha_0}$) to get
\begin{align}
\|\beta_{\alpha_0}*\|\p_t\Delta f\|\|_{L^\infty(0,t)}&\leq \|\beta_{\alpha_0}\|_{L^\gamma(0,t)}\|\|\p_t\Delta f\|\|_{L^{\frac{1}{\alpha_0-\sigma}}(0,t)}\nonumber\\
&\hspace{-0.5in}\leq Qt^{\alpha_0-1+\frac{1}{\gamma}}\|\Delta f\|_{W^{1,\frac{1}{\alpha_0-\sigma}}(L^2)}\leq Q\|\Delta f\|_{W^{1,\frac{1}{\alpha_0-\sigma}}(L^2)}.\label{zxc4}
\end{align}
We invoke (\ref{zxc6}) in (\ref{zxc5}) and apply the same treatment as (\ref{Well:e2})--(\ref{Well:e4}) to bound the last right-hand side term of (\ref{zxc5}) to obtain for $0<\varepsilon\ll \alpha_0$
\begin{align}\label{mh19}
\|\p_t\Delta u\|&\leq  QMt^{\alpha_0-1}+Qt^{-\varepsilon}*\|\p_t\Delta u\|+Qt^{\varepsilon-1}*\|\p_t^{\varepsilon} \tilde p_u\|.
\end{align}
where $M:=\|u_0\|_{\check H^4}+\|\Delta f\|_{W^{1,\frac{1}{\alpha_0-\sigma}}(L^2)}$.
According to  (\ref{imh}), we have
$\p_t^{\varepsilon}\p_t^{1-\alpha_0}\Delta f=\beta_{\alpha_0-\varepsilon}\Delta f(0)+\beta_{\alpha_0-\varepsilon}*\p_t\Delta f,$
while by the same treatment as (\ref{mh1})--(\ref{mh6}), we have
 $
| \partial_t^\varepsilon(g'*\p_t \Delta u)|\leq Qt^{-2\varepsilon}*|\p_t \Delta u|.
$
We invoke these two equations in (\ref{mh19}) to obtain
\begin{align*}
\|\p_t\Delta u\|&\leq  QMt^{\alpha_0-1}+Qt^{-\varepsilon}*\|\p_t\Delta u\|\\
&\qquad+Q\beta_\varepsilon*[\beta_{\alpha_0-\varepsilon}\|\Delta f(0)\|+\beta_{\alpha_0-\varepsilon}*\|\p_t\Delta f\|+\beta_{1-2\epsilon}*\|\p_t \Delta u\|]\\
&\leq QMt^{\alpha_0-1}+Q[\beta_{\alpha_0}\|\Delta f(0)\|+\beta_{\alpha_0}*\|\p_t\Delta f\|+\beta_{1-\varepsilon}*\|\p_t \Delta u\|],\nonumber
\end{align*}
which, together with (\ref{zxc4}), yields
\begin{align}\label{zxc2}
\|\p_t\Delta u\|&\leq   QMt^{\alpha_0-1}+Q\beta_{1-\varepsilon}*\|\p_t \Delta u\|
\end{align}
for a.e. $t\in (0,T]$, that is,
\begin{align*}
\|e^{-\theta t}t^{1-\alpha_0}\p_t\Delta u\|&\leq   QM+Q\int_0^te^{-\theta (t-s)}\frac{t^{1-\alpha_0}}{s^{1-\alpha_0}}\frac{\|e^{-\theta s}s^{1-\alpha_0}\p_s \Delta u\|}{(t-s)^{\varepsilon}}ds
\end{align*}
for $a.e. ~t\in (0,T]$ and $\theta>0$. For a fixed $\bar t\in (0,T]$, we have for a.e. $t\in (0,\bar t]$
\begin{align*}
\|e^{-\theta t}t^{1-\alpha_0}\p_t\Delta u\|&\leq   QM+Q\int_0^t\frac{t^{1-\alpha_0}}{s^{1-\alpha_0}}\frac{e^{-\theta (t-s)}}{(t-s)^{\varepsilon}}ds\|e^{-\theta t}t^{1-\alpha_0}\p_t \Delta u\|_{L^\infty (0,\bar t;L^2)}
\end{align*}
Note that
\begin{align*}
&\int_0^t\frac{t^{1-\alpha_0}}{s^{1-\alpha_0}}\frac{e^{-\theta (t-s)}}{(t-s)^{\varepsilon}}ds=t^{1-\varepsilon} \int_0^1 e^{-\theta t y}(1-y)^{\alpha_0-1}y^{-\varepsilon}dy\\
&\qquad=\frac{t^{\frac{1-\varepsilon}{2}}}{\theta^{\frac{1-\varepsilon}{2}}} \int_0^1 (t\theta y)^{\frac{1-\varepsilon}{2}} e^{-\theta t y}(1-y)^{\alpha_0-1}y^{-\varepsilon-\frac{1-\varepsilon}{2}}dy\\
&\qquad\leq\frac{t^{\frac{1-\varepsilon}{2}}}{\theta^{\frac{1-\varepsilon}{2}}} \int_0^1  (1-y)^{\alpha_0-1}y^{-\frac{\alpha_0}{2}-\frac{1}{2}}dy\leq \frac{Q}{\theta^{\frac{1-\varepsilon}{2}}}.
\end{align*}
We combine the above two equations to get for a.e. $t\in (0,\bar t]$
\begin{align*}
\|e^{-\theta t}t^{1-\alpha_0}\p_t\Delta u\|&\leq   QM+\frac{Q}{\theta^{\frac{1-\varepsilon}{2}}}\|e^{-\theta t}t^{1-\alpha_0}\p_t \Delta u\|_{L^\infty (0,\bar t;L^2)},
\end{align*}
which implies
\begin{align*}
\|e^{-\theta t}t^{1-\alpha_0}\p_t\Delta u\|_{L^\infty (0,\bar t;L^2)}&\leq   QM+\frac{Q}{\theta^{\frac{1-\varepsilon}{2}}}\|e^{-\theta t}t^{1-\alpha_0}\p_t \Delta u\|_{L^\infty (0,\bar t;L^2)}.
\end{align*}
Selecting $\theta$ large enough we find
$
\|e^{-\theta t}t^{1-\alpha_0}\p_t\Delta u\|_{L^\infty (0,\bar t;L^2)}\leq   QM,
$
which means
\begin{align}\label{zxc3}
\|\p_t\Delta u\|&\leq   QMt^{\alpha_0-1},~~a.e. ~t\in (0,\bar t].
\end{align}
We take $\bar t=T$ to complete the proof.
\end{proof}

\begin{theorem}\label{thm:reg2}
Suppose $u_0\in \check H^{2(m+1)}$ and $f\in W^{2,\frac{1}{\alpha_0-\sigma}}(\check H^{2m})$ for $0<\sigma\ll \alpha_0$ and $m=0,1$. Then 
\begin{align*}
\|\p_t^2 u\|_{\check H^{2m}}&\leq   Qt^{\alpha_0-2}(\|u_0\|_{\check H^{2(m+1)}}+\|f\|_{W^{2,\frac{1}{\alpha_0-\sigma}}(\check H^{2m})}),~~a.e. ~t\in (0,T].
\end{align*}
\end{theorem}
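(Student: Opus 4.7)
My approach is to imitate the proof of Theorem~\ref{thm:reg} one derivative deeper. Starting from (\ref{repu}), I would differentiate twice in $t$, apply $\Delta^m$ on both sides, and use $\tfrac{d}{dt}F(t)=\Delta E(t)$ together with the contour-integral bound $\|E'(t)\|\leq Qt^{\alpha_0-2}$ (immediate from the definition of $E$ and (\ref{GammaEstimate})). This yields an identity of the shape
\[
\p_t^2\Delta^m u = E'(t)\Delta^{m+1}u_0 + \p_t \tilde p_u^{(m)}(t) + \p_t\!\!\int_0^t F'(t-s)\,\tilde p_u^{(m)}(s)\,ds,
\]
with $\tilde p_u^{(m)}:=\p_t^{1-\alpha_0}\Delta^m f - g'*\p_t\Delta^m u$. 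Exactly as in (\ref{Well:e2})--(\ref{Well:e4}), the fractional-derivative device recasts the last term as $\int_0^t \mathcal R(t-s)\,\p_s^{1+\varepsilon}\tilde p_u^{(m)}(s)\,ds$ with $\|\mathcal R(t-s)\|\leq Q(t-s)^{\varepsilon-1}$, provided the boundary contributions $(I_t^{1-\varepsilon}\tilde p_u^{(m)})(0)$ vanish; this vanishing follows from $f\in W^{2,\frac{1}{\alpha_0-\sigma}}(\check H^{2m})$ together with the endpoint argument in (\ref{regf}).

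Next I would bound $\p_t^{1+\varepsilon}\tilde p_u^{(m)}$ piece by piece. For the source contribution I would push (\ref{imh}) one order further, writing
\[
\p_t^{1+\varepsilon}\p_t^{1-\alpha_0}\Delta^m f = \beta_{\alpha_0-\varepsilon-1}\Delta^m f(0)+\beta_{\alpha_0-\varepsilon}\p_t\Delta^m f(0)+\beta_{\alpha_0-\varepsilon}*\p_t^2\Delta^m f,
\]
whose $L^2$ norm is controlled by $Q t^{\alpha_0-2}\|f\|_{W^{2,\frac{1}{\alpha_0-\sigma}}(\check H^{2m})}$ via the Young inequality computation in (\ref{zxc4}) applied to $\p_t^2 f\in L^{1/(\alpha_0-\sigma)}$. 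For the nonlocal contribution I would shuffle another derivative onto the smooth factor as in (\ref{mh1})--(\ref{mh4}), writing $\p_t^{1+\varepsilon}(g'*\p_t\Delta^m u)=\p_t(\beta_{1-\varepsilon}*g')*\p_t^2\Delta^m u$ modulo boundary pieces that vanish by (\ref{mh5}), and using (\ref{g''})--(\ref{g'''}) to bound $|\p_t(\beta_{1-\varepsilon}*g')|\leq Qt^{-2\varepsilon}$; this gives an overall estimate of the form $Qt^{-2\varepsilon}*|\p_t^2\Delta^m u|$ plus lower-order terms.

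Assembled together, the estimates produce a self-referential inequality
\[
\|\p_t^2\Delta^m u\| \leq Q M\,t^{\alpha_0-2} + Q\,\beta_{1-\varepsilon}*\|\p_t^2\Delta^m u\|,\quad M:=\|u_0\|_{\check H^{2(m+1)}}+\|f\|_{W^{2,\frac{1}{\alpha_0-\sigma}}(\check H^{2m})},
\]
which I would close with the weighted-$L^\infty$ Gronwall step (\ref{zxc2})--(\ref{zxc3}), now with weight $e^{-\theta t}t^{2-\alpha_0}$ (replacing $e^{-\theta t}t^{1-\alpha_0}$) and $\theta$ chosen large enough to absorb the self-referential constant. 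The main obstacle I expect is the extra order of singularity at $t=0$: both $E'(t)\sim t^{\alpha_0-2}$ and $g''(t)\sim 1/t$ produce non-integrable endpoint contributions when the convolution $F'*\tilde p_u^{(m)}$ is pointwise differentiated in $t$, so those boundary terms must be reabsorbed through the fractional-derivative device (\ref{imh})--(\ref{Well:e3}) rather than evaluated directly. Once that shuffling is carried out one order deeper, the remaining computations parallel those of Theorem~\ref{thm:reg}.
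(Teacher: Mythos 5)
There is a genuine gap at the final step of your plan, and it is exactly the obstacle that forces the paper to organize the proof differently. Your target self-referential inequality is
\begin{align*}
\|\p_t^2\Delta^m u\| \;\leq\; Q M\,t^{\alpha_0-2} \;+\; Q\,\beta_{1-\varepsilon}*\|\p_t^2\Delta^m u\|,
\end{align*}
but this cannot be closed: if $\|\p_t^2\Delta^m u\|\sim t^{\alpha_0-2}$ (which is what you are trying to prove), the convolution on the right is $\int_0^t(t-s)^{-\varepsilon}s^{\alpha_0-2}\,ds=+\infty$ because $\alpha_0-2<-1$. Equivalently, in the weighted Gronwall step with weight $e^{-\theta t}t^{2-\alpha_0}$ the kernel becomes $e^{-\theta(t-s)}(t-s)^{-\varepsilon}\,t^{2-\alpha_0}s^{-(2-\alpha_0)}$, and $s^{-(2-\alpha_0)}$ is not integrable near $s=0$ since $2-\alpha_0>1$; so the analogue of the computation in (\ref{zxc2})--(\ref{zxc3}) does not produce a finite factor $Q\theta^{-(1-\varepsilon)/2}$. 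The same non-integrability already appears earlier in your scheme: pushing (\ref{imh}) one order further produces the term $\beta_{\alpha_0-\varepsilon-1}\Delta^m f(0)\sim s^{\alpha_0-\varepsilon-2}$ inside $\p_s^{1+\varepsilon}\tilde p_u^{(m)}$, which is not locally integrable, so the recast integral $\int_0^t\mathcal R(t-s)\,\p_s^{1+\varepsilon}\tilde p_u^{(m)}(s)\,ds$ is divergent unless $f(0)=0$. Shuffling the fractional derivative ``one order deeper'' does not repair this, because the divergence sits at the lower endpoint $s=0$ of the convolutions, not at the upper endpoint where the boundary terms live.

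The paper's proof avoids all of this by a different bookkeeping: it first applies (\ref{Well:e3}) to the once-differentiated identity (\ref{mh22}), \emph{multiplies the whole identity by} $t$, and only then differentiates once more. The quantity that then appears inside every convolution is $s\,\p_s^2\Delta u\sim s^{\alpha_0-1}$ (locally integrable), the source contribution stays at the integrable level $\beta_{\alpha_0-\varepsilon}\sim s^{\alpha_0-\varepsilon-1}$, and the closing inequality reads $\|t\p_t^2\Delta u\|\leq Q\tilde M t^{\alpha_0-1}+Qt^{-\varepsilon}*\|t\p_t^2\Delta u\|$, to which the Gronwall argument of (\ref{zxc2})--(\ref{zxc3}) applies verbatim with the same weight $t^{1-\alpha_0}$ as in Theorem \ref{thm:reg}. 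The price is the longer decomposition into the terms $J_1$--$J_9$ of (\ref{ttt}), including product-rule terms such as $(tg')'*\p_t\Delta u$ and $\p_t\big(t\p_t(\beta_{1-\varepsilon}*g')\big)$, which is where (\ref{g'''}) is needed. You should restructure your argument around the estimate for $t\p_t^2\Delta^m u$ rather than $\p_t^2\Delta^m u$; the rest of your plan (the bound $\|E'(t)\|\leq Qt^{\alpha_0-2}$, the use of (\ref{Well:e2})--(\ref{Well:e4}), and the treatment of $g'$ via (\ref{mh1})--(\ref{mh4})) is consistent with the paper once that change is made.
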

\begin{proof}
We mainly prove the case $m=1$ since the case $m=0$ could be proved by the same procedure.
We apply (\ref{Well:e3}) to (\ref{mh22}) and multiply the resulting equation by $t$ to obtain 
\begin{align*}
t\p_t\Delta u= tE(t)\Delta^2u_0+t\tilde p_u+\int_0^t(t-s) \mathcal R(t-s) \partial_s^\varepsilon \tilde p_u(s)d s+\int_0^t \mathcal R(t-s) s\partial_s^\varepsilon \tilde p_u(s)d s.
\end{align*}
Differentiating this equation with respect to $t$ yields
\begin{align}
&t\p^2_t\Delta u \nonumber\\
&\quad=- \p_t\Delta u+ E(t)\Delta^2u_0+tE'(t)\Delta^2u_0+\p_t(t\tilde p_u)+ \lim_{s\rightarrow t^-}[(t-s)\mathcal R(t-s) \partial_s^\varepsilon\tilde p_u(s)] \nonumber\\
&\qquad+ \int_0^t\mathcal R(t-s) \partial_s^\varepsilon\tilde p_u(s)ds+ \int_0^t(t-s)\mathcal R'(t-s) \partial_s^\varepsilon\tilde p_u(s)ds\nonumber\\
&\qquad+\lim_{s\rightarrow t^-}\mathcal R(s) [\theta\partial_\theta^\varepsilon\tilde p_u]|_{\theta=t-s}+\int_0^t \mathcal R(t-s) \p_s(s\partial_s^\varepsilon\tilde p_u(s))ds=:\sum_{i=1}^9J_i.\label{ttt}
\end{align}
Then we bound the $\|\cdot\|$ norms of $J_1$--$J_9$. By Theorem \ref{thm:reg}, we have 
$\|J_1\|\leq   QMt^{\alpha_0-1}$ for a.e. $t\in (0,T]$.
We then apply $\|E(t)\|+t\|E'(t)\|\leq Qt^{\alpha_0-1}$ \cite[Theorem 6.4]{Jinbook} to bet
$\|J_2+J_3\|\leq Qt^{\alpha_0-1}\|u_0\|_{H^4}. $
To bound $J_4$, we apply (\ref{zxc7}) to obtain
\begin{align*}
t\tilde p_u=t\beta_{\alpha_0}\Delta f(0)+t(\beta_{\alpha_0}*\p_t\Delta f)-(tg')*\p_t\Delta u-g'*(t\p_t\Delta u).
\end{align*}
Differentiate this equation to get
\begin{align*}
\p_t(t\tilde p_u)&=\alpha_0\beta_{\alpha_0}\Delta f(0)+\beta_{\alpha_0}*\p_t\Delta f+t(\beta_{\alpha_0}\p_t\Delta f(0)+\beta_{\alpha_0}*\p_t^2\Delta f)\\
&\qquad-(tg')'*\p_t\Delta u-g'*[\p_t(t\p_t\Delta u)].
\end{align*}
We apply Theorem \ref{thm:reg}, (\ref{g'}), (\ref{g''}) and a similar argument as (\ref{zxc4}) to obtain
\begin{align*}
\|J_4\|&\leq Qt^{\alpha_0-1}\|\Delta f\|_{W^{2,\frac{1}{\alpha_0-\sigma}}(L^2)}+QMt^{-\varepsilon}*t^{\alpha_0-1}+Qt^{-\varepsilon}*\|t\p_t^2\Delta u\|\\
&\leq Q\tilde Mt^{\alpha_0-1}+Qt^{-\varepsilon}*\|t\p_t^2\Delta u\|,
\end{align*}
where $\tilde M:=\|u_0\|_{\check H^4}+\|\Delta f\|_{W^{2,\frac{1}{\alpha_0-\sigma}}(L^2)}$.

To treat the other terms, we apply the same derivations as (\ref{zxc1})--(\ref{mh6}), (\ref{imh}) and Theorem \ref{thm:reg} to obtain for $0<\varepsilon\ll \alpha_0$
\begin{align}
\|\partial_t^\varepsilon \tilde p_u(t)\|&=\|\p_t^{\varepsilon}\p_t^{1-\alpha_0}\Delta f-\partial_t^\varepsilon(g'*\p_t \Delta u)\|\nonumber\\
&=\|\beta_{\alpha_0-\varepsilon}\Delta f(0)+\beta_{\alpha_0-\varepsilon}*\p_t\Delta f-\partial_t^\varepsilon(g'*\p_t \Delta u)\|\nonumber\\
&\leq \beta_{\alpha_0-\varepsilon}\|\Delta f(0)\|+Q\beta_{\alpha_0-\varepsilon}*\|\p_t\Delta f\|+Qt^{-2\varepsilon}*\|\p_t\Delta u\|\nonumber\\
&\leq \beta_{\alpha_0-\varepsilon}\|\Delta f(0)\|+Q\beta_{\alpha_0-\varepsilon}*\|\p_t\Delta f\|+QMt^{\alpha_0-2\varepsilon},\label{mh23}\\
t\partial_t^\varepsilon \tilde p_u(t)&=t\beta_{\alpha_0-\varepsilon}\Delta f(0)+t(\beta_{\alpha_0-\varepsilon}*\p_t\Delta f)\nonumber\\
&\quad-\big(t\partial_t(\beta_{1-\varepsilon}*g')\big)*\p_t \Delta u-\big(\partial_t(\beta_{1-\varepsilon}*g')\big)*(t\p_t \Delta u).\label{mh24}
\end{align}
We then apply (\ref{Well:e4}) and (\ref{mh23}) to bound 
$$\|J_5\|\leq \lim_{s\rightarrow t^-} (t-s)^{\varepsilon} \|\partial_s^\varepsilon \tilde p_u(s)\|=0 $$
and
\begin{align*}
\|J_6\|&\leq Qt^{\varepsilon-1}*\|\partial_t^\varepsilon \tilde p_u(t)\|\leq Q\big[ \beta_{\alpha_0}\|\Delta f(0)\|+\beta_{\alpha_0}*\|\p_t\Delta f\|+Mt^{\alpha_0-\varepsilon}\big]\\
&\leq QM+Q\|\Delta f\|_{W^{2,1+\sigma}(L^2)}.
\end{align*}
To bound $J_7$, we follow the idea proposed in, e.g. \cite[Page 190]{Jinbook}, to set $\delta=t^{-1}$ in $\Gamma_\theta$ and apply the variable substitution $z=s\cos\phi+\rm is\sin\phi$ to bound $\mathcal R'(t)$ as
\begin{align*}
\|\mathcal R'(t)\|=\Big\|\frac{1}{2\pi \rm i}\int_{\Gamma_\theta}
z^{1+\alpha_0-\varepsilon} (z^{\alpha_0}-\Delta)^{-1} e^{zt} \d z\Big\|\leq \int_{\Gamma_\theta}e^{\mathfrak{R}(z)t}|z|^{1-\varepsilon}|dz|\leq Qt^{\varepsilon-2}.
\end{align*}
We apply this and a similar estimate as $J_6$ to bound $J_7$ as
$$
\|J_7\|\leq Qt^{\varepsilon-1}*\|\partial_t^\varepsilon\tilde p_u(t)\|\leq QM+Q\|\Delta f\|_{W^{2,1+\sigma}(L^2)}.
$$
To bound $J_8$, we apply (\ref{mh2}) to bound (\ref{mh24}) as
\begin{align*}
\|t\partial_t^\varepsilon \tilde p_u(t)\|&\leq t\beta_{\alpha_0-\varepsilon}\|\Delta f(0)\|+Qt(\beta_{\alpha_0-\varepsilon}*\|\p_t\Delta f\|)\\
&\quad+Qt^{1-2\varepsilon}*\|\p_t \Delta u\|+Qt^{-2\varepsilon}*\|t\p_t \Delta u\|\leq Q(M+\|\Delta f\|_{W^{2,1+\sigma}(L^2)})t^{\alpha_0-\varepsilon},
\end{align*}
which, together with (\ref{Well:e4}), implies
$$
\|J_8\|\leq \lim_{s\rightarrow t^-}Qs^{\varepsilon-1} (t-s)^{\alpha_0-\varepsilon}=0.
$$
To bound $J_9$, we differentiate (\ref{mh24}) with respect to $t$ to obtain
\begin{align}
\p_t(t\partial_t^\varepsilon \tilde p_u)&=\p_t(t\beta_{\alpha_0-\varepsilon})\Delta f(0)+\beta_{\alpha_0-\varepsilon}*\p_t\Delta f+t\p_t(\beta_{\alpha_0-\varepsilon}*\p_t\Delta f)\nonumber\\
&\quad-\big[\p_t\big(t\partial_t(\beta_{1-\varepsilon}*g')\big)\big]*\p_t \Delta u-\big(\partial_t(\beta_{1-\varepsilon}*g')\big)*[\p_t(t\p_t \Delta u)].\label{mh26}
\end{align}
By (\ref{mh4}), we have
 \begin{align*}
t\p_t(\beta_{1-\varepsilon}*g')&=(1-\varepsilon)t^{1-\varepsilon}\int_0^1\frac{(1-z)^{-\varepsilon}}{\Gamma(1-\varepsilon)}g'(tz)dz+t^{2-\varepsilon}\int_0^1\frac{(1-z)^{-\varepsilon}}{\Gamma(1-\varepsilon)}g''(tz)zdz,
\end{align*} 
which, together with (\ref{g''}) and (\ref{g'''}), implies
 \begin{align*}
&|\p_t(t\p_t(\beta_{1-\varepsilon}*g'))|\\
&\quad=\Big|(1-\varepsilon)^2t^{-\varepsilon}\int_0^1\frac{(1-z)^{-\varepsilon}}{\Gamma(1-\varepsilon)}g'(tz)dz+(1-\varepsilon)t^{1-\varepsilon}\int_0^1\frac{(1-z)^{-\varepsilon}}{\Gamma(1-\varepsilon)}g''(tz)zdz\\
&\qquad+(2-\varepsilon)t^{1-\varepsilon}\int_0^1\frac{(1-z)^{-\varepsilon}}{\Gamma(1-\varepsilon)}g''(tz)zdz+t^{2-\varepsilon}\int_0^1\frac{(1-z)^{-\varepsilon}}{\Gamma(1-\varepsilon)}g'''(tz)z^2dz\Big|\leq Qt^{-2\varepsilon}.
\end{align*} 
Furthermore, 
$
|t\p_t(\beta_{\alpha_0-\varepsilon}*\p_t\Delta f)|\leq Qt^{\alpha_0-\varepsilon}|(\p_t\Delta f)(0)|+Qt(\beta_{\alpha_0-\varepsilon}*|\p_t^2\Delta f|).
$
We incorporate these estimates and (\ref{mh4}) with (\ref{mh26}) to obtain
\begin{align*}
\|\p_t(t\partial_t^\varepsilon \tilde p_u)\|&\leq Qt^{\alpha_0-\varepsilon-1}\|\Delta f(0)\|+Q\beta_{\alpha_0-\varepsilon}*\|\p_t\Delta f\|\\
&\hspace{-0.65in}+Qt^{\alpha_0-\varepsilon}\|(\p_t\Delta f)(0)\|+Qt(\beta_{\alpha_0-\varepsilon}*\|\p_t^2\Delta f\|)+Qt^{-2\varepsilon}*\|\p_t \Delta u\|+Qt^{-2\varepsilon}*\|t\p_t^2 \Delta u\|\\
&\leq Q(1+t^{\alpha_0-\varepsilon-1})\|\Delta f\|_{W^{2,\frac{1}{\alpha_0-\sigma}}(L^2)}+QM+Qt^{-2\varepsilon}*\|t\p_t^2 \Delta u\|  
\end{align*}
for a.e. $t\in (0,T]$ where we used the Young's convolution inequality to bound  $\beta_{\alpha_0-\varepsilon}*\|\p_t^2\Delta f\|$ as (\ref{zxc4}).
We invoke this and (\ref{Well:e4}) to bound $J_9$ as
\begin{align*}
\|J_9\| &\leq Qt^{\alpha_0-1}\|\Delta f\|_{W^{2,\frac{1}{\alpha_0-\sigma}}(L^2)}+QM+Qt^{-\varepsilon}*\|t\p_t^2 \Delta u\| .
\end{align*}
We invoke the estimates of $J_1$--$J_9$ in (\ref{ttt}) to conclude that
\begin{align*}
\|t\p^2_t\Delta u\|\leq Q\tilde Mt^{\alpha_0-1}+Qt^{-\varepsilon}*\|t\p_t^2 \Delta u\|, ~a.e.~t\in (0,T].
\end{align*}
Then we follow exactly the same procedure as (\ref{zxc2})--(\ref{zxc3}) to complete the proof of this theorem.
\end{proof}
\subsection{An inverse problem}
From previous results, we notice that the initial value $\alpha_0$ plays a critical role in determining the properties of the model. Thus, it is meaningful to determine $\alpha_0$ from observations of $u$, which formulates an inverse problem. Here we follow the proof of \cite[Theorem 6.31]{Jinbook} to present a preliminary result for demonstrating the usage of equivalent formulations derived by the generalized identity function.  
\begin{theorem}
For (\ref{VtFDEs})--(\ref{ibc}) with $f\equiv 0$, $u_0\in C^\infty_0$ and $\Delta u_0(x_0)\neq 0$ for some $x_0\in\Omega$, we have
$$\alpha_0=\lim_{t\rightarrow 0^+} \frac{t\p_tu(x_0,t)}{u(x_0,t)-u_0(x_0)}. $$  
\end{theorem}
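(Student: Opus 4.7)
The plan is to exploit the representation (\ref{repu}), which by Theorem \ref{thm:well} coincides with the unique solution of (\ref{VtFDEs})--(\ref{ibc}), in order to expose the universal $t^{\alpha_0}$ leading behaviour at $x_0$ and to show that the variable-exponent correction is strictly of lower order. With $f\equiv 0$, (\ref{repu}) reduces to
\begin{equation*}
u(t) = F(t)u_0 - \int_0^t F(t-s)\,(g'*\p_s u)(s)\,ds,
\end{equation*}
since $p_u=-g'*\p_t u$. The argument splits into (i) an explicit two-term asymptotic expansion of $F(t)u_0$ with a quantitative remainder, and (ii) a proof that the convolution tail and its time derivative (multiplied by $t$) are $o(t^{\alpha_0})$ in a norm strong enough to permit pointwise evaluation at $x_0$.

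For step (i), I would apply the resolvent identity $(z^{\alpha_0}-\Delta)^{-1}=z^{-\alpha_0}+z^{-\alpha_0}\Delta(z^{\alpha_0}-\Delta)^{-1}$ twice inside the contour definition of $F(t)$, using $\frac{1}{2\pi\rm i}\int_{\Gamma_\theta} e^{tz}z^{-1}dz=1$ and $\frac{1}{2\pi\rm i}\int_{\Gamma_\theta} e^{tz}z^{-1-\alpha_0}dz=t^{\alpha_0}/\Gamma(1+\alpha_0)$ to extract the first two terms, and then bounding the leftover with the resolvent estimate in (\ref{GammaEstimate}). This yields
\begin{equation*}
F(t)u_0 = u_0 + \frac{t^{\alpha_0}}{\Gamma(1+\alpha_0)}\Delta u_0 + R_1(t)u_0,\qquad \|R_1(t)u_0\|_{\check H^{2m}} \le Q t^{2\alpha_0}\|u_0\|_{\check H^{2m+4}}.
\end{equation*}
Combining $\p_t F(t)=\Delta E(t)$ from \cite[Lemma 6.2]{Jinbook} with the same expansion gives
\begin{equation*}
t\,\p_t F(t)u_0 = \frac{t^{\alpha_0}}{\Gamma(\alpha_0)}\Delta u_0 + R_2(t)u_0,\qquad \|R_2(t)u_0\|_{\check H^{2m}} \le Q t^{2\alpha_0}\|u_0\|_{\check H^{2m+4}}.
\end{equation*}

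For step (ii), I would invoke Theorem \ref{thm:reg} to get $\|\p_t u\|_{\check H^{2m}}\le Qt^{\alpha_0-1}$ for $m=0,1$, and combine it with $|g'(t)|\le Q(|\ln t|+1)$ from (\ref{g'}) to deduce $\|(g'*\p_s u)(t)\|_{\check H^{2m}}\le Qt^{\alpha_0}(|\ln t|+1)$. Since $\|F(t)\|$ is uniformly bounded on $[0,T]$ by (\ref{GammaEstimate}), the convolution integral itself is $O(t^{\alpha_0+1}|\ln t|)=o(t^{\alpha_0})$ in $\check H^{2m}$. For its time derivative I would use the splitting (\ref{Well:e3})--(\ref{Well:e4}) together with the bound (\ref{mh6}) on $\p_t^\varepsilon(g'*\p_t u)$ and Young's convolution inequality to obtain $\big\|\p_t\!\int_0^t F(t-s)(g'*\p_s u)(s)ds\big\|_{\check H^{2m}}\le Qt^{\alpha_0-\varepsilon}$ for an arbitrarily small $\varepsilon>0$, so that $t$ times this is still $o(t^{\alpha_0})$.

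Finally, choosing $m=1$ gives $\check H^{2}\hookrightarrow C(\overline\Omega)$ in dimensions $d\le 3$, so all preceding estimates transfer to pointwise values at $x_0$. With $\Delta u_0(x_0)\ne 0$ and letting $t\to 0^+$,
\begin{equation*}
\lim_{t\to 0^+}\frac{t\,\p_t u(x_0,t)}{u(x_0,t)-u_0(x_0)} = \frac{\Delta u_0(x_0)/\Gamma(\alpha_0)}{\Delta u_0(x_0)/\Gamma(1+\alpha_0)} = \alpha_0.
\end{equation*}
The main obstacle will be controlling the variable-exponent contribution $g'*\p_t u$ uniformly in space and showing it is strictly subdominant to the universal $t^{\alpha_0}$ term; the generalized identity function is decisive here, since $g(0)=1$ together with the mild logarithmic blow-up (\ref{g'}) of $g'$ guarantees that the variable-exponent character of $\alpha(t)$ enters only at the next order and cannot perturb the leading constant $\alpha_0$.
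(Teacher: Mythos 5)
Your proposal is correct and reaches the right conclusion, but by a genuinely different route from the paper. The paper also starts from the fact that $u$ solves the constant-order problem $^c\p_t^{\alpha_0}u-\Delta u=-\beta_{1-\alpha_0}*(g'*\p_t u)$, but it then writes $u=F(t)u_0+\mathcal H$ with the correction $\mathcal H$ expanded in eigenfunctions through the Mittag--Leffler kernels $\rho=t^{\alpha_0-1}E_{\alpha_0,\alpha_0}(-\lambda_j t^{\alpha_0})$; the leading asymptotics of $F(t)u_0$ are not re-derived but imported wholesale from \cite[Theorem 6.31]{Jinbook}, and the entire effort goes into the two limits $t^{1-\alpha_0}\p_t\mathcal H(x_0,t)\rightarrow 0$ and $t^{-\alpha_0}\mathcal H(x_0,t)\rightarrow 0$, proved pointwise via $\|\phi_j\|_{L^\infty}\leq Q\lambda_j^{\kappa/2}$, Weyl's law, and the additional regularity $\|\p_t\Delta^2u\|\leq Qt^{\alpha_0-1}$ (needed to make the eigenfunction series summable after integrating by parts twice in space). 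You instead extract the two-term expansions $F(t)u_0=u_0+\frac{t^{\alpha_0}}{\Gamma(1+\alpha_0)}\Delta u_0+O(t^{2\alpha_0})$ and $t\p_tF(t)u_0=\frac{t^{\alpha_0}}{\Gamma(\alpha_0)}\Delta u_0+O(t^{2\alpha_0})$ directly from the resolvent identity on the contour --- which is fine, though (\ref{GammaEstimate}) as stated only covers $0<\mu<1$, so you should note the choice $\delta\sim 1/t$ on $\Gamma_\theta$ to justify the $t^{2\alpha_0}$ remainder for the negative exponent --- and you control the correction through the Duhamel form with $F(t-s)$, the kernel $\mathcal R$ from (\ref{Well:e3})--(\ref{Well:e4}), and the embedding $\check H^2\hookrightarrow C(\overline\Omega)$ for $d\leq 3$. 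Your route is more self-contained on the leading term and only needs Theorem \ref{thm:reg} with $m=1$ rather than the fourth-order extension $\|\p_t\Delta^2u\|\leq Qt^{\alpha_0-1}$; the price is the stronger data requirement $u_0\in\check H^{6}$ for the remainders measured in $\check H^2$, which is harmless under $u_0\in C^\infty_0$. Both arguments ultimately rest on the same two facts: $g(0)=1$, so the constant-order operator captures the leading singularity exactly, and the mild bounds (\ref{g'})--(\ref{g''}) make $g'*\p_t u$ strictly subdominant to $t^{\alpha_0}$.
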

\begin{proof}
By Theorem \ref{thm:well} the model (\ref{VtFDEs})--(\ref{ibc}) is well-posed and could be reformulated as (\ref{Model2}). We then further calculate the convolution of (\ref{Model2}) with $f\equiv 0$ and $\beta_{1-\alpha_0}$ to get
\begin{equation}\label{ip1}
\beta_{1-\alpha_0}*(\p_tu)+\beta_{1-\alpha_0}*(g'*\p_t u)-  \Delta u=0,
\end{equation}
the solution of which could be expressed as
$u=F(t)u_0+\mathcal H $ where $\mathcal H$ could be expressed via the Mittag-Leffler function $E_{a,b}(\cdot)$ \cite{Gor}
\begin{align*}
\mathcal H:=\sum_{j=1}^\infty \rho*(-\beta_{1-\alpha_0}*(g'*\p_t u),\phi_j)\phi_j,~~\rho:=t^{\alpha_0-1}E_{\alpha_0,\alpha_0}(-\lambda_j t^{\alpha_0}).
\end{align*}
Also, (\ref{ip1}) implies $\Delta u|_{\p\Omega}=0$. Then one could follow the proof of Theorem \ref{thm:reg} to further show that $\|\p_t\Delta^2u\|\leq Qt^{\alpha_0-1}$, which will be used later.

By the derivations of \cite[Theorem 6.31]{Jinbook} we find that if we can show 
\begin{equation}\label{ip2}
\lim_{t\rightarrow 0^+}t^{1-\alpha_0}\p_t\mathcal H(x_0,t)=0\text{ and }\lim_{t\rightarrow 0^+}t^{-\alpha_0}\mathcal H(x_0,t)=0,
\end{equation}
 then the proof of  \cite[Theorem 6.31]{Jinbook} is not affected at all by the term $\mathcal H$ such that we immediately reach the conclusion by \cite[Theorem 6.31]{Jinbook}. 

To show the first statement, we first apply integration by parts to obtain 
\begin{align*}
\mathcal H=\sum_{j=1}^\infty \frac{1}{\lambda_j^2}\rho*(-\beta_{1-\alpha_0}*(g'*\p_t \Delta^2u),\phi_j)\phi_j,
\end{align*}
and we follow similar derivations as (\ref{mh3})--(\ref{mh4}) to obtain $|\p_t(\beta_{1-\alpha_0}*g')|\leq t^{-\alpha_0-\varepsilon}$ for $0<\epsilon\ll 1-\alpha_0$. We invoke this and apply $\|\phi_j\|_{L^\infty}\leq Q\lambda_j^{\frac{\kappa}{2}}$ for $\kappa>\frac{d}{2}$ \cite[Page 257]{Jinbook} to bound $\p_t\mathcal H$ as
\begin{align*}
|\p_t\mathcal H|&\leq Q\sum_{j=1}^\infty \frac{1}{\lambda_j^{2-\frac{\kappa}{2}}}\rho*\big(|\p_t(\beta_{1-\alpha_0}*g')|*\|\p_t \Delta^2u\|\big)\\
&\leq Q\sum_{j=1}^\infty \frac{1}{\lambda_j^{2-\frac{\kappa}{2}}}\rho*\big(t^{-\alpha_0-\varepsilon}*t^{\alpha_0-1}\big)\leq Q\sum_{j=1}^\infty \frac{1}{\lambda_j^{2-\frac{\kappa}{2}}}\rho*t^{-\varepsilon}.
\end{align*}
We combine this with the following estimates \cite{Gor}
$$\rho*t^{-\varepsilon}=\Gamma(\alpha_0)t^{\alpha_0-\varepsilon} E_{\alpha_0,1+\alpha_0-\varepsilon}(-\lambda_jt^{\alpha_0})\leq Q\frac{t^{\alpha_0-\varepsilon}}{1+\lambda_jt^{\alpha_0}}=Q\frac{t^{-\varepsilon}}{\lambda_j}\frac{\lambda_j t^{\alpha_0}}{1+\lambda_jt^{\alpha_0}}\leq Q\frac{t^{-\varepsilon}}{\lambda_j} $$
to obtain
\begin{align*}
t^{1-\alpha_0}|\p_t\mathcal H|&\leq Qt^{1-\alpha_0-\varepsilon}\sum_{j=1}^\infty \frac{1}{\lambda_j^{3-\frac{\kappa}{2}}}\leq Qt^{1-\alpha_0-\varepsilon}\sum_{j=1}^\infty \frac{1}{j^{\frac{2}{d}(3-\frac{\kappa}{2})}}
\end{align*}
where we used the Weyl's law $\lambda_j\sim j^{\frac{2}{d}}$ for $j\rightarrow \infty$. We set $\kappa=\frac{d}{2}+1$ to obtain $\frac{2}{d}(3-\frac{\kappa}{2})=\frac{5}{d}-\frac{1}{2}>1$ for $1\leq d\leq 3$. Thus we have $t^{1-\alpha_0}|\p_t\mathcal H|\leq Qt^{1-\alpha_0-\varepsilon}$, which implies the first statement in (\ref{ip2}). The second statement could be proved similarly and we thus omit the details.
\end{proof}
\section{Numerical approximation}
\subsection{A numerically-feasible formulation}
We turn the attention to numerical approximation. As the kernel $k(t)$ in (\ref{VtFDEs}) may not be positive definite or monotonic, it is not convenient to perform numerical analysis for numerical methods of model (\ref{VtFDEs}). Here a convolution kernel $\beta(t)$ is said to be positive definite if for each $0<\bar t\leq T$ 
$$\int_0^{\bar t}q(t)(\beta*q)(t)dt\geq 0,~~\forall q\in C[0,\bar t], $$
and $\beta_{\mu,\lambda}(t):=e^{-\lambda t}\beta_\mu(t)$ with $0<\mu<1$ and $\lambda\geq 0$ is a typical positive definite convolution kernel, see e.g. \cite{Mus}. 

To find a feasible model for numerical discretization, we recall that the convolution of (\ref{VtFDEs}) and $\beta_{\alpha_0}$ generates (\ref{modeln}), and we apply the integration by parts 
$g*\p_tu=u(t)-g(t)u_0+g'*u$
to rewrite (\ref{modeln}) as 
\begin{align}\label{Modeln}
u+g'*u-\beta_{\alpha_0}*  \Delta u  =gu_0+\beta_{\alpha_0}*f. 
\end{align}
It is clear that the solution $u\in W^{1,p}(L^2)\cap L^p(\check H^2)$ to model (\ref{VtFDEs})--(\ref{ibc}) solves (\ref{Modeln})--(\ref{ibc}). Then we intend to show that (\ref{Modeln})--(\ref{ibc}) has a unique solution in $ W^{1,p}(L^2)\cap L^p(\check H^2)$ such that both (\ref{VtFDEs})--(\ref{ibc}) and (\ref{Modeln})--(\ref{ibc}) have the same unique solution in $ W^{1,p}(L^2)\cap L^p(\check H^2)$. 

Suppose $u_1,u_2\in W^{1,p}(L^2)\cap L^p(\check H^2)$ solves (\ref{Modeln})--(\ref{ibc}), then $\varepsilon_u=u_1-u_2$ solves
$$
\varepsilon_u+g'*\varepsilon_u-\beta_{\alpha_0}*  \Delta \varepsilon_u =0
$$
with homogeneous initial and boundary conditions. As the first two terms belong to $W^{1,p}(L^2)$, so does the third term such that we differentiate this equation to get
$$
\p_t\varepsilon_u+g'*\p_t\varepsilon_u-\p_t^{1-\alpha_0}*\Delta \varepsilon_u =0,
$$
that is, $\varepsilon_u$ satisfies the transformed model (\ref{Model2}) with $f\equiv 0$ and homogeneous initial and boundary conditions. Then Theorem \ref{thm:aux} leads to $\|\varepsilon_u\|_{W^{1,p}(L^2)}=\|\varepsilon_u\|_{L^p(\check H^2)}=0$, which implies that it suffices to numerically solve (\ref{Modeln})--(\ref{ibc}) in order to get the solutions to the original model (\ref{VtFDEs})--(\ref{ibc}). Furthermore, one could apply the substitution $\tilde u= u-u_0$ to transfer the initial condition of (\ref{Modeln})--(\ref{ibc}) to $0$ in order to apply the positive-definite quadrature rules
\begin{align}\label{ModelN}
&\tilde u+g'*\tilde u-\beta_{\alpha_0}*  \Delta \tilde u  =\mathcal F:=\beta_{\alpha_0}*(\Delta u_0+f),~~(\bm x,t)\in \Omega\times (0,T];\\
 &\qquad\qquad\tilde u(0)=0,~\bm x\in \Omega; \quad \tilde u = 0,~(\bm x,t) \in \p \Omega\times[0,T].\label{ibc0}
\end{align}
For the sake of numerical analysis, we further multiply $e^{-\lambda t}$ on both sides of (\ref{ModelN}) to get an equivalent model with respect to $v=e^{-\lambda t}\tilde u$
\begin{align}\label{ModelNE}
&v+(e^{-\lambda t}g')*v-\beta_{\alpha_0,\lambda}*  \Delta v  =e^{-\lambda t}\mathcal F,~~(\bm x,t)\in \Omega\times (0,T];\\
 &\qquad ~v(0)=0,~\bm x\in \Omega; \quad v = 0,~(\bm x,t) \in \p \Omega\times[0,T].\label{ibc00}
\end{align}
Note that $v=e^{-\lambda t}(u-u_0)$ has the same regularity as $u$ and if we find the numerical solution to (\ref{ModelNE})--(\ref{ibc00}), we could immediately define the numerical solution to (\ref{ModelN})--(\ref{ibc0}) by the relation $u=e^{\lambda t}v+u_0$ as shown later.
\subsection{Semi-discrete scheme and error estimate}
Define a quasi-uniform partition of $\Omega$ with mesh diameter $h$ and let $S_h$ be the space of continuous and piecewise linear functions on $\Omega$ with respect to the partition. Let $I$ be the identity operator. The Ritz projection $\Pi_h:H^1_0\rightarrow S_h$ defined by
$\big(\nabla(q-\Pi_h q),\nabla \chi\big)=0$ for any $ \chi\in S_h$ has the approximation property
$
\big \| (I - \Pi_h) q \big \| \leq Q h^2 \|q\|_{H^2}$ for $q\in H^2\cap H_0^1$ \cite{Tho}.

The weak formulation of (\ref{ModelNE})--(\ref{ibc00}) reads for any $\chi\in H^1_0$
\begin{align}\label{wf0}
(v,\chi)+\big((e^{-\lambda t}g')*v,\chi)+(\beta_{\alpha_0,\lambda}*  \nabla v,\nabla \chi)  =(e^{-\lambda t}\mathcal F,\chi),
\end{align}
and the corresponding semi-discrete finite element scheme is: find $v_h\in S_h$ such that
\begin{align}\label{sd0}
(v_h,\chi)+\big((e^{-\lambda t}g')*v_h,\chi)+(\beta_{\alpha_0,\lambda}*  \nabla v_h,\nabla \chi)  =(e^{-\lambda t}\mathcal F,\chi),~~\forall \chi\in S_h.
\end{align}
After obtaining $v_h$, define the approximation $u_h$ to the solution of (\ref{ModelN})--(\ref{ibc0}) as 
$$u_h=e^{\lambda t}v_h+\Pi_hu_0.$$

\begin{theorem}
Suppose $u_0\in \check H^{4}$ and $f\in W^{1,\frac{1}{\alpha_0-\sigma}}(\check H^{2})$ for $0<\sigma\ll \alpha_0$. Then the following error estimate holds
 \begin{align*}
&\|u-u_h\|_{L^2(L^2)}\leq Qh^2.
\end{align*}
\end{theorem}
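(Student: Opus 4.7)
The plan is to exploit the relation $u - u_h = e^{\lambda t}(v - v_h) + (u_0 - \Pi_h u_0)$, which follows from the definitions $u = e^{\lambda t}v + u_0$ and $u_h = e^{\lambda t}v_h + \Pi_h u_0$. The second piece is immediately bounded by $Qh^2 \|u_0\|_{\check H^2}$ via the standard Ritz projection estimate, so the task reduces to proving $\|v - v_h\|_{L^2(L^2)} \le Qh^2$. For this I would use the standard splitting $v - v_h = (v - \Pi_h v) + (\Pi_h v - v_h) =: \eta + \xi$. The approximation piece $\eta$ satisfies $\|\eta(t)\| \le Qh^2\|v(t)\|_{H^2}$, and I would invoke Theorem \ref{thm:reg} with $m=1$ together with $v = e^{-\lambda t}(u - u_0)$ to bound $\|v\|_{L^2(\check H^2)}$ in terms of the data; note $\|\p_t u\|_{\check H^2} \le Qt^{\alpha_0-1}(\ldots)$ is integrable in time since $\alpha_0 > 0$, so $u \in C([0,T]; \check H^2)$ and hence $v \in L^2(0,T; \check H^2)$.

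For $\xi$, subtracting \eqref{sd0} from \eqref{wf0} and using the defining property of the Ritz projection $(\beta_{\alpha_0,\lambda} * \nabla \eta, \nabla \chi) = 0$ yields, for every $\chi \in S_h$,
\begin{align*}
(\xi,\chi) + ((e^{-\lambda t}g')*\xi,\chi) + (\beta_{\alpha_0,\lambda}*\nabla \xi,\nabla \chi) = -(\eta,\chi) - ((e^{-\lambda t}g')*\eta,\chi).
\end{align*}
Setting $\chi = \xi \in S_h$ and integrating over $(0,\bar t)$, the crucial observation is that $\beta_{\alpha_0,\lambda}$ is a positive definite kernel (as recalled at the start of Section~4.1), so $\int_0^{\bar t}(\beta_{\alpha_0,\lambda}*\nabla\xi,\nabla\xi)\,dt \ge 0$ and that term is simply discarded. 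What remains is an inequality of the form
\begin{align*}
\|\xi\|_{L^2(0,\bar t;L^2)}^2 \le |I_1| + |I_2| + |I_3|,
\end{align*}
where $I_1 = -\int(\eta,\xi)$, $I_2 = -\int((e^{-\lambda t}g')*\xi,\xi)$, and $I_3 = -\int((e^{-\lambda t}g')*\eta,\xi)$.

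The two convolution terms $I_2, I_3$ are handled by Cauchy--Schwarz in time followed by Young's convolution inequality, which produces the weight $\|e^{-\lambda t}g'\|_{L^1(0,T)}$. Since $|g'(t)| \le Q(|\ln t|+1)$ by \eqref{g'}, a short calculation gives $\|e^{-\lambda t}g'\|_{L^1(0,T)} \le Q(1+\ln\lambda)/\lambda \to 0$ as $\lambda \to \infty$. Therefore, for $\lambda$ sufficiently large, the $\|\xi\|^2$ contribution coming from $I_2$ can be absorbed into the left-hand side, and the remainder combines with $I_1$ and $I_3$ via standard $\varepsilon$-inequalities to give $\|\xi\|_{L^2(0,\bar t;L^2)} \le Q\|\eta\|_{L^2(0,\bar t;L^2)}$. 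Setting $\bar t = T$ and combining with the Ritz estimate on $\eta$ and the initial projection error yields the claim.

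The main obstacle I foresee is ensuring that the singular weights produced by the variable-exponent machinery do not contaminate the analysis. Specifically, $g'$ is only logarithmically singular at $t=0$, so multiplying by $e^{-\lambda t}$ still gives an $L^1$ kernel whose norm decays in $\lambda$; this is the reason the exponential rescaling \eqref{ModelNE} was introduced. A secondary subtlety is the $L^2(0,T;\check H^2)$ regularity of $v$, which requires feeding the pointwise bound $\|\p_t u\|_{\check H^2} \le Qt^{\alpha_0-1}M$ from Theorem \ref{thm:reg} through the identity $u(t) - u_0 = \int_0^t \p_s u\,ds$ — here the hypothesis $f \in W^{1,1/(\alpha_0-\sigma)}(\check H^2)$ enters exactly to make $M$ finite. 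Once these technicalities are in place, the argument mirrors the classical $L^2(L^2)$ finite element error analysis for parabolic problems.
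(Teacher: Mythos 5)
Your proposal is correct and follows essentially the same route as the paper: the splitting $v-v_h=\eta+\xi$ with the Ritz projection, testing the error equation with $\chi=\xi$, discarding the $(\beta_{\alpha_0,\lambda}*\nabla\xi,\nabla\xi)$ term by positive definiteness after integrating in time, bounding $\|\eta\|\le Qh^2$ via the pointwise regularity of Theorem \ref{thm:reg}, and absorbing the $(e^{-\lambda t}g')*\xi$ contribution for $\lambda$ large using the smallness of $\|e^{-\lambda t}g'\|_{L^1(0,T)}$. The only cosmetic differences are your bound $Q(1+\ln\lambda)/\lambda$ versus the paper's $Q\lambda^{\varepsilon-1}$ for that $L^1$ norm, both of which suffice.
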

\begin{proof}
Set $v-v_h=\xi+\eta$ with $\eta=v-\Pi_h v$ and $\xi=\Pi_h v-v_h$. Then we subtract (\ref{wf0}) from (\ref{sd0}) and select $\chi=\xi$ to obtain
\begin{align*}
(\xi,\xi)+\big((e^{-\lambda t}g')*\xi,\xi)+(\beta_{\alpha_0,\lambda}*  \nabla \xi,\nabla \xi)  =-(\eta,\xi)-\big((e^{-\lambda t}g')*\eta,\xi),
\end{align*}
which, together with $|ab|\leq a^2+\frac{b^2}{4}$, implies
\begin{align*}
\|\xi\|^2+4(\beta_{\alpha_0,\lambda}*  \nabla \xi,\nabla \xi)  \leq 4\|(e^{-\lambda t}g')*\xi\|^2 +4\|\eta\|^2+4\|(e^{-\lambda t}g')*\eta\|^2.
\end{align*}
Integrate this equation on $ (0,T)$ and apply 
the positive definiteness of $\beta_{\alpha_0,\lambda}$
 to obtain
\begin{align}\label{sd2}
\|\xi\|^2_{L^2(L^2)}  \leq 4\|(e^{-\lambda t}g')*\xi\|^2_{L^2(L^2)} +4\|\eta\|^2_{L^2(L^2)}+4\|(e^{-\lambda t}g')*\eta\|^2_{L^2(L^2)}.
\end{align}
By Theorem \ref{thm:reg} we have
$$\|u\|_{\check H^2}=\|\Delta u\|\leq \|\Delta u_0\|+Q\int_0^t\|\p_s\Delta u(s)\|ds\leq \|\Delta u_0\|+Q\int_0^ts^{\alpha_0-1}ds\leq Q. $$
Thus, $\|v\|_{\check H^2}\leq Q$ such that 
\begin{align}\label{etaest}
\|\eta\|\leq Qh^2,
\end{align}
 which, together with (\ref{sd2}), leads to
\begin{align*}
\|\xi\|^2_{L^2(L^2)}  \leq 4\|(e^{-\lambda t}g')*\xi\|^2_{L^2(L^2)} +Qh^4.
\end{align*}
We bound the first right-hand side term as
\begin{align*}
\|(e^{-\lambda t}g')*\xi\|^2_{L^2(L^2)} &\leq  Q\big(\| |e^{-\lambda t}g'|*\|\xi\|\|_{L^2(0,T)}\big)^2\leq Q\|e^{-\lambda t}g'\|^2_{L^1(0,T)}\|\xi\|^2_{L^2(L^2)}.
\end{align*}
By (\ref{g'}) and a similar argument as (\ref{zxc8}), we have $$\|e^{-\lambda t}g'\|_{L^1(0,T)}\leq Q\|e^{-\lambda t}t^{-\varepsilon}\|_{L^1(0,T)}\leq Q\lambda^{\varepsilon-1}$$
 for $0<\varepsilon\ll 1$. We combine the above three estimates and set $\lambda$ large enough to get $\|\xi\|_{L^2(L^2)}  \leq Qh^2$ and thus $\|v-v_h\|_{L^2(L^2)}  \leq Qh^2$. We finally apply $u=e^{\lambda t}v+u_0$ and $u_h=e^{\lambda t}v_h+\Pi_h u_0$ to complete the proof.
\end{proof}

\subsection{Fully-discrete scheme and error estimate}
Partition $[0,T]$ by $t_n := n\tau$ for $\tau := T/N$ and $0\leq n\leq N$. Let $I_\tau$ be the piecewise linear interpolation operator with respect to this partition. 
We approximate the convolution terms in (\ref{ModelNE}) by the quadrature rule proposed in, e.g. \cite[Equation (4.16)]{MclTho} 
\begin{align*}
((e^{-\lambda t}g')*v)(t_n)&=((e^{-\lambda t}g')*I_\tau v)(t_n)+R_n=C_n(v)+R_n;\\
(-\beta_{\alpha_0,\lambda}*  \Delta v)(t_n) &=(\beta_{\alpha_0,\lambda}*I_\tau(-  \Delta v))(t_n)+\tilde R_n=\tilde C_n(-\Delta v)+\tilde R_n,
\end{align*}
where 
\begin{align*}
C_n(v)&:=\sum_{i=1}^n\kappa_{n,i}v(t_i),~~\kappa_{n,i}:=\frac{1}{\tau}\int_{t_{n-1}}^{t_n}\int_{t_{i-1}}^{\min\{t_i,t\}}e^{-\lambda (t-s)}g'(t-s)dsdt;\\
\tilde C_n(-\Delta v)&:=\sum_{i=1}^n\tilde \kappa_{n,i}(-\Delta v(t_i)),~~\tilde \kappa_{n,i}:=\frac{1}{\tau}\int_{t_{n-1}}^{t_n}\int_{t_{i-1}}^{\min\{t_i,t\}}\beta_{\alpha_0,\lambda}(t-s)dsdt
\end{align*}
with truncation errors $R_n$ and $\tilde R_n$, respectively. 
Note that $\kappa_{n,i}$ is indeed a function of $n-i$. To see this, direct calculations yield
\begin{align*}
\kappa_{n,i}&=\frac{1}{\tau}\int_{t_{n-1}}^{t_n}\int^{t-t_{i-1}}_{t-\min\{t_i,t\}}e^{-\lambda y}g'(y)dydt=\frac{1}{\tau}\int_{-\tau}^{0}\int^{t_n+z-t_{i-1}}_{t_n+z-\min\{t_i,t_n+z\}}e^{-\lambda y}g'(y)dydz.
\end{align*}
It is clear that $t_n-t_{i-1}=(n-i+1)\tau$, and $t_n+z-\min\{t_i,t_n+z\}=(n-i)\tau+z$ for $i\leq n-1$ and  $t_n+z-\min\{t_i,t_n+z\}=0$ for $i=n$. Thus we have $\kappa_{n,i}=\bar\kappa_{n-i}$ where $\bar\kappa_m$ for $0\leq m\leq N-1$ is defined as
\begin{align}\label{kap1}
\bar\kappa_m=\frac{1}{\tau}\int_{-\tau}^{0}\int^{(m+1)\tau+z}_{m\tau+z}e^{-\lambda y}g'(y)dydz,~~1\leq m\leq N-1
\end{align}
and
\begin{align}\label{kap2}
\bar\kappa_0=\frac{1}{\tau}\int_{-\tau}^{0}\int^{\tau+z}_{0}e^{-\lambda y}g'(y)dydz.
\end{align}
It is shown in \cite[Lemma 4.8]{MclTho} that the quadrature rule  $\tilde C_n$ is weakly positive such that, since $v(0)=0$, the following relation holds
\begin{align}\label{pd}
\sum_{n=1}^{n^*}\tilde C_n(\Phi)\Phi_n\geq 0,~~\text{ for }1\leq n^*\leq N
\end{align}
where $\Phi=\{\Phi_1,\cdots,\Phi_N\}$. Furthermore, \cite[Lemma 4.8]{MclTho} gives the estimates of the truncation errors as follows
\begin{align}
|R_n|&\leq 2\mu_{n-1}\int_0^\tau|\partial_t v|dt+\tau\sum_{j=2}^n\mu_{n-j}\int_{t_{j-1}}^{t_j}|\partial _t^2v|dt;\nonumber\\
|\tilde R_n|&\leq 2\tilde \mu_{n-1}\int_0^\tau|\partial_t\Delta v|dt+\tau\sum_{j=2}^n\tilde \mu_{n-j}\int_{t_{j-1}}^{t_j}|\partial _t^2\Delta v|dt;\label{tRn}\\
\mu_j&:=\int_{t_j}^{t_{j+1}}|e^{-\lambda s}g'(s)|ds,~~\tilde \mu_j:=\int_{t_j}^{t_{j+1}}|\beta_{\alpha_0,\lambda}(s)|ds.\nonumber
\end{align}
Thus the weak formulation of (\ref{ModelNE})--(\ref{ibc00}) reads for any $\chi\in H^1_0$
 \begin{align}\label{wf}
(v_n,\chi)+(C_n(v),\chi) +(\tilde C_n(\nabla v),\nabla \chi) =(e^{-\lambda t_n}\mathcal F_n,\chi)-(R_n+\tilde R_n,\chi),
\end{align}
and the corresponding fully-discrete finite element method reads: find $V_n\in S_h$ for $1\leq n\leq N$ such that
 \begin{align}\label{fd}
(V_n,\chi)+(C_n(V),\chi) +(\tilde C_n(\nabla V),\nabla \chi) =(e^{-\lambda t_n}\mathcal F_n,\chi),~~\forall \chi\in S_h.
\end{align}
After solving this scheme for $V$, we define the numerical approximation for the solution $u$ of (\ref{ModelN})--(\ref{ibc0}) as 
\begin{align}\label{U}
U_n:=e^{\lambda t_n}V_n+\Pi_h u_0,~~1\leq n\leq N. 
\end{align}

Define the discrete-in-time $l^2$ norm as $\|\Phi\|^2_{l^2(0,t_n;L^2)}:=\tau\sum_{i=1}^n\|\Phi_i\|^2$. We prove error estimate of $u-U$ under the norm $\|\cdot\|_{l^2(0,T;L^2)}$ in the following theorem.
\begin{theorem}\label{thm:err}
Suppose $u_0\in \check H^{4}$ and $f\in W^{2,\frac{1}{\alpha_0-\sigma}}(\check H^{2})$ for $0<\sigma\ll \alpha_0$. Then the following error estimate holds
 \begin{align*}
&\|u-U\|_{l^2(0,T;L^2)}\leq Qh^2+Q\tau^{\frac{1}{2}+\frac{3}{2}\alpha_0}.
\end{align*}
\end{theorem}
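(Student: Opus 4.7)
I would follow a standard error-splitting argument combined with the weak positivity \eqref{pd} of the quadrature $\tilde C_n$. Set $v_n-V_n=\eta_n+\xi_n$ with Ritz error $\eta_n:=v_n-\Pi_h v_n$ and $\xi_n:=\Pi_h v_n-V_n\in S_h$. Subtracting \eqref{fd} from \eqref{wf} and using the Ritz orthogonality $(\nabla\eta(t_i),\nabla\chi)=0$ for every $\chi\in S_h$ (which annihilates $\tilde C_n(\nabla\eta)$), I obtain
\begin{equation*}
(\xi_n,\chi)+(C_n(\xi),\chi)+(\tilde C_n(\nabla\xi),\nabla\chi)=-(\eta_n,\chi)-(C_n(\eta),\chi)-(R_n+\tilde R_n,\chi).
\end{equation*}
Taking $\chi=\xi_n$, multiplying by $\tau$ and summing $n=1,\ldots,n^*\le N$, the diffusion-type convolution $\tau\sum_n(\tilde C_n(\nabla\xi),\nabla\xi_n)$ is nonnegative by \eqref{pd} and is discarded.

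\textbf{Low-order terms.} The convolution $\tau\sum_n(C_n(\xi),\xi_n)$ is controlled by a discrete Cauchy--Schwarz and Young's inequality together with the bound $\|e^{-\lambda t}g'\|_{L^1(0,T)}\le Q\lambda^{\varepsilon-1}$ already established in the semi-discrete proof; choosing $\lambda$ sufficiently large absorbs it into $\tau\sum_n\|\xi_n\|^2$. The Ritz error satisfies $\|\eta(t)\|\le Qh^2\|v(t)\|_{\check H^2}\le Qh^2$, since Theorem \ref{thm:reg} with $m=1$ combined with $\|u(t)\|_{\check H^2}\le\|u_0\|_{\check H^2}+\int_0^t\|\p_s u\|_{\check H^2}ds\le Q$ gives $\|v(t)\|_{\check H^2}\le Q$ uniformly in $t$. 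The term $(C_n(\eta),\xi_n)$ is treated identically. These two steps produce the $O(h^2)$ contribution to $\|\xi\|_{l^2(L^2)}$.

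\textbf{Truncation errors and assembly (main obstacle).} The decisive step is estimating $\tau\sum_n\|\tilde R_n\|^2$ from \eqref{tRn}, using the pointwise bounds $\|\p_t\Delta v(t)\|\le Qt^{\alpha_0-1}$ and $\|\p_t^2\Delta v(t)\|\le Qt^{\alpha_0-2}$ inherited from Theorems \ref{thm:reg}--\ref{thm:reg2} applied to $v=e^{-\lambda t}(u-u_0)$, together with $\tilde\mu_0\le Q\tau^{\alpha_0}$ and $\tilde\mu_j\le Q\tau t_j^{\alpha_0-1}$ for $j\ge 1$. The difficulty is that $\p_t^2\Delta v\sim t^{\alpha_0-2}$ is \emph{not} integrable at $0$: the worst contributions must be isolated into the $j=1$ and $j=n$ terms of the double sum in \eqref{tRn} and absorbed into the factor $\tilde\mu_0\sim\tau^{\alpha_0}$ rather than into a pointwise kernel of size $t^{\alpha_0-1}$. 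A careful two-index summation of the resulting discrete convolution of the form $\tau^3\sum_j t_{n-j}^{\alpha_0-1}t_{j-1}^{\alpha_0-2}$ is then expected to yield $\tau\sum_n\|\tilde R_n\|^2\le Q\tau^{1+3\alpha_0}$. The companion sum $\tau\sum_n\|R_n\|^2$ is of strictly smaller order because $|g'|$ carries only a logarithmic singularity by \eqref{g'}, so $\mu_j$ has no power-type weight. Collecting everything yields $\|v-V\|_{l^2(0,T;L^2)}\le Q(h^2+\tau^{1/2+3\alpha_0/2})$; the claimed bound on $\|u-U\|_{l^2(0,T;L^2)}$ then follows from the definition $u_n-U_n=e^{\lambda t_n}(v_n-V_n)+(I-\Pi_h)u_0$ together with the Ritz estimate on $u_0\in\check H^2$. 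The principal obstacle is the careful bookkeeping of this singular double summation, where a naive estimate would lose a factor of $\tau^{\alpha_0}$.
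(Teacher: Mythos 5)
Your proposal follows the paper's proof almost line for line: the same splitting $v_n-V_n=\eta_n+\xi_n$, the same use of the weak positivity \eqref{pd} to discard the diffusion convolution, the same absorption of the $C_n(\xi)$ term by taking $\lambda$ large (the paper does this through the Toeplitz structure $\kappa_{n,i}=\bar\kappa_{n-i}$, the discrete Young inequality and $\sum_i|\bar\kappa_i|\le Q\lambda^{-1/2}$), and the same regularity inputs from Theorems \ref{thm:reg} and \ref{thm:reg2}. The only place you diverge is the step you yourself flag as decisive, namely $\tau\sum_n\|\tilde R_n\|^2\le Q\tau^{1+3\alpha_0}$, which you leave as ``expected to yield'' after a two-index summation with the sharp weights $\tilde\mu_j\le Q\tau t_j^{\alpha_0-1}$. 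The paper avoids that bookkeeping with a cleaner interpolation: it writes $\tau\sum_n\|\tilde R_n\|^2\le \tau\big(\max_n\|\tilde R_n\|\big)\sum_n\|\tilde R_n\|$, gets $\max_n\|\tilde R_n\|\le Q\tau^{2\alpha_0}$ from the crude uniform bound $\tilde\mu_j\le Q\tau^{\alpha_0}$ (via $|t^{\alpha_0}-\bar t^{\alpha_0}|\le|t-\bar t|^{\alpha_0}$), and gets $\sum_n\|\tilde R_n\|\le Q\tau^{\alpha_0}$ by interchanging the order of the double sum in \eqref{tRn} and using $\sum_m\tilde\mu_m\le Q$ together with $\int_{t_1}^{T}t^{\alpha_0-2}dt\le Q\tau^{\alpha_0-1}$. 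Your sharper route should also close (and appears to give an even better power of $\tau$ if carried through), but as written the decisive estimate is asserted rather than proved; if you execute it, treat the $j=n$ term separately, since there $\tilde\mu_0\sim\tau^{\alpha_0}$ rather than $\tau t_0^{\alpha_0-1}$. Everything else, including the annihilation of $\tilde C_n(\nabla\eta)$ by Ritz orthogonality and the final passage from $v-V$ to $u-U$, matches the paper.
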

\begin{proof}
Denote $v_n-V_n=\xi_n+\eta_n$ with $\eta_n:=v_n-\Pi_h v_n$ and $\xi_n:=\Pi_h v_n-V_n$. We subtract (\ref{wf}) from (\ref{fd}) with $\chi=\xi_n$  to obtain
 \begin{align*}
&(\xi_n,\xi_n) +(\tilde C_n(\nabla \xi),\nabla \xi_n)\\
 &\qquad=-(\eta_n,\xi_n)-(C_n(\xi),\xi_n)-(C_n(\eta),\xi_n)-(R_n+\tilde R_n,\xi_n).
\end{align*}
We sum this equation multiplied by $\tau$ from $n=1$ to $n^*$ for some $n^*\leq N$ and apply (\ref{pd}) and $|ab|\leq \frac{1}{8}a^2+2b^2$ to obtain
 \begin{align}
&\|\xi\|^2_{l^2(0,t_{n^*};L^2)}\leq 2\|\eta_n\|^2_{l^2(0,t_{n^*};L^2)}+2\|C_n(\xi)\|^2_{l^2(0,t_{n^*};L^2)}\nonumber\\
 &\qquad+2\|C_n(\eta)\|^2_{l^2(0,t_{n^*};L^2)}+\frac{1}{2}\|\xi\|^2_{l^2(0,t_{n^*};L^2)}+2\|R_n+\tilde R_n\|^2_{l^2(0,t_{n^*};L^2)}.\label{err2}
\end{align}
Then we intend to bound the right-hand side terms. 
By (\ref{etaest}) we have
\begin{align}\label{usc}
\|C_n(\eta)\|\leq\sum_{i=1}^n|\kappa_{n,i}|\|\eta_i\|\leq Qh^2\frac{1}{\tau}\int_{t_{n-1}}^{t_n}\int_{0}^{t}e^{-\lambda (t-s)}|g'(t-s)|dsdt\leq Qh^2,
\end{align}
which leads to
$
\|\eta_n\|^2_{l^2(0,t_{n^*};L^2)}+\|C_n(\eta)\|^2_{l^2(0,t_{n^*};L^2)}\leq Qh^4.
$

By Theorems \ref{thm:reg} and \ref{thm:reg2} and $v=e^{-\lambda t}u$, we have $\|\Delta^m \p_tv\|\leq Qt^{\alpha_0-1}$ and $\|\Delta^m \p^2_tv\|\leq Qt^{\alpha_0-2}$ for $m=0,1$. Also, the fact that $|t^{\alpha_0}-\bar t^{\alpha_0}|\leq |t-\bar t|^{\alpha_0}$ for $t,\bar t\in [0,T]$ implies $\tilde \mu_n\leq Q\tau^{\alpha_0}$. We apply them  to bound $\|\tilde R_n\|$ by (\ref{tRn})
\begin{align}
\|\tilde R_n\|&\leq Q\tilde \mu_{n-1}\int_0^\tau t^{\alpha_0-1}dt+Q\tau\sum_{j=2}^n\tilde \mu_{n-j}\int_{t_{j-1}}^{t_j}t^{\alpha_0-2}dt\label{er3}\\
&\leq Q\tau^{2\alpha_0}+Q\tau^{1+\alpha_0}\int_{t_{1}}^{t_n}t^{\alpha_0-2}dt\leq Q\tau^{2\alpha_0}+Q\tau^{1+\alpha_0}t_1^{\alpha_0-1}\leq Q\tau^{2\alpha_0}.\label{er4}
\end{align}
Then we invoke both (\ref{er3}) and (\ref{er4}) to bound $\|\tilde R_n\|^2_{l^2(0,t_{n^*};L^2)}$ as
\begin{align*}
\|\tilde R_n\|^2_{l^2(0,t_{n^*};L^2)}&\leq \tau\sum_{n=1}^{n_*}\|\tilde R_n\|^2\leq Q\tau^{1+2\alpha_0}\sum_{n=1}^{n_*}\|\tilde R_n\|\\
&\hspace{-0.5in}\leq Q\tau^{1+2\alpha_0}\Big[\sum_{n=1}^{n_*}\tilde \mu_{n-1}\int_0^\tau t^{\alpha_0-1}dt+\tau\sum_{n=2}^{n_*}\sum_{j=2}^n\tilde \mu_{n-j}\int_{t_{j-1}}^{t_j}t^{\alpha_0-2}dt\Big]\\
&\hspace{-0.5in}\leq Q\tau^{1+2\alpha_0}\Big[\tau^{\alpha_0}+\tau\int_{t_1}^{t_{n_*}}t^{\alpha_0-2}dt\Big]\leq Q\tau^{1+3\alpha_0}.
\end{align*}
The $\|R_n\|^2_{l^2(0,t_{n^*};L^2)}$ could be estimated following exactly the same procedure with the bound dominated by $Q\tau^{1+3\alpha_0}$.
We invoke the above estimates in (\ref{err2}) to get
 \begin{align*}
&\|\xi\|^2_{l^2(0,t_{n^*};L^2)}\leq Qh^4+Q\tau^{1+3\alpha_0}+4\|C_n(\xi)\|^2_{l^2(0,t_{n^*};L^2)}.
\end{align*}
By $\kappa_{n,i}=\bar\kappa_{n-i}$ with the definition of $\bar\kappa_m$ in (\ref{kap1})--(\ref{kap2}), the discrete Young's convolution inequality and a similar estimate as (\ref{usc}), we have
\begin{align*}
\|C_n(\xi)\|^2_{l^2(0,t_{n^*};L^2)}&\leq Q\tau\sum_{n=1}^{n^*}\Big(\sum_{i=1}^n|\bar\kappa_{n-i}|\|\xi_i\|\Big)^2\\
&\hspace{-0.5in}\leq Q\tau\sum_{n=1}^{n^*}\sum_{i=1}^n|\bar\kappa_{n-i}|\sum_{i=1}^n|\bar\kappa_{n-i}|\|\xi_i\|^2\leq Q\tau\sum_{n=1}^{n^*}\sum_{i=1}^n|\bar\kappa_{n-i}|\|\xi_i\|^2\\
&\hspace{-0.5in}\leq Q\tau\sum_{i=0}^{N-1}|\bar\kappa_{i}|\sum_{i=1}^{n^*}\|\xi_i\|^2\leq Q\|\xi\|^2_{l^2(0,t_{n^*};L^2)}\sum_{i=0}^{N-1}|\bar\kappa_{i}|.
\end{align*}
By (\ref{kap1})--(\ref{kap2}) and (\ref{g'}), we have
\begin{align*}
\sum_{i=0}^{N-1}|\bar\kappa_{i}|\leq \frac{1}{\tau}\int_{-\tau}^{0}\int^{N\tau+z}_{0}e^{-\lambda y}|g'(y)|dydz\leq Q\int_0^Te^{-\lambda y}y^{-\frac{1}{2}}dy\leq Q\lambda^{-\frac{1}{2}}.
\end{align*}
Combining the above three equations lead to
 \begin{align*}
&\|\xi\|^2_{l^2(0,t_{n^*};L^2)}\leq Qh^4+Q\tau^{1+3\alpha_0}+Q\lambda^{-\frac{1}{2}}\|\xi\|^2_{l^2(0,t_{n^*};L^2)}.
\end{align*}
Setting $\lambda$ large enough and choosing $n^*=N$ yield
 \begin{align*}
&\|\xi\|_{l^2(0,T;L^2)}\leq Qh^2+Q\tau^{\frac{1}{2}+\frac{3}{2}\alpha_0}.
\end{align*}
We combine this with $\|\eta_n\|\leq Qh^2$ to get
 \begin{align*}
&\|v-V\|_{l^2(0,T;L^2)}\leq Qh^2+Q\tau^{\frac{1}{2}+\frac{3}{2}\alpha_0},
\end{align*}
and we apply this and $u=e^{\lambda t}v+u_0$ and (\ref{U}) to complete the proof.
\end{proof}

\subsection{Numerical experiments}
\subsubsection{Initial singularity of solutions}
Let $\Omega=(0,1)^2$, $ \alpha(t) = \alpha_0 + 0.1 t$, $u_0=\sin(\pi x)\sin(\pi y)$ and $f=x(1-x)y(1-y)$. As we focus on the initial behavior of the solutions, we take a small terminal time $T=0.1$. We use the uniform spatial partition with the mesh size $h=1/64$ and a very fine temporal mesh size $\tau=1/3000$ to capture the singular behavior of the solutions near $t=0$.
Numerical solutions $U_n(\frac{1}{2},\frac{1}{2})$ defined by  (\ref{U}) is presented in Figure \ref{ComL2} (left) for three cases: 
\begin{equation}\label{aaa}
(\text{i}) ~\alpha_0=0.4; ~(\text{ii}) ~\alpha_0=0.6;~ (\text{iii}) ~\alpha_0=0.8.
\end{equation}
    We find that as $\alpha_0$ decreases, the solutions change more rapidly, indicating a stronger singularity that is consistent with the analysis result  $\p_tu\sim t^{\alpha_0-1}$ in Theorem \ref{thm:reg}.

\subsubsection{Numerical accuracy}  
Let $\Omega=(0,1)^2$, $T=1$ and the exact solution 
    $ u(x) = (1+t^{\alpha_0})\sin(\pi x) \sin (\pi y). $
   The source term $f$ is evaluated accordingly and we select
    $ \alpha(t) = \alpha_0+ 0.1 \sin(2\pi t).$
As the spatial discretization is standard, we only investigate the temporal accuracy of the numerical solution defined by (\ref{U}). We use the uniform spatial partition with the mesh size $h=1/256$ and
 present log-log plots of errors under the $\|\cdot\|_{l^2(0,T;L^2)}$ norm in Figure \ref{ComL2} (right) for different $\alpha_0$ in (\ref{aaa}), which indicates the convergence order of $\frac{1}{2}+\frac{3}{2}\alpha_0$ as proved in Theorem \ref{thm:err}.

\begin{figure}[!hbt]
			\centering
			\begin{minipage}[t]{0.5\linewidth}
				\centering
				\includegraphics[width=2.5in]{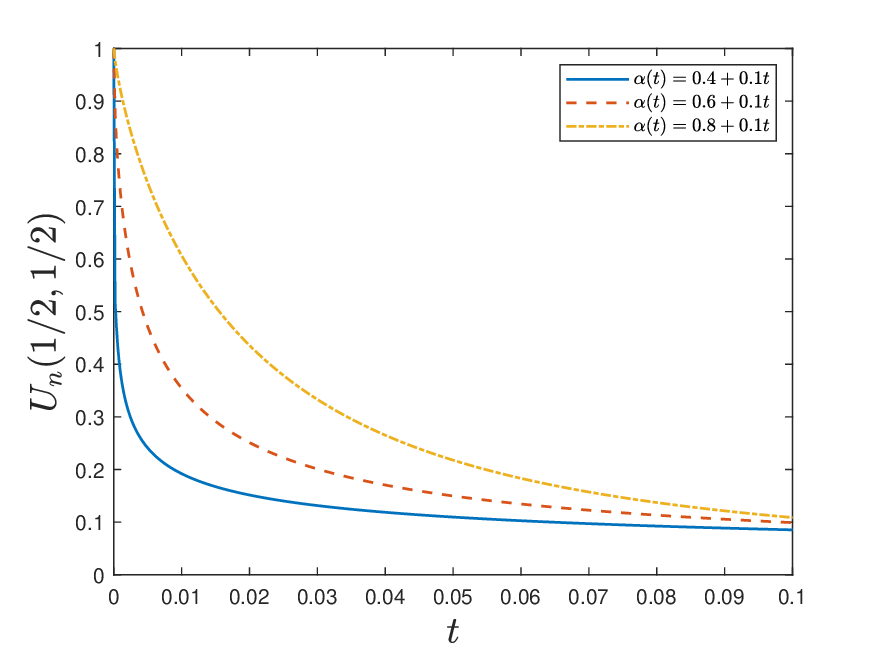}
			\end{minipage}%
			\begin{minipage}[t]{0.5\linewidth}
				\centering
				\includegraphics[width=2.5in]{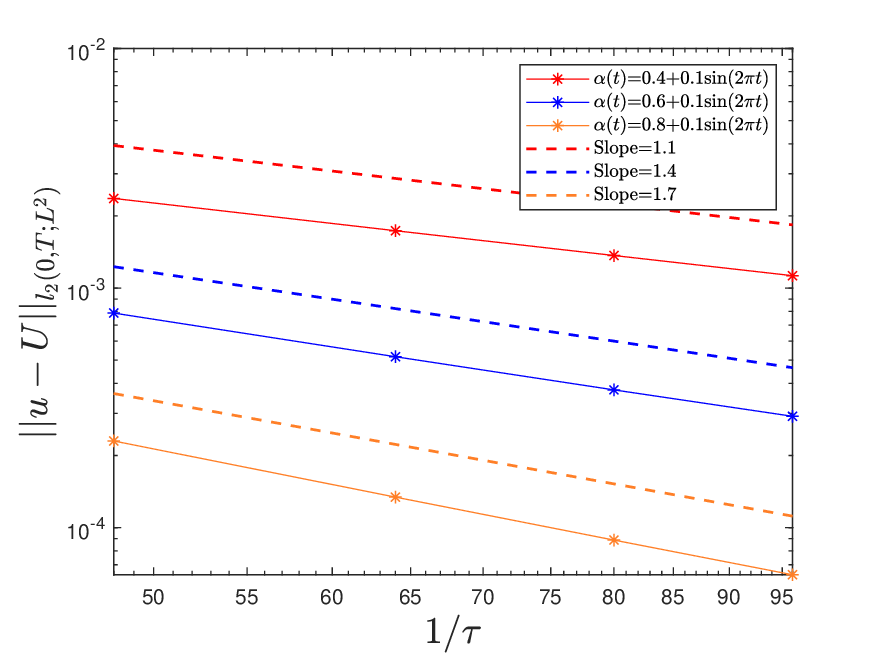}
			\end{minipage}%
			\caption{(left) Plots of $U_n$ at $(\frac{1}{2},\frac{1}{2})$ under different $\alpha_0$; (right) Errors under different $\alpha_0$ and $\tau$.  }
			\label{ComL2}
		\end{figure}

\section{Perturbation method}\label{sec4}
\subsection{Motivation}\label{secc51}
Despite the applicability of the convolution method to fractional initial value problems with variable exponent, it has some restrictions. Below we give two examples that the convolution method does not apply, which motivates an alternative method to analyze these problems.

\textbf{Example 1.}
Consider a two-sided space-fractional diffusion-advection-reaction equation with variable exponent $\alpha(x)\in (1,2)$ \cite{PanPer,SonKar}, which is a variable-exponent extension of the standard space-fractional boundary value problem proposed in \cite{Erv}
\begin{align}\label{bvp}
&-\p_x \big(rI_x^{2-\alpha(x)}+(1-r)\hat I_x^{2-\alpha(x)}\big)\p_x u\\
&\qquad\qquad\qquad+b(x)\p_xu+c(x)u=f(x),~~x\in \Omega:=(0,1);\nonumber\\
&\qquad\qquad u(0)=u(1)=0,\label{bvp2}
\end{align}
where the left and right fractional integrals of variable exponent are defined as
$$ I_x^{2-\alpha(x)}q:=\int_0^x\frac{(x-s)^{1-\alpha(x-s)}}{\Gamma(2-\alpha(x-s))}q(s)ds,~~\hat I_x^{2-\alpha(x)}q:=\int_x^1\frac{(s-x)^{1-\alpha(1-(s-x))}}{\Gamma(2-\alpha(1-(s-x))}q(s)ds. $$
We focus on the two-sided case, i.e. $0<r<1$, since the one-sided case $r=0$ or $r=1$ is simpler.

It is worth mentioning that for $b(x)=c(x)\equiv 0$, $r=1/2$ and $\alpha(x)\equiv \alpha$ for some $1<\alpha<2$, it is shown in \cite[Lemma 2.3]{Aco} that the operator on the left-hand side of (\ref{bvp}) is indeed the fractional Laplacian operator of order $\alpha$. Thus, (\ref{bvp}) could also be viewed as a variable-exponent extension of the fractional Laplacian equation under $b(x)=c(x)\equiv 0$ and $r=1/2$.

Due to the impacts of $\alpha(x)$, it is difficult to apply analytical methods to study (\ref{bvp}), and the convolution method does not apply due to the existence of both left and right fractional integrals of variable exponent. To be specific, although one could employ the convolution method to convert the term $-\p_x I_x^{2-\alpha(x)}\p_xu$ to a more feasible form, the operator $-\p_x \hat I_x^{2-\alpha(x)}\p_x$ is accordingly transformed to a much more complicated Fredholm type operator, which hinders further analysis. 

\textbf{Example 2.} 
Consider a variable-exponent version of the distributed-order diffusion equation, which is firstly proposed in \cite{LorHar} and $\beta(t)$ could serve as a pointwise-in-time shift of the domain of the exponent  
 \begin{align}\label{dis}
&~~~\p_t^{[\mu,\beta]} u(\bm x,t)- \Delta u (\bm x,t)= f(\bm x,t) ,~~(\bm x,t) \in \Omega\times(0,T],\\
\label{3ibc}
& u(\bm x,0)=u_0(\bm x),~\bm x\in \Omega; \quad u(\bm x,t) = 0,~(\bm x,t) \in \p \Omega\times[0,T],
 \end{align}
where the distributed variable-exponent time-fractional derivative is defined as
\begin{align*}
\p_t^{[\mu,\beta]} u:= \p_t (k_{\mu,\beta}*(u-u_0)),~~k_{\mu,\beta}(t):=\int_0^1\frac{t^{-\alpha+\beta(t)}}{\Gamma(1-\alpha+\beta(t))}\mu(\alpha)d\alpha
\end{align*}
for some non-negative continuous function $\mu(\alpha)$ satisfying 
$\text{supp}\,\mu=[b_1,b_2]$
 with $0\leq b_1<b_2<1$ and $\beta(t)$ satisfying $b_2-1+\ve_0\leq  \beta\leq b_1$ over $[0,T]$ for some $0<\ve_0\ll 1$. Without loss of generality, we assume $\beta(0)=0$. Otherwise, one could apply the variable substitution to transfer the initial value of $\beta(t)$ to $0$. Note that when $\beta(t)\equiv 0$, (\ref{dis})--(\ref{3ibc}) is a standard distributed-order diffusion equation. Furthermore, the conditions on $\beta$ ensure that 
\begin{align}\label{ab}
0\leq \alpha-\beta(t)\leq 1-\ve_0,~~\text{ for }\alpha\in [b_1,b_2]\text{ and }t\in [0,T].
\end{align}

Due to the impacts of $\beta(t)$, it is difficult to apply analytical methods to study (\ref{dis}), while the convolution method  does not apply since the existence of the distributed-order integral makes it difficult to find a suitable kernel such that the its convolution with $k_\mu$ generates a generalized identity function.

\subsection{Idea of the method} Motivated by the aforementioned questions, we propose an alternative method called the perturbation method to resolve them. The main idea of the perturbation method is to replace the variable-exponent kernel by a suitable kernel such that their difference serves as a low-order perturbation term, without affecting other terms in the model.

 To illustrate this idea based on the subdiffusion model  (\ref{VtFDEs})--(\ref{ibc}), we split the kernel $k(t)$ into two parts as follows
\begin{align}
k(t)&=\frac{t^{-\alpha(t)}}{\Gamma(1-\alpha(t))}=\frac{t^{-\alpha_0}}{\Gamma(1-\alpha_0)}+\int_0^t\p_z\frac{t^{-\alpha(z)}}{\Gamma(1-\alpha(z))}dz=:\beta_{1-\alpha_0}+\tilde g(t),\label{hh3}
\end{align}
where, by direct calculations and the notation $\gamma(z):=\frac{\Gamma'(z)}{\Gamma(z)}$,
\begin{align*}
\tilde g(t)=\int_0^t\frac{t^{-\alpha(z)}}{\Gamma(1-\alpha(z))}\big(-\alpha'(z)\ln t+\gamma(1-\alpha(z))\alpha'(z) \big)dz.
\end{align*}
As 
$$t^{-\alpha(z)}=t^{-\alpha_0}t^{\alpha_0-\alpha(z)} =t^{-\alpha_0}e^{(\alpha_0-\alpha(z))\ln t}\leq t^{-\alpha_0}e^{Qz|\ln t|}\leq t^{-\alpha_0}e^{Qt|\ln t|}\leq Qt^{-\alpha_0},$$
$\tilde g$ could be bounded as
\begin{align}\label{bndg}
|\tilde g|\leq Q\int_0^t t^{-\alpha_0}(1+|\ln t|)dz\leq Qt^{1-\alpha_0}(1+|\ln t|)\rightarrow 0\text{ as }t\rightarrow 0^+. 
\end{align}
By similar estimates we bound $\tilde g'$ as
\begin{align}\label{bndg'}
|\tilde g'|\leq Qt^{-\alpha_0}(1+|\ln t|)\in L^1(0,T). 
\end{align}
We then invoke (\ref{hh3}) in (\ref{VtFDEs}) to get
\begin{align}\label{qq1}
^c\p_t^{\alpha_0}u+\tilde g*\p_tu -\Delta u=f. 
\end{align}
We apply the integration by parts to get
$$\tilde g*\p_tu=-\tilde gu_0+\tilde g'*u, $$
which, together with (\ref{qq1}), leads to
\begin{align*}
^c\p_t^{\alpha_0}u+\tilde g'*u -\Delta u=f+\tilde gu_0. 
\end{align*}
As the term $\tilde g'*u$ is a low-order term compared with $^c\p_t^{\alpha_0}u$, one could analyze this model by means of Laplace transform as we did in Section \ref{seccm}.

\noindent\underline{\textbf{Relation between two methods}}

Note that the transformed model (\ref{qq1}) looks different from (\ref{Model2}). However, if we further apply $\p_t^{1-\alpha_0}$ on both sides of (\ref{qq1}), we have
\begin{align}\label{qq3}
\p_tu+\p_t^{1-\alpha_0}(\tilde g*\p_tu) -\p_t^{1-\alpha_0}\Delta u=\p_t^{1-\alpha_0}f. 
\end{align}
Now the two transformed models (\ref{qq3}) and
(\ref{Model2}) are exactly the same equation if we can show that
$$\p_t^{1-\alpha_0}(\tilde g*\p_tu)=g'*\p_tu. $$
To prove this relation, recall that $g=\beta_{\alpha_0}*k$ with $g(0)-1=0$ and $\tilde g=k-\beta_{1-\alpha_0}$. Then direct calculations show that
$$\p_t^{1-\alpha_0}(\tilde g*\p_tu)=\p_t\big[\beta_{\alpha_0}*(k-\beta_{1-\alpha_0})*\p_tu\big]=\p_t\big[(g-1)*\p_tu\big]=g'*\p_tu. $$
Thus, for models such as the variable-exponent subdiffusion equation (\ref{VtFDEs}), both the convolution method and the perturbation method could transform the original problem to the same model such that the analysis such as those in Section \ref{seccm} could be performed. Nevertheless, the perturbation method applies to more complicated models such as those in \textbf{Examples 1-2} , as we will see later.
 
\subsection{Space-fractional problem with variable exponent}
We consider the two-sided space-fractional diffusion-advection-reaction equation (\ref{bvp})--(\ref{bvp2}) proposed in \textbf{Example 1}. It is worth mentioning that if we denote the equation (\ref{bvp}) as $\mathcal L_{x,r}^{\alpha(t)}u=f$, the upcoming analysis also applies to the multi-dimensional case, e,g, the two-dimensional case $\mathcal L_{x,r}^{\alpha(x)}u+\mathcal L_{y,\bar r}^{\bar\alpha(x)}u=f$ for some $0<\bar r<1$ and $1<\bar\alpha(x)<2$ on $(x,y)\in (0,1)^2$.

Recall that $\alpha_0=\alpha(0)$ and  $\alpha_1=\alpha(1)$.
 Following the idea of the perturbation method, we split the kernels in $I_x^{2-\alpha(x)}$ and $\hat I_x^{2-\alpha(x)}$ as
$$\frac{x^{1-\alpha(x)}}{\Gamma(2-\alpha(x))}=\beta_{2-\alpha_0}+g_l(x),~~g_l(x):=\int_0^x\p_z\frac{x^{1-\alpha(z)}}{\Gamma(2-\alpha(z))}dz, $$
and
$$\frac{x^{1-\alpha(1-x)}}{\Gamma(2-\alpha(1-x))}=\beta_{2-\alpha_1}+g_r(x),~~g_r(x):=\int_0^x\p_z\frac{x^{1-\alpha(1-z)}}{\Gamma(2-\alpha(1-z))}dz. $$
By similar estimates as (\ref{bndg})--(\ref{bndg'}), we have the following estimates for $0<\ve\ll 1$
\begin{align}
&g_l(0)=g_r(0)=0,~~|g_l'|\leq Q_1x^{1-\alpha_0-\ve},~~|g_r'|\leq Q_1x^{1-\alpha_1-\ve},\nonumber\\
&\hspace{1.18in}|g_l''|\leq Q_2x^{-\alpha_0-\ve},~~|g_r''|\leq Q_2x^{-\alpha_1-\ve}.\label{glr}
\end{align}
Then equation (\ref{bvp}) is equivalently rewritten as
\begin{align*}
&-\p_x \big(rI_x^{2-\alpha_0}+(1-r)\hat I_x^{2-\alpha_1}\big)\p_x u\\
&\quad-r \int_0^xg_l'(x-s)\p_su(s)ds+(1-r)\int_x^1g_r'(s-x)\p_s u(s)ds\\
&\quad+b(x)\p_xu+c(x)u=f(x).
\end{align*}
By integration by parts, we define the corresponding bilinear form by
\begin{align*}
a(u,v):=& r\big(I_x^{1-\frac{\alpha_0}{2}}\p_x u,\hat I_x^{1-\frac{\alpha_0}{2}}\p_x v\big)+(1-r)\big(I_x^{1-\frac{\alpha_1}{2}}\p_x u,\hat I_x^{1-\frac{\alpha_1}{2}}\p_x v\big)\\
&\quad-r( \int_0^xg_l'(x-s)\p_su(s)ds,v)+(1-r)(\int_x^1g_r'(s-x)\p_s u(s)ds,v)\\
&\quad+(b(x)\p_xu,v)+(c(x)u,v).
\end{align*}
Then the variational formulation of (\ref{bvp})--(\ref{bvp2}) reads: find $u\in H^{\alpha^*}_0$ where $\alpha^*=\frac{\max\{\alpha_0,\alpha_1\}}{2}$ such that
\begin{align}\label{varia}
a(u,v)=(f,v),~~\forall v\in H^{\alpha^*}_0.
\end{align}

The following estimates are critical in both mathematical and numerical analysis. 
\begin{lemma}\label{lemcoer}
Suppose $b\in C^1[0,1]$ and $c$ is bounded. Then there exist a positive constant $\bar c$ such that when
\begin{align*}
c-\frac{1}{2}b'\geq \bar c,\text{ for }x\in [0,1],
\end{align*}
 the following properties hold for any $u,v\in H^{\alpha^*}_0$
\begin{align*}
a(u,u)\geq c_*\|u\|_{H^{\alpha^*}}^2,~~|a(u,v)|\leq c^*\|u\|_{H^{\alpha^*}}\|v\|_{H^{\alpha^*}},
\end{align*}
for some positive constants $c_*$ and $c^*$.
\end{lemma}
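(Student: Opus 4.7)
The strategy is to split $a(u,v)=a_F(u,v)+a_g(u,v)+a_{br}(u,v)$ into the main symmetric fractional part consisting of the two $I_x^{1-\alpha_i/2}\hat I_x^{1-\alpha_i/2}$ summands, the convolution perturbation involving $g_l',g_r'$, and the advection-reaction part $(b\p_x u,v)+(cu,v)$, then handle each piece separately and combine via Young's inequality. For $a_F$, the core identities for $u\in H_0^{\alpha_0/2}$ are $I_x^{1-\alpha_0/2}u=I_x^{2-\alpha_0/2}\p_x u$ (integration by parts using $u(0)=0$) together with the semigroup property (\ref{er0}), which yield $I_x^{1-\alpha_0/2}\p_x u=\p_x^{\alpha_0/2}u$; the parallel right-sided calculation, using $u(1)=0$, gives $\hat I_x^{1-\alpha_0/2}\p_x u=-\hat\p_x^{\alpha_0/2}u$. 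The first summand of $a_F$ thus equals $-r(\p_x^{\alpha_0/2}u,\hat\p_x^{\alpha_0/2}v)$, and (\ref{er}) with $\mu=\alpha_0/2\in(1/2,1)$, $\sigma=1$ delivers both coercivity ($\geq c_1\|u\|_{H^{\alpha_0/2}}^2$ at $v=u$) and continuity ($\leq c_2\|u\|_{H^{\alpha_0/2}}\|v\|_{H^{\alpha_0/2}}$). The same argument for $\mu=\alpha_1/2$, followed by convex combination, produces $a_F(u,u)\geq C_1\|u\|_{H^{\alpha^*}}^2$ (since $\alpha^*=\max(\alpha_0,\alpha_1)/2$ is realized by one summand) and $|a_F(u,v)|\leq C_F\|u\|_{H^{\alpha^*}}\|v\|_{H^{\alpha^*}}$.

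For $a_g$, I would first transfer the derivative by the identity
\begin{equation*}
\int_0^x g_l'(x-s)\,\p_s u(s)\,ds=\p_x(g_l*\p_x u)=\p_x(g_l'*u),
\end{equation*}
valid since $g_l(0)=u(0)=0$. The bound $|g_l'(x)|\leq Q_1 x^{1-\alpha_0-\ve}$ from (\ref{glr}) identifies $g_l'*u$, up to a bounded factor, with the fractional integral $I_x^{2-\alpha_0-\ve}u$, and its $x$-derivative is controlled by a Caputo-type derivative of $u$ of order $\alpha_0-1+\ve$. Since $\alpha_0<2$ we have $\alpha_0-1+\ve<\alpha_0/2\leq\alpha^*$, so the standard fractional regularity estimate gives $\|\p_x(g_l'*u)\|\leq C\|u\|_{H^{\alpha_0-1+\ve}}\leq C\|u\|_{H^{\alpha^*}}$. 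An entirely analogous argument, using $g_r(0)=0$ and $|g_r'|\leq Q_1 x^{1-\alpha_1-\ve}$, handles the right-sided term, and Cauchy-Schwarz then yields $|a_g(u,v)|\leq C_2\|u\|_{H^{\alpha^*}}\|v\|\leq C_2\|u\|_{H^{\alpha^*}}\|v\|_{H^{\alpha^*}}$. For $a_{br}$, the IBP identity $(b\p_x u,u)=-\tfrac{1}{2}(b',u^2)$ (from $u(0)=u(1)=0$) gives $a_{br}(u,u)=\int_0^1(c-\tfrac{1}{2}b')u^2\,dx\geq\bar c\|u\|^2$, and continuity follows from $b,c\in L^\infty$ together with a symmetric rewriting of the advection form (or an Ervin--Roop-type fractional IBP) that avoids requiring $\p_x u\in L^2$.

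Combining the three partial bounds yields $a(u,u)\geq C_1\|u\|_{H^{\alpha^*}}^2-C_2\|u\|_{H^{\alpha^*}}\|u\|+\bar c\|u\|^2$, and Young's inequality $C_2\|u\|_{H^{\alpha^*}}\|u\|\leq\tfrac{C_1}{2}\|u\|_{H^{\alpha^*}}^2+\tfrac{C_2^2}{2C_1}\|u\|^2$ absorbs the cross term; choosing $\bar c\geq C_2^2/(2C_1)$ --- which is precisely the threshold $\bar c$ appearing in the statement --- delivers $a(u,u)\geq\tfrac{C_1}{2}\|u\|_{H^{\alpha^*}}^2$, while summing the three partial continuity estimates furnishes the upper bound. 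The hard part I anticipate is the rigorous identification carried out in the second paragraph: because $\p_s u$ is only a distribution when $u\in H^{\alpha^*}$ with $\alpha^*<1$, Young's convolution inequality cannot be applied to $g_l'*\p_x u$ directly, so the argument must hinge on the IBP identity $g_l*\p_s u=g_l'*u$ together with the perturbation bound (\ref{glr}) to recover the regularity gap $\alpha^*-(\alpha_0-1+\ve)>0$ that makes $a_g$ genuinely lower-order than $a_F$ and therefore absorbable by Young.
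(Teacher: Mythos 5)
Your overall architecture coincides with the paper's: the same three-way splitting of $a(\cdot,\cdot)$, the use of (\ref{er}) (after identifying $I_x^{1-\alpha_0/2}\p_x u$ with $\pm$ the Riemann--Liouville derivatives) for the principal fractional part, the integration by parts $(b\p_xu,u)+(cu,u)=((c-\tfrac12 b')u,u)$, and the final absorption of a bound of the form $Q\|u\|_{H^{\alpha^*}}\|u\|_{L^2}$ into the coercive terms by Young's inequality, with $\bar c$ chosen as the resulting threshold. The paper also confirms your closing remark that continuity of $(b\p_xu,v)$ needs $b\in C^1$ (via $\|bv\|_{H^{\alpha^*}}\le Q\|v\|_{H^{\alpha^*}}$ from Ervin--Roop), not merely $b\in L^\infty$.

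The genuine gap is in your treatment of the perturbation terms. After the (correct) identity $g_l'*\p_x u=\p_x(g_l'*u)$, you assert that the pointwise bound $|g_l'(x)|\le Q_1x^{1-\alpha_0-\ve}$ "identifies $g_l'*u$, up to a bounded factor, with $I_x^{2-\alpha_0-\ve}u$" and that its derivative is therefore controlled like a fractional derivative of order $\alpha_0-1+\ve$. This does not follow: a pointwise bound on a convolution kernel gives no control on the derivative of the convolution. To differentiate $g_l'*u$ one must either differentiate the kernel --- and $|g_l''(x)|\le Q_2x^{-\alpha_0-\ve}$ is \emph{not} integrable at the origin since $\alpha_0>1$, while $g_l'(0^+)$ is infinite, so neither $\p_x(g_l'*u)=g_l''*u$ nor the "boundary term plus $g_l''*u$" formula is available --- or differentiate $u$, which is only a distribution in $H^{\alpha^*-1}$. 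Even the Hölder-cancellation trick $\int_0^x g_l''(x-s)(u(s)-u(x))\,ds$ fails here: $u\in H^{\alpha^*}$ gives Hölder exponent $\alpha^*-\tfrac12$, and $-\alpha_0-\ve+\alpha^*-\tfrac12>-1$ would force $\alpha_0<1-2\ve$, contradicting $\alpha_0>1$. The paper circumvents exactly this obstruction by never forming $\p_x(g_l'*u)$ in $L^2$: it moves to the adjoint $\int_0^1\p_su(s)\int_s^1g_l'(x-s)u(x)\,dx\,ds$, writes $\p_su=\p_s I_s^{\alpha_0/2}I_s^{1-\alpha_0/2}\p_su$, transfers $\hat I_s^{\alpha_0/2}$ onto the inner integral, and only then differentiates; the extra $\alpha_0/2$ orders of smoothing turn the $x^{-\alpha_0-\ve}$ singularity of $g_l''$ into the integrable $x^{-\alpha_0/2-\ve}$, yielding $|(\,g_l'*\p_xu,u)|\le Q\|\p_x^{\alpha_0/2}u\|\,\|u\|$. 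Your proposal needs this (or an equivalent) mechanism, and must invoke the second bound $|g_l''|\le Q_2x^{-\alpha_0-\ve}$ of (\ref{glr}), which you never use.
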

\begin{proof}
By the Sobolev embedding theorem, we have $u,v\in C[0,1]$. To show the coercivity, we apply (\ref{er}) to get
\begin{align}
& r\big(I_x^{1-\frac{\alpha_0}{2}}\p_x u,\hat I_x^{1-\frac{\alpha_0}{2}}\p_x u\big)+(1-r)\big(I_x^{1-\frac{\alpha_1}{2}}\p_x u,\hat I_x^{1-\frac{\alpha_1}{2}}\p_x u\big)\nonumber\\
 &\quad\geq rc_1\|u\|_{H^{\frac{\alpha_0}{2}}}^2+(1-r)c_1\|u\|_{H^{\frac{\alpha_1}{2}}}^2.\label{coer1}
\end{align}
Then an integration by parts yields
\begin{align}\label{coer2}
(b(x)\p_xu,u)+(c(x)u,u)=\big((c-\frac{1}{2}b')u,u\big). 
\end{align}
The estimate of the term containing $g_l$ requires technical derivations. By (\ref{er0}),direct calculations yield
\begin{align*}
&( \int_0^xg_l'(x-s)\p_su(s)ds,u)\\
&\quad=\int_0^1\int_0^xg_l'(x-s)\p_su(s)dsu(x) dx\\
&\quad=\int_0^1\p_su(s)\int_s^1g_l'(x-s)u(x) dxds\\
&\quad=\int_0^1\p_s(I^1_s\p_su(s))\int_s^1g_l'(x-s)u(x) dxds\\
&\quad=-\int_0^1\big(I^1_s\p_su(s)\big)\p_s\int_s^1g_l'(x-s)u(x) dxds\\
&\quad=-\int_0^1\big(I^{1-\frac{\alpha_0}{2}}_s\p_su(s)\big)\hat I^{\frac{\alpha_0}{2}}_s\big(\p_s\int_s^1g_l'(x-s)u(x) dx\big)ds.
\end{align*}
By the relation $\p_s\hat I^{\frac{\alpha_0}{2}}_sq=\hat I^{\frac{\alpha_0}{2}}_s\p_sq$ when $q(1)=0$ \cite{Jinbook}, we have
\begin{align}
&( \int_0^xg_l'(x-s)\p_su(s)ds,u)\nonumber\\
&\quad=-\int_0^1\big(I^{1-\frac{\alpha_0}{2}}_s\p_su(s)\big)\p_s\big(\hat I^{\frac{\alpha_0}{2}}_s\int_s^1g_l'(x-s)u(x) dx\big)ds.\label{bvp5}
\end{align}
By variable substitution $z=\frac{y-s}{x-s}$ we have
\begin{align}
&\hat I^{\frac{\alpha_0}{2}}_s\int_s^1g_l'(x-s)u(x) dx=\int_s^1\frac{(y-s)^{\frac{\alpha_0}{2}-1}}{\Gamma(\frac{\alpha_0}{2})}\int_y^1g_l'(x-y)u(x) dxdy\nonumber\\
&\quad=\int_s^1u(x)\int_s^x \frac{(y-s)^{\frac{\alpha_0}{2}-1}}{\Gamma(\frac{\alpha_0}{2})}g_l'(x-y)dydx\label{bvp6}\\
&\quad=\int_s^1u(x)(x-s)^{\frac{\alpha_0}{2}}\int_0^1 \frac{z^{\frac{\alpha_0}{2}-1}}{\Gamma(\frac{\alpha_0}{2})}g_l'((x-s)(1-z))dzdx.\nonumber
\end{align}
By (\ref{glr}) we have
\begin{align*}
&\Big|u(x)(x-s)^{\frac{\alpha_0}{2}}\int_0^1 \frac{z^{\frac{\alpha_0}{2}-1}}{\Gamma(\frac{\alpha_0}{2})}g_l'((x-s)(1-z))dz \Big|\\
&\quad\leq Q(x-s)^{1-\frac{\alpha_0}{2}-\ve}\int_0^1 \frac{z^{\frac{\alpha_0}{2}-1}}{\Gamma(\frac{\alpha_0}{2})}(1-z)^{1-\alpha_0-\ve}dz\rightarrow 0 \text{ as }x\rightarrow s^+,
\end{align*}
then we invoke (\ref{bvp6}) to get
\begin{align*}
&\Big|\p_s\big(\hat I^{\frac{\alpha_0}{2}}_s\int_s^1g_l'(x-s)u(x) dx\big)\Big|\\
&\quad =\Big|\p_s \int_s^1u(x)(x-s)^{\frac{\alpha_0}{2}}\int_0^1 \frac{z^{\frac{\alpha_0}{2}-1}}{\Gamma(\frac{\alpha_0}{2})}g_l'((x-s)(1-z))dzdx\Big|\\
&\quad =\Big|-\frac{\alpha_0}{2} \int_s^1u(x)(x-s)^{\frac{\alpha_0}{2}-1}\int_0^1 \frac{z^{\frac{\alpha_0}{2}-1}}{\Gamma(\frac{\alpha_0}{2})}g_l'((x-s)(1-z))dzdx\\
&\qquad-\int_s^1u(x)(x-s)^{\frac{\alpha_0}{2}}\int_0^1 \frac{z^{\frac{\alpha_0}{2}-1}}{\Gamma(\frac{\alpha_0}{2})}g_l''((x-s)(1-z))(1-z)dzdx\Big|\\
&\quad\leq Q_1\frac{\alpha_0}{2} \int_s^1|u(x)|(x-s)^{-\frac{\alpha_0}{2}-\ve}\int_0^1 \frac{z^{\frac{\alpha_0}{2}-1}}{\Gamma(\frac{\alpha_0}{2})}(1-z)^{1-\alpha_0-\ve}dzdx\\
&\qquad+Q_2\int_s^1|u(x)|(x-s)^{\frac{-\alpha_0}{2}-\ve}\int_0^1 \frac{z^{\frac{\alpha_0}{2}-1}}{\Gamma(\frac{\alpha_0}{2})}(1-z)^{1-\alpha_0-\ve}dzdx\Big|\\
&\quad\leq \frac{Q_3}{\Gamma(1-\frac{\alpha_0}{2}-\ve)} \int_s^1|u(x)|(x-s)^{-\frac{\alpha_0}{2}-\ve}dx= Q_3\hat I_s^{1-\frac{\alpha_0}{2}-\ve}|u|,
\end{align*}
which, together with the fact that $\hat I_s^{1-\frac{\alpha_0}{2}-\ve}$ is a bounded linear operator in $L^2$, leads to
\begin{align*}
&\Big\|\p_s\big(\hat I^{\frac{\alpha_0}{2}}_s\int_s^1g_l'(x-s)u(x) dx\big)\Big\|\leq Q_3\|\hat I_s^{1-\frac{\alpha_0}{2}-\ve}|u|\|\leq Q_4\|u\|.
\end{align*}
We invoke this in (\ref{bvp5}) to finally obtain for $\ve_1>0$
\begin{align}
&\Big|( \int_0^xg_l'(x-s)\p_su(s)ds,u)\Big|\nonumber\\
&\qquad\leq Q_4\|\p_x^{\frac{\alpha_0}{2}}u\|\|u\|\leq \frac{Q_4}{c_3}\|u\|_{H^{\frac{\alpha_0}{2}}}\|u\|\leq \ve_1\|u\|_{H^{\frac{\alpha_0}{2}}}^2+\frac{Q_4^2}{4c_3^2\ve_1}\|u\|^2. \label{bvp7}
\end{align}
By similar estimates, we have the analogous result
\begin{align}
&\Big|(\int_x^1g_r'(s-x)\p_s u(s)ds,u)\Big|\nonumber\\
&\qquad\leq Q_5\|\hat\p_x^{\frac{\alpha_1}{2}}u\|\|u\|\leq \frac{Q_5}{c_3}\|u\|_{H^{\frac{\alpha_1}{2}}}\|u\|\leq \ve_1\|u\|_{H^{\frac{\alpha_1}{2}}}^2+\frac{Q_5^2}{4c_3^2\ve_1}\|u\|^2. \label{bvp8}
\end{align}
Combining (\ref{coer1}), (\ref{coer2}), (\ref{bvp7}) and (\ref{bvp8}) leads to
\begin{align*}
&a(u,u)\geq r(c_1-\ve_1)\|u\|_{H^{\frac{\alpha_0}{2}}}^2+(1-r)(c_1-\ve_1)\|u\|_{H^{\frac{\alpha_1}{2}}}^2\\
&\quad+\big((c-\frac{1}{2}b'-r\frac{Q_4^2}{4c_3^2\ve_1}-(1-r)\frac{Q_5^2}{4c_3^2\ve_1})u,u\big).
\end{align*}
Thus, for $\ve_1<c_1$ and 
$$c-\frac{1}{2}b'\geq r\frac{Q_4^2}{4c_3^2\ve_1}+(1-r)\frac{Q_5^2}{4c_3^2\ve_1},$$
we reach the coercivity. 

The proof of the second statement follows from (\ref{er}) and the above estimates and is thus omitted. The only issue that needs to be pointed out is that  the condition $b\in C^1[0,1]$ is used to ensure $\|bv\|_{H^{\alpha^*}}\leq Q\|v\|_{H^{\alpha^*}}$
by \cite[Lemma 3.2]{Erv} such that the term $(b\p_xu,v)$ could be bounded by $Q\|u\|_{H^{\alpha^*}}\|v\|_{H^{\alpha^*}}$, see e.g. \cite[Equation 28]{Erv}.
\end{proof}

Based on Lemma \ref{lemcoer} and the estimate for $f\in H^{-\alpha^*}$
$$|(f,v)|\leq \|f\|_{H^{-\alpha^*}}\|v\|_{H^{\alpha^*}}, $$
which means that $F(v):=(f,v)$ is a continuous linear functional over $H^{\alpha^*}$, we apply the Lax-Milgram theorem to reach the following conclusion.
\begin{theorem}
Under the conditions of Lemma \ref{lemcoer} and  $f\in H^{-\alpha^*}$, there exists a unique solution to (\ref{varia}) in $H^{\alpha^*}_0$ such that
$$\|u\|_{H^{\alpha^*}}\leq Q \|f\|_{H^{-\alpha^*}}.$$
\end{theorem}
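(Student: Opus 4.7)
The plan is a direct appeal to the Lax–Milgram theorem on the Hilbert space $H^{\alpha^*}_0$. All the nontrivial work has been done in Lemma \ref{lemcoer}: it establishes both the coercivity $a(u,u)\ge c_*\|u\|_{H^{\alpha^*}}^2$ and the continuity $|a(u,v)|\le c^*\|u\|_{H^{\alpha^*}}\|v\|_{H^{\alpha^*}}$ of the bilinear form on $H^{\alpha^*}_0\times H^{\alpha^*}_0$, where $\alpha^* = \max\{\alpha_0,\alpha_1\}/2$. The space $H^{\alpha^*}_0$ is a Hilbert space by the interpolation construction recalled in Section \ref{sec2}, so the abstract hypotheses of Lax–Milgram are met by the left-hand side.

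For the right-hand side, the assumption $f\in H^{-\alpha^*}$ together with the duality estimate $|(f,v)|\le \|f\|_{H^{-\alpha^*}}\|v\|_{H^{\alpha^*}}$ stated just before the theorem shows that $F(v):=(f,v)$ defines a bounded linear functional on $H^{\alpha^*}_0$ with operator norm at most $\|f\|_{H^{-\alpha^*}}$. Applying the Lax–Milgram theorem then yields a unique $u\in H^{\alpha^*}_0$ satisfying $a(u,v)=(f,v)$ for every $v\in H^{\alpha^*}_0$, which is exactly (\ref{varia}).

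The stability bound is obtained by testing the variational identity with $v=u$: coercivity gives
\begin{equation*}
c_*\|u\|_{H^{\alpha^*}}^2 \le a(u,u) = (f,u) \le \|f\|_{H^{-\alpha^*}}\|u\|_{H^{\alpha^*}},
\end{equation*}
and dividing by $\|u\|_{H^{\alpha^*}}$ (trivially if $u\equiv 0$) produces $\|u\|_{H^{\alpha^*}}\le c_*^{-1}\|f\|_{H^{-\alpha^*}}$, which is the claim with $Q=c_*^{-1}$.

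Strictly speaking there is no real obstacle at this stage, since the perturbation-method analysis has already absorbed the difficulty: the hard part was isolating the variable-exponent remainders $g_l,g_r$ and proving via (\ref{bvp5})--(\ref{bvp8}) that the non-standard, non-symmetric terms in $a(\cdot,\cdot)$ can be controlled by $\varepsilon_1\|u\|_{H^{\alpha_0/2}}^2+\varepsilon_1\|u\|_{H^{\alpha_1/2}}^2$ plus a lower-order $L^2$ mass term, which was precisely the content of Lemma \ref{lemcoer}. The only point to double-check during the writeup is that the duality pairing $(f,v)$ is interpreted consistently with the $H^{-\alpha^*}$ norm when $\alpha^*$ is close to $\tfrac12$; this is handled by the interpolation definition of $H^{\alpha^*}_0$ recalled in the preliminaries, so no additional argument is required.
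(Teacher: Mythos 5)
Your proposal is correct and follows exactly the route the paper takes: the theorem is stated in the paper as an immediate consequence of the Lax--Milgram theorem, using the coercivity and continuity from Lemma \ref{lemcoer} together with the duality bound $|(f,v)|\leq \|f\|_{H^{-\alpha^*}}\|v\|_{H^{\alpha^*}}$. Your explicit derivation of the stability estimate by testing with $v=u$ is a standard detail the paper leaves implicit, and it is carried out correctly.
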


\subsection{Distributed variable-exponent model}
 We employ the idea of the perturbation method  to analyze the distributed variable-exponent model (\ref{dis})--(\ref{3ibc}) in \textbf{Example 2}. We apply $\beta(0)=0$ to split $k_\mu(t)$ into two parts as follows
\begin{align}
k_{\mu,\beta}(t)&=\int_0^1\frac{t^{-\alpha}}{\Gamma(1-\alpha)}\mu(\alpha)d\alpha+\int_0^1\int_0^t\p_z\frac{t^{-\alpha+\beta(z)}}{\Gamma(1-\alpha+\beta(z))}dz\mu(\alpha)d\alpha\nonumber\\
&=:k_\mu(t)+\int_0^1g(t;\alpha)\mu(\alpha)d\alpha,~~g(t;\alpha):=\int_0^t\p_z\frac{t^{-\alpha+\beta(z)}}{\Gamma(1-\alpha+\beta(z))}dz.\label{dis3}
\end{align}
Based on (\ref{ab}), direct calculations show that 
\begin{align}
|g(t;\alpha)|&=\Big|\int_0^t \frac{t^{-\alpha+\beta(z)}}{\Gamma(1-\alpha+\beta(z))}\Big(\beta'(z)\ln t-\frac{\beta'(z)}{\Gamma(1-\alpha+\beta(z))}\Big)dz\Big|\nonumber\\
&\leq Qt^{1-\alpha+\beta(z)}(|\ln t|+1)\leq Qt^{\ve_0}(|\ln t|+1);~~\lim_{t\rightarrow 0^+}g(t;\alpha)=0;\label{dis5}\\
|\p_tg(t;\alpha)|&= \Big|\frac{t^{-\alpha+\beta(t)}}{\Gamma(1-\alpha+\beta(t))}\Big(\beta'(t)\ln t-\frac{\beta'(t)}{\Gamma(1-\alpha+\beta(t))}\Big)\nonumber\\
&\qquad+\int_0^t \p_t\Big[\frac{t^{-\alpha+\beta(z)}}{\Gamma(1-\alpha+\beta(z))}\Big(\beta'(z)\ln t-\frac{\beta'(z)}{\Gamma(1-\alpha+\beta(z))}\Big)\Big]dz\Big|\nonumber\\
&\leq Qt^{-(1-\ve_0)}(|\ln t|+1)\leq Qt^{-(1-\frac{\ve_0}{2})}.\label{dis6}
\end{align}
We then invoke (\ref{dis3}) in (\ref{dis}) and apply (\ref{dis5}) to get
 \begin{align}\label{dis4}
&~~~\p_t^{[\mu]} u+ \int_0^1\p_tg(t;\alpha)\mu(\alpha)d\alpha*(u-u_0)- \Delta u = f,
 \end{align}
where $\p_t^{[\mu]} $ is the standard distributed-order operator defined as \cite{KubRys,LorHar}
$$\p_t^{[\mu]} u:= \p_t (k_{\mu}*(u-u_0)).$$
Then we follow \cite[Section 3]{JinKia} to take the Laplace transform of (\ref{dis4}) to represent $u$ as
\begin{align}\label{disu}
u=S_1(t)u_0+\int_0^tS_2(t-s)\Big(- \int_0^1\p_sg(s;\alpha)\mu(\alpha)d\alpha*(u-u_0)+f(s)\Big)ds.
\end{align}
Here $S_1$ and $S_2$ are given as  
\begin{align*}
S_1(t)q&:=\frac{1}{2\pi \rm i}\int_{\Gamma_{\theta_1}} e^{tz}(\mathcal V(z)-\Delta)^{-1}z^{-1}\mathcal V(z)qd z,\\
S_2(t)q&:=\frac{1}{2\pi \rm i}\int_{\Gamma_{\theta_1}} e^{tz}(\mathcal V(z)-\Delta)^{-1}qd z,
\end{align*}
where $\mathcal V(z):=\int_0^1z^{\alpha}\mu(\alpha)d\alpha$ and $\theta_1\in (0,\frac{2\theta-\pi}{8})$ with $\theta\in (\frac{\pi}{2},\min\{\frac{\pi}{2b_2},\pi\})$, with the following estimates \cite[Lemma 3.1]{JinKia}
\begin{align}\label{dis7}
\|S_1(t)\|\leq Qt^{\frac{b_1+b_2}{2}-1},~~\|S_2(t)\|\leq Qt^{\frac{b_1-b_2}{2}}.
\end{align}
Then we follow \cite{JinKia} to define a weak solution to (\ref{dis})--(\ref{3ibc}): a function $u\in L^1(L^2)$ is a weak solution to (\ref{dis})--(\ref{3ibc}) if it satisfies (\ref{disu}). The following theorem gives the existence and uniqueness of the weak solution to (\ref{dis})--(\ref{3ibc}).
\begin{theorem}
Suppose $u_0\in L^2$ and $f\in L^1(L^2)$, then model (\ref{dis})--(\ref{3ibc}) admits a unique weak solution $u\in L^1(L^2)$ with the following estimate
$$\|u\|_{L^1(L^2)}\leq Q(\|u_0\|+\|f\|_{L^1(L^2)}). $$
\end{theorem}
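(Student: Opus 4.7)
The plan is to recast equation (\ref{disu}) as a fixed-point problem and apply Banach's contraction principle on $L^1(L^2)$ equipped with an exponentially weighted norm, in the same spirit as the proof of Theorem \ref{thm:aux}. Denote $K(t):=\int_0^1\p_t g(t;\alpha)\mu(\alpha)\,d\alpha$ and define $\mathcal T:L^1(L^2)\to L^1(L^2)$ by
\begin{equation*}
(\mathcal T v)(t):=S_1(t)u_0+\int_0^tS_2(t-s)\bigl[-(K*(v-u_0))(s)+f(s)\bigr]\,ds,
\end{equation*}
so that weak solutions are exactly fixed points of $\mathcal T$.

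First I would verify that $\mathcal T$ maps $L^1(L^2)$ into itself. By (\ref{dis7}), $\|S_1\|_{L^1(0,T)}<\infty$ because $(b_1+b_2)/2>0$, and $\|S_2\|_{L^1(0,T)}<\infty$ because $(b_1-b_2)/2>-1$ (using $b_2<1\leq b_1+1$). The estimate (\ref{dis6}) together with the boundedness of $\mu$ on its compact support $[b_1,b_2]\subset[0,1)$ yields $|K(t)|\leq Qt^{-(1-\ve_0/2)}$, hence $K\in L^1(0,T)$. Regarding $u_0$ as the constant-in-time $L^2$-valued function, so that $(K*u_0)(t)=u_0\int_0^tK(s)\,ds$ with $\|K*u_0\|\leq\|K\|_{L^1}\|u_0\|$, Young's convolution inequality delivers
\begin{equation*}
\|\mathcal T v\|_{L^1(L^2)}\leq Q\bigl(\|u_0\|+\|f\|_{L^1(L^2)}+\|v\|_{L^1(L^2)}\bigr).
\end{equation*}
To obtain a contraction for arbitrary $T$, introduce the equivalent weighted norm $\|v\|_\sigma:=\int_0^Te^{-\sigma t}\|v(t)\|\,dt$, $\sigma>0$, for which Fubini's theorem gives $\|\phi*\psi\|_\sigma\leq\|\phi\|_\sigma\|\psi\|_\sigma$. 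The linearity in $v$ implies $\mathcal Tv_1-\mathcal Tv_2=-S_2*K*(v_1-v_2)$, whence
\begin{equation*}
\|\mathcal Tv_1-\mathcal Tv_2\|_\sigma\leq\|e^{-\sigma t}S_2\|_{L^1(0,T)}\,\|e^{-\sigma t}K\|_{L^1(0,T)}\,\|v_1-v_2\|_\sigma.
\end{equation*}
The substitution $r=\sigma t$ combined with $|K(t)|\leq Qt^{-(1-\ve_0/2)}$ yields $\|e^{-\sigma t}K\|_{L^1(0,T)}\leq Q\sigma^{-\ve_0/2}\to 0$ as $\sigma\to\infty$, while $\|e^{-\sigma t}S_2\|_{L^1(0,T)}\leq\|S_2\|_{L^1(0,T)}$ stays uniformly bounded. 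Choosing $\sigma$ large enough so that the product is strictly less than one, $\mathcal T$ becomes a strict contraction on $(L^1(L^2),\|\cdot\|_\sigma)$, and the Banach fixed-point theorem produces a unique $u\in L^1(L^2)$ with $u=\mathcal T u$.

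For the stability estimate, taking $v=u$ in the above, bounding the three ``data'' terms $S_1u_0$, $S_2*f$, $S_2*K*u_0$ in $\|\cdot\|_\sigma$ by $Q(\|u_0\|+\|f\|_{L^1(L^2)})$ using the same $L^1$ bounds on $S_1$, $S_2$, $K$, and absorbing the self-referencing term $S_2*K*u$ into the left side yields $\|u\|_\sigma\leq Q(\|u_0\|+\|f\|_{L^1(L^2)})$; the equivalence $e^{-\sigma T}\|u\|_{L^1(L^2)}\leq\|u\|_\sigma\leq\|u\|_{L^1(L^2)}$ then converts this to the stated bound. The main technical obstacle is the simultaneous handling of the two weakly singular kernels $S_2$ and $K$ in the double convolution $S_2*K*v$: one must carefully verify integrability of each factor near $t=0^+$ and apply Young's inequality (and its weighted version) in the correct order; the delicate cancellation $\beta(0)=0$ used in (\ref{dis3}) to produce the $t^{\ve_0}$ gain in $g$ is what ultimately makes the perturbation $K$ belong to $L^1$ and renders the whole contraction argument viable.
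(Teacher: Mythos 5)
Your proposal is correct and follows essentially the same route as the paper: both recast (\ref{disu}) as a fixed-point equation on $L^1(L^2)$ with an exponentially weighted norm and apply Banach's contraction principle, using (\ref{dis6})--(\ref{dis7}) to control the kernels. The only cosmetic difference is that the paper extracts the $\sigma$-smallness from the weighted $L^1$ norm of the $S_2$ factor, $\|e^{-\sigma t}t^{\frac{b_1-b_2}{2}}\|_{L^1(0,T)}\leq Q\sigma^{\frac{b_2-b_1}{2}-1}$, whereas you extract it from the perturbation kernel $K$; both choices yield a valid contraction.
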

\begin{proof}
Define a mapping $\mathcal M:\,L^1(L^2)\rightarrow L^1(L^2) $ by $v=\mathcal Mw$ where $v$ satisfies 
\begin{align}\label{dis9}
v=S_1(t)u_0+\int_0^tS_2(t-s)\Big(- \int_0^1\p_sg(s;\alpha)\mu(\alpha)d\alpha*(w-u_0)+f(s)\Big)ds.
\end{align}
For $\lambda\geq 0$, the equivalent norm $\|\cdot\|_{L^1_\lambda(L^2)}$ of $L^1(L^2)$ is given as $\|q\|_{L^1_\lambda(L^2)}:=\|e^{-\lambda t}q\|_{_{L^1(L^2)}}$. Then we first show that $\mathcal M$ is well defined. For $w\in L^1(L^2)$, we apply the $\|\cdot\|_{L^1_\lambda(L^2)}$ norm on both sides of (\ref{dis9}) and use (\ref{dis7}) and (\ref{dis6}) to get
\begin{align}
&\|v\|_{L^1_\lambda(L^2)}\leq \|S_1(t)u_0\|_{L^1_\lambda(L^2)}\nonumber\\
&\qquad+\Big\|\int_0^tS_2(t-s)\Big(- \int_0^1\p_sg(s;\alpha)\mu(\alpha)d\alpha*(w-u_0)+f(s)\Big)ds\Big\|_{L^1_\lambda(L^2)}\nonumber\\
&~~\leq Q\|t^{\frac{b_1+b_2}{2}-1}\|_{L^1(0,T)}\|u_0\|+Q\big\|t^{\frac{b_1-b_2}{2}}*\big(t^{-(1-\frac{\ve_0}{2})}*\|w-u_0\|+\|f\|  \big)\big\|_{L^1_\lambda(0,T)}\nonumber\\
&~~\leq Q(\|u_0\|+\|f\|_{L^1(L^2)})+Q\|e^{-\lambda t}t^{\frac{b_1-b_2}{2}}\|_{L^1(0,T)}\|w\|_{L^1_\lambda(L^2)}\nonumber\\
&~~\leq Q(\|u_0\|+\|f\|_{L^1(L^2)})+Q\sigma^{\frac{b_2-b_1}{2}-1}\|w\|_{L^1_\lambda(L^2)},\label{dis10}
\end{align}
where we applied a similar estimate as (\ref{zxc8})
\begin{align*}
 \|e^{-\sigma t}t^{\frac{b_1-b_2}{2}}\|_{L^1(0,T)}=\int_0^Te^{-\sigma t}t^{\frac{b_1-b_2}{2}}dt\leq \sigma^{\frac{b_2-b_1}{2}-1}\int_0^\infty e^{- t}t^{\frac{b_1-b_2}{2}}dt\leq Q\sigma^{\frac{b_2-b_1}{2}-1}. 
 \end{align*}
 Thus, $v\in L^1(L^2)$ such that $\mathcal M$ is well defined. To show the contractivity, it suffices to consider (\ref{dis9}) with $w=v$ and $u_0=f=0$. Then (\ref{dis10}) gives 
 \begin{align*}
&\|v\|_{L^1_\lambda(L^2)}\leq Q\sigma^{\frac{b_2-b_1}{2}-1}\|v\|_{L^1_\lambda(L^2)}.
\end{align*}
 Choosing $\sigma$ large enough ensures $Q\sigma^{\frac{b_2-b_1}{2}-1}<1$, which implies that $\mathcal M$ is a contraction mapping such that there exists a unique solution $u$ in $L^1(L^2)$ to (\ref{disu}). The stability estimate follows from (\ref{dis10}) with $v=w=u$ and $\sigma$ large enough. The proof is thus completed.
\end{proof}

\section{Application to Abel integral equation}
We further demonstrate the usage of the perturbation method by analyzing the following Abel integral equation of variable exponent, a typical first-kind Volterra integral equation with weak singularity that  would broaden the usefulness of its constant-exponent version as commented in \cite{LiaSty}
 \begin{align}\label{vie}
& (k* u)(t)= f(t) ,~~t\in [0,T]. 
 \end{align}
 \subsection{The case $0<\alpha_0<1$}\label{sec71}
 We prove the well-posedness of (\ref{vie}) in the following theorem.
 \begin{theorem}
 Suppose $f\in W^{1,1}(0,T)$. Then the integral equation (\ref{vie}) admits a unique solution in $L^1(0,T)$ such that
  $$\|u\|_{L^1(0,T)}\leq Q\|f\|_{W^{1,1}(0,T)}. $$
  If further $f\in W^{1,\frac{1}{\alpha_0-\sigma}}(0,T)$ for $0<\sigma\ll \alpha_0$, then 
$$|u|\leq Q|f(0)|t^{\alpha_0-1}+Q\|f'\|_{L^{\frac{1}{\alpha_0-\sigma}}(0,T)},~~t\in (0,T]. $$

 \end{theorem}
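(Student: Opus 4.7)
The plan is to apply the perturbation method directly to (\ref{vie}). First, using the decomposition $k = \beta_{1-\alpha_0} + \tilde g$ from (\ref{hh3}), I convolve (\ref{vie}) with $\beta_{\alpha_0}$ and invoke the semigroup property (\ref{er0}) together with $\beta_{\alpha_0}*k = g$ from (\ref{mh7}) to obtain $g*u = \beta_{\alpha_0}*f$. Since $g(0) = 1$ by (\ref{g0}), writing $g = 1 + (g-1)$ with $(g-1)(0) = 0$ and differentiating yields the equivalent second-kind weakly singular Volterra equation
\[
u + g'*u = \partial_t^{1-\alpha_0} f.
\]
This mirrors the reformulation (\ref{Model2}) for subdiffusion, with the elliptic term replaced by the identity; reversibility of the transformation follows by convolving back with $\beta_{1-\alpha_0}$ and using $\beta_{1-\alpha_0}*\beta_{\alpha_0}=1$.

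Second, I establish existence and uniqueness in $L^1(0,T)$ by contraction mapping. Equip $L^1(0,T)$ with the weighted norm $\|v\|_{L^1_\sigma} := \|e^{-\sigma t}v\|_{L^1(0,T)}$ (equivalent to $\|\cdot\|_{L^1}$) and define $\mathcal M v := \partial_t^{1-\alpha_0}f - g'*v$. The bound $|g'| \le Q(1+|\ln t|)$ from (\ref{g'}) is locally integrable, and an argument analogous to (\ref{zxc8}) gives $\|e^{-\sigma t}g'\|_{L^1(0,T)} \to 0$ as $\sigma \to \infty$; Young's convolution inequality then forces $\mathcal M$ to be a contraction for $\sigma$ large, yielding a unique fixed point $u \in L^1(0,T)$. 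For the first stability estimate, I use (\ref{imh}) with $\varepsilon = 0$ to write $\partial_t^{1-\alpha_0}f = \beta_{\alpha_0}f(0) + \beta_{\alpha_0}*f'$, whence Young's inequality gives $\|\partial_t^{1-\alpha_0}f\|_{L^1} \le Q(|f(0)|+\|f'\|_{L^1}) \le Q\|f\|_{W^{1,1}}$, and the contraction estimate yields the claimed $\|u\|_{L^1} \le Q\|f\|_{W^{1,1}}$.

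Third, for the pointwise bound under the stronger assumption $f \in W^{1,1/(\alpha_0-\sigma)}$, I take absolute values in the Volterra identity to obtain
\[
|u(t)| \le \beta_{\alpha_0}(t)|f(0)| + (\beta_{\alpha_0}*|f'|)(t) + (|g'|*|u|)(t).
\]
The first term produces exactly the claimed $Q|f(0)|t^{\alpha_0-1}$ singularity. For the second, I apply Hölder with conjugate exponents $\gamma = 1/(1-\alpha_0+\sigma) < 1/(1-\alpha_0)$ and $1/(\alpha_0-\sigma)$ to get $\beta_{\alpha_0}*|f'| \le \|\beta_{\alpha_0}\|_{L^\gamma(0,T)}\|f'\|_{L^{1/(\alpha_0-\sigma)}} \le Q\|f'\|_{L^{1/(\alpha_0-\sigma)}}$. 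Then I apply the weighted $L^\infty$ Gronwall argument from (\ref{zxc2})--(\ref{zxc3}) with weight $e^{-\theta t}t^{1-\alpha_0}$ to absorb $|g'|*|u|$ for $\theta$ large, concluding the proof.

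The main obstacle is the absorption step: one must control $\int_0^t e^{-\theta(t-s)}|g'(t-s)|\,t^{1-\alpha_0}/s^{1-\alpha_0}\,ds$ uniformly in $t\in(0,\bar t]$ by a quantity vanishing as $\theta \to \infty$. Since the logarithmic singularity in (\ref{g'}) is weaker than any power, one has $|g'(y)| \le Q y^{-\varepsilon}$ for arbitrarily small $\varepsilon>0$, reducing the integral to precisely the form bounded by $Q\theta^{-(1-\varepsilon)/2}$ in the proof of Theorem \ref{thm:reg}; this closes the Gronwall step and delivers the stated pointwise estimate.
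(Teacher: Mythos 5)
Your reformulation is, despite the different packaging, identical to the paper's: since $g=\beta_{\alpha_0}*k=1+\beta_{\alpha_0}*\tilde g$, your kernel $g'$ coincides with $(\beta_{\alpha_0}*\tilde g)'$, so your second-kind equation $u+g'*u=\p_t^{1-\alpha_0}f$ is exactly (\ref{vie2}); this is just the observation, already made in Section \ref{sec4}, that the convolution and perturbation methods produce the same transformed model. Your existence/uniqueness and $L^1$-stability via a weighted-norm contraction is a perfectly acceptable substitute for the paper's route (citing classical second-kind Volterra theory and then deriving (\ref{vies1})); both hinge on the same two facts, namely that the kernel is in $L^1(0,T)$ (your bound $|g'|\le Q(1+|\ln t|)$ from (\ref{g'}), the paper's $|(\beta_{\alpha_0}*\tilde g)'|\le Qt^{-\ve}$ from (\ref{vie7})) and that $\p_t^{1-\alpha_0}f=\beta_{\alpha_0}f(0)+\beta_{\alpha_0}*f'\in L^1(0,T)$.

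The one place where your argument, as written, falls short of the statement is the final pointwise bound. Running the sup-norm absorption with the single weight $e^{-\theta t}t^{1-\alpha_0}$ forces you to first dominate the entire inhomogeneity by $Q\big(|f(0)|+\|f'\|_{L^{1/(\alpha_0-\sigma)}}\big)t^{\alpha_0-1}$, and the conclusion you then obtain is $|u|\le Q\big(|f(0)|+\|f'\|_{L^{1/(\alpha_0-\sigma)}}\big)t^{\alpha_0-1}$. This attaches the $t^{\alpha_0-1}$ singularity to $\|f'\|$ as well and does not imply the stated estimate: take $f(0)=0$, where the theorem asserts $u$ is bounded while your bound still blows up at $t=0$. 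The repair is cheap: either split $u=u_1+u_2$ by linearity of the Volterra equation, treating the forcing $\beta_{\alpha_0}f(0)$ with your weight $e^{-\theta t}t^{1-\alpha_0}$ and the bounded forcing $\beta_{\alpha_0}*|f'|$ with the weight $e^{-\theta t}$ alone; or do what the paper does and invoke the weakly singular Gronwall inequality \cite[Theorem 4.2]{Jinbook} in its additive form, which preserves the shape $a(t)=Q|f(0)|t^{\alpha_0-1}+Q\|f'\|_{L^{1/(\alpha_0-\sigma)}}$ of the inhomogeneous term. With that adjustment your proof is complete and essentially coincides with the paper's.
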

 \begin{proof}
By the idea of the perturbation method, we split $k$ as (\ref{hh3}) such that the integral equation (\ref{vie}) could be equivalently rewritten as
\begin{align}\label{vie15}
\beta_{1-\alpha_0}* u=-\tilde g*u+ f.
\end{align}
As $\p_t^{1-\alpha_0}$ is the inverse operator of $I_t^{1-\alpha_0}$, we apply $\p_t^{1-\alpha_0}$ on both sides of this equation to get
\begin{align}\label{vie1}
 u=-\p_t^{1-\alpha_0}(\tilde g*u)+\p_t^{1-\alpha_0} f=-\big[(\beta_{\alpha_0}*\tilde g)*u\big]'+\p_t^{1-\alpha_0} f.
\end{align}
As $\tilde g$ is bounded (see (\ref{bndg})), we have $|\beta_{\alpha_0}*\tilde g|\rightarrow 0$ as $t\rightarrow 0^+$ such that (\ref{vie2}) could be further rewritten as a second-kind Volterra integral equation
\begin{align}\label{vie2}
 u=-(\beta_{\alpha_0}*\tilde g)'*u+\p_t^{1-\alpha_0} f.
\end{align}
By $\tilde g(0)=0$ and (\ref{bndg'}), we have
\begin{align}\label{vie7}
|(\beta_{\alpha_0}*\tilde g)'|=|\beta_{\alpha_0}*\tilde g'|\leq Q\beta_{\alpha_0}*t^{-\alpha_0-\ve}\leq Qt^{-\ve},
\end{align}
which means that $(\beta_{\alpha_0}*\tilde g)'\in L^1(0,T)$. Furthermore, for $f\in W^{1,1}(0,T)$, we have
\begin{align}\label{vie8}
\p_t^{1-\alpha_0} f=(\beta_{\alpha_0}*f)'=\beta_{\alpha_0}f(0)+\beta_{\alpha_0}*f'\in L^1(0,T).
\end{align}
Thus, by classical results on the
second-kind Volterra integral equation, see e.g. \cite[Theorem 2.3.5]{Gri}, model (\ref{vie2}) admits a unique solution in $L^1(0,T)$. 
 As the convolution of $\beta_{1-\alpha_0}$ and (\ref{vie2}) leads to (\ref{vie1}) and thus (\ref{vie}), the original model (\ref{vie}) has an $L^1$ solution. The uniqueness of the $L^1$ solution of (\ref{vie}) follows from that of (\ref{vie2}).

To derive the first stability estimate, we multiply (\ref{vie2}) by $e^{-\lambda t}$ for some $\lambda>0$ to get
\begin{align*}
 e^{-\lambda t}u=-[e^{-\lambda t}(\beta_{\alpha_0}*\tilde g)']*[e^{-\lambda t}u]+e^{-\lambda t}\p_t^{1-\alpha_0} f.
\end{align*}
Apply the $L^1$ norm on both sides of this equation and employ (\ref{vie7})--(\ref{vie8}) and the Young's convolution inequality to get
\begin{align}
\| e^{-\lambda t}u\|_{L^1(0,T)}&\leq Q\|e^{-\lambda t}t^{-\ve}\|_{L^1(0,T)}\|e^{-\lambda t}u\|_{L^1(0,T)}+Q\|f\|_{W^{1,1}(0,T)}\nonumber\\
&\leq Q\lambda^{\ve-1}\|e^{-\lambda t}u\|_{L^1(0,T)}+Q\|f\|_{W^{1,1}(0,T)}\label{vies1}
\end{align}
where we use a similar estimate as (\ref{zxc8}) to bound  $\|e^{-\lambda t}t^{-\ve}\|_{L^1(0,T)}$. Then we choose $\lambda$ large enough to get the first stability result.

To obtain the second estimate of this theorem, (\ref{vie2}), (\ref{vie7}) and (\ref{vie8}) imply
$$|u|\leq QI_t^{1-\ve}|u|+\beta_{\alpha_0}|f(0)|+\beta_{\alpha_0}*|f'|. $$
We apply the Young's convolution inequality with $\frac{1}{\frac{1}{1-\alpha_0+\sigma}}+\frac{1}{\frac{1}{\alpha_0-\sigma}}=1$ for $0<\sigma\ll \alpha_0$ to get
$$\|\beta_{\alpha_0}*|f'|\|_{L^\infty(0,t)}\leq Qt^{\sigma}\|f'\|_{L^{\frac{1}{\alpha_0-\sigma}}(0,T)}. $$
Combine the above two equations leads to
$$|u|\leq QI_t^{1-\ve}|u|+\beta_{\alpha_0}|f(0)|+Q\|f'\|_{L^{\frac{1}{\alpha_0-\sigma}}(0,T)}. $$
Then an application of the weakly singular Gronwall inequality, see e.g. \cite[Theorem 4.2]{Jinbook}, leads to the second estimate of this theorem and thus completes the proof.
\end{proof}

\subsection{The case $\alpha_0=0$}\label{sec72}
We consider the special case $\alpha_0=0$. If $\alpha_0=0$, then $\beta_{1-\alpha_0}=1$ such that (\ref{vie15}) becomes
\begin{align*}
\int_0^t u(s)ds=-\tilde g*u+ f.
\end{align*}
Differentiate this equation and use the estimate (\ref{bndg}) lead to
\begin{align}\label{vies2}
u=-\tilde g'*u+ f'.
\end{align}
  \begin{theorem}\label{thm72}
 Suppose $f\in W^{1,1}(0,T)$, $\alpha_0=0$. Then if $f(0)=0$, the integral equation (\ref{vie}) admits a unique solution in $L^1(0,T)$ such that
  $$\|u\|_{L^1(0,T)}\leq Q\|f\|_{W^{1,1}(0,T)}. $$
  If further $|f'|\leq L_f$ for $t\in [0,T]$ for some constant $L_f>0$, then 
$$|u|\leq QL_f,~~t\in [0,T]. $$

 \end{theorem}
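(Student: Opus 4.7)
The approach is to work with the second-kind Volterra equation (\ref{vies2}), $u = -\tilde g'*u + f'$, which is derived from (\ref{vie}) by using $\beta_{1-\alpha_0}=1$ when $\alpha_0=0$ and then differentiating (\ref{vie15}). The plan is first to justify that, under $f(0)=0$, (\ref{vies2}) is equivalent to (\ref{vie}), and then to treat (\ref{vies2}) by classical Volterra theory combined with the exponentially-weighted-norm trick already used in (\ref{vies1}).

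For the equivalence, I would integrate (\ref{vies2}) from $0$ to $t$ and use $\tilde g(0)=0$ from (\ref{bndg}) to recover $1*u = -\tilde g*u + f - f(0)$; the condition $f(0)=0$ is exactly what is needed so that this matches (\ref{vie15}) with $\alpha_0=0$, and hence (\ref{vie}). This clarifies why $f(0)=0$ is a genuine compatibility constraint: evaluating the original equation formally at $t=0$ forces $f$ to vanish there when $\alpha_0=0$, since the left-hand side $1*u$ and the perturbation $\tilde g*u$ (bounded kernel times $L^1$) both vanish at $t=0$. With this equivalence in hand, existence and uniqueness of $u\in L^1(0,T)$ for (\ref{vies2}) follows from the standard second-kind Volterra result in \cite{Gri}, since $\tilde g'\in L^1(0,T)$ by (\ref{bndg'}) specialised to $\alpha_0=0$ (where it becomes the integrable logarithm bound $Q(1+|\ln t|)$) and $f'\in L^1(0,T)$ by hypothesis.

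For the first stability estimate I would replicate the argument leading to (\ref{vies1}): multiplying (\ref{vies2}) by $e^{-\lambda t}$, applying the Young convolution inequality, and exploiting that $\|e^{-\lambda t}\tilde g'\|_{L^1(0,T)}\to 0$ as $\lambda\to\infty$. The only slight novelty is that here $|\tilde g'|\leq Q(1+|\ln t|)$ rather than a power singularity, but a substitution $s=\lambda t$ yields $\|e^{-\lambda t}\tilde g'\|_{L^1(0,T)}\leq Q(1+\ln\lambda)/\lambda$, which is small for large $\lambda$; absorbing this term on the left and undoing the exponential weight gives $\|u\|_{L^1(0,T)}\leq Q\|f\|_{W^{1,1}(0,T)}$.

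For the pointwise estimate under $|f'|\leq L_f$, I would take absolute values in (\ref{vies2}) to obtain
\begin{equation*}
|u(t)|\leq L_f + \int_0^t |\tilde g'(t-s)|\,|u(s)|\,ds,
\end{equation*}
and invoke a Gronwall-type inequality with integrable kernel (the logarithmic Gronwall, or equivalently the weakly singular Gronwall \cite[Theorem 4.2]{Jinbook} adapted to the log case). Since $|\tilde g'|$ is dominated by $Q(1+|\ln t|)$ which belongs to $L^1(0,T)$, the iterated kernel is uniformly bounded and the Gronwall argument yields $|u|\leq QL_f$ on $[0,T]$. The main obstacle, as I see it, is not technical but conceptual: correctly isolating the compatibility condition $f(0)=0$ as the natural constraint in the $\alpha_0=0$ regime, since without it the second-kind equation (\ref{vies2}) is still solvable but its solution no longer satisfies the original first-kind equation (\ref{vie}).
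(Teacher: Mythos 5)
Your proposal is correct and follows essentially the same route as the paper: pass to the second-kind equation (\ref{vies2}), solve it by classical Volterra theory with the exponentially weighted $L^1$ argument of (\ref{vies1}), integrate back to see that $f(0)=0$ is exactly the compatibility condition needed to recover (\ref{vie}), and obtain the pointwise bound from a Gronwall inequality with the integrable kernel $|\tilde g'|\leq Q(1+|\ln t|)$. Your explicit estimate $\|e^{-\lambda t}\tilde g'\|_{L^1(0,T)}\leq Q(1+\ln\lambda)/\lambda$ is a correct filling-in of a detail the paper leaves implicit.
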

\begin{proof}
By (\ref{bndg'}), $\tilde g'\in L^1(0,T)$ such that we apply similar and simpler estimates as (\ref{vie})--(\ref{vies1}) to reach that (\ref{vies2}) admits a unique $L^1$ solution with the estimate 
$$\|u\|_{L^1(0,T)}\leq Q\|f'\|_{L^1(0,T)}. $$
Then we integrate (\ref{vies2}) to obtain
\begin{align*}
\int_0^tu(s)ds&=-\int_0^t \tilde g'*uds+ \int_0^tf'(s)ds\\
&= -\int_0^t (\tilde g*u)'(s)ds+ \int_0^tf'(s)ds=\tilde g*u+f-f(0).
\end{align*}
Thus, the condition $f(0)=0$ implies that the original problem (\ref{vie}) has an $L^1$ solution. The uniqueness follows from that of (\ref{vies2}). The second estimate follows immediately from the weakly singular Gronwall inequality.
\end{proof}
\subsection{The case $\alpha_0=1$} 
For this case, we further assume $\alpha'(0)\neq 0$ to simplify the derivations and highlight the main ideas. We apply the idea of the perturbation method to give a different splitting of $k(t)$ from (\ref{hh3}) as follows
$$k(t)=\lim_{t\rightarrow 0^+}k(t)+\bar g.  $$
Direct calculations show that 
\begin{align}\label{m3as1}
\lim_{t\rightarrow 0^+}k(t)=\lim_{t\rightarrow 0^+}\frac{t^{-\alpha(t)}}{\Gamma(1-\alpha(t))}=\lim_{t\rightarrow 0^+}\frac{1}{\Gamma(2-\alpha(t))}\frac{1-\alpha(t)}{t^{\alpha(t)}}.
\end{align}
As $\lim_{t\rightarrow 0^+}|(1-\alpha(t))\ln t|\leq \lim_{t\rightarrow 0^+}|Qt\ln t|=0$, we have 
$$\lim_{t\rightarrow 0^+}t^{\alpha(t)-1}=\lim_{t\rightarrow 0^+}e^{(\alpha(t)-1)\ln t}=1$$
 such that
\begin{align}\label{lim}
\lim_{t\rightarrow 0^+}\frac{1-\alpha(t)}{t^{\alpha(t)}}=\lim_{t\rightarrow 0^+}\frac{-\alpha'(t)}{t^{\alpha(t)}(\alpha'(t)\ln t+\frac{\alpha(t)}{t})}=- \alpha'(0).
\end{align}
Combine the above three estimates to get
$$k(t)=-\alpha'(0)+\bar g,~~\bar g:=k(t)+\alpha'(0)\text{ such that }\bar g(0)=0. $$
We thus rewrite (\ref{vie}) as
\begin{align*}
-\alpha'(0)\int_0^tu(s)ds=-\bar g*u+f.
\end{align*}
By (\ref{lim}), $\bar g$ is bounded. Then we apply the expression of $k$ in (\ref{m3as1}) to evaluate $\bar g'=k'$ as
\begin{align*}
\bar g'=\Big(\frac{1}{\Gamma(2-\alpha(t))}\Big)'\frac{1-\alpha(t)}{t^{\alpha(t)}}+\frac{1}{\Gamma(2-\alpha(t))}\frac{-\alpha'(t)-(1-\alpha(t))(\alpha'(t)\ln t+\frac{\alpha(t)}{t})}{t^{\alpha(t)}}.
\end{align*}
By (\ref{lim}), the first right-hand side term is bounded. For the second right-hand side term, we rewrite the numerator as
$$-\alpha'(t)(1-\alpha(t))(1+\ln t)+\alpha(t)(-\alpha'(t)-\frac{1-\alpha(t)}{t}). $$
By (\ref{lim}), we have
$$\Big|\frac{-\alpha'(t)(1-\alpha(t))(1+\ln t)}{t^{\alpha(t)}}\Big|\leq Q (1+|\ln t|),$$
and direct calculations yield
\begin{align*}
\Big|\frac{-\alpha'(t)-\frac{1-\alpha(t)}{t}}{t^{\alpha(t)}}\Big|=\Big|\frac{-t\alpha'(t)-(1-\alpha(t))}{t^{1+\alpha(t)}}\Big|=\Big|\frac{\int_0^t\int_s^t-\alpha''(y)dy ds}{t^{1+\alpha(t)}}\Big|\leq Q\frac{t^2}{t^{1+\alpha(t)}}\leq Q.
\end{align*}
Combine the above four estimates to get
$$|\bar g'|\leq Q (1+|\ln t|).$$
Thus, we follow exactly the same analysis procedure as that in Section \ref{sec72} to reach the same conclusions as Theorem \ref{thm72} for the case $\alpha_0=1$.
\subsection{Comments on the restriction}
From the above analysis, we find that for either $\alpha_0$ or $\alpha_1$, the restriction $f(0)=0$ is required to ensure the well-posedness, which is not needed for the case that $0<\alpha_0<1$. The inherent reason is that  for either $\alpha_0=0$ or $\alpha_0=1$, the kernel $k$ in the integral equation (\ref{vie}) is indeed a non-singular kernel. Specifically, for $\alpha_0=0$ we could apply 
$$\lim_{t\rightarrow 0^+}|-\alpha(t)\ln t|\leq \lim_{t\rightarrow 0^+}|Qt\ln t|=0$$
 to get
$\lim_{t\rightarrow 0^+}k(t)=1$, and for $\alpha_0=1$ with $\alpha'(0)\neq 0$, (\ref{lim}) implies $\lim_{t\rightarrow 0^+}k(t)=-\alpha'(0)$. As for the case of non-singular kernels, the solutions are usually bounded under smooth data (cf. Theorem \ref{thm72}), one could pass the limit $t\rightarrow 0^+$ in the integral equation (\ref{vie}) to find that $f(0)$ needs to be $0$ to ensure the consistency. 

\noindent\underline{\textbf{Restriction for subdiffusion (\ref{VtFDEs}) with $\alpha_0=1$}}

The above observation could also be extended for the variable-exponent subdiffusion model (\ref{VtFDEs}) with $\alpha_0=1$. By the estimate (\ref{lim}) we have $\lim_{t\rightarrow 0^+} k(t)=-\alpha'(0)$. If $\alpha'(0)\neq 0$, the kernel $k$ is a non-singular kernel. Thus when we pass the limit $t\rightarrow 0^+$ on both sides of (\ref{VtFDEs}), its well-posedness may require the condition
$$-\Delta u(0)=f(0),$$
if $u(\bm x,t)$ is an absolutely continuous function for each $x\in\Omega$. In fact, such condition has been pointed out in, e.g. \cite{Die,StyAML} for nonlocal-in-time diffusion models with non-singular kernels, and what we did above is showing that the kernel $k(t)$ is such a non-singular kernel when $\alpha_0=1$ and $\alpha'(0)\neq 0$. 

\vspace{0.2in}
\noindent\textbf{Conflict of interest statement}\\
On behalf of all authors, the corresponding author states that there is no conflict of interest.

\vspace{0.1in}
\noindent\textbf{Data availability statement}\\
No datasets were generated or analyzed during the current study.

\vspace{0.1in}
\noindent\textbf{Acknowledgments}\\
This work was partially supported by the National Natural Science Foundation of China (No. 12301555), the National Key R\&D Program of China (No. 2023YFA1008903), and the Taishan Scholars Program of Shandong Province (No. tsqn202306083).


\begin{thebibliography}{10}

\bibitem{AdaFou} R.~Adams and J.~Fournier, {\it Sobolev Spaces}, Elsevier, San Diego, 2003.

\bibitem{Aco} G. Acosta, J. Borthagaray, O. Bruno and M. Maas, Regularity theory and high order numerical methods for the (1D)-fractional Laplacian. {\it Math. Comp.}, 87 (2018),1821--1857.

\bibitem{Akr} G. Akrivis, B. Li and C. Lubich, Combining maximal regularity and energy estimates for time discretizations of quasilinear parabolic equations. {\it Math. Comp.,} 86 (2017), 1527--1552.

\bibitem{Ban} L. Banjai and C. Makridakis, A posteriori error analysis for approximations of time-fractional subdiffusion problems. {\it Math. Comp.}, 91 (2022), 1711--1737.



\bibitem{Bru} H. Brunner, {\it Volterra Integral Equations}. Cambridge Monographs on Applied and Computational Mathematics, vol. 30. Cambridge: Cambridge University Press, 2017.


\bibitem{Che} X. Chen, Z. Chen and J. Wang, Heat kernel for nonlocal operators with variable-order. {\it Stoch. Proc. Appl.}, 130 (2020), 3574--3647.

\bibitem{Cue} E. Cuesta, M. Kirane, A. Alsaedi and B. Ahmad, On the sub-diffusion fractional initial value problem with time variable order. {\it Adv. Nonlinear Anal.}, 10 (2021), 1301--1315.


\bibitem{Del} M. D'Elia, Q. Du, C. Glusa, M. Gunzburger, X. Tian and Z. Zhou, Numerical methods for nonlocal and fractional models. {\it Acta Numer.}, 29 (2020), 1--124.

\bibitem{DelGlu} M. D'Elia and C. Glusa, A fractional model for anomalous diffusion with increased variability: Analysis, algorithms and applications to interface problems. {\it Numer. Methods Partial Differ. Eq.}, 38 (2022), 2084--2103.


\bibitem{Die} K. Diethelm, R. Garrappa, A. Giusti and M. Stynes, Why fractional derivatives with nonsingular kernels
should not be used. {\it Fract. Calc. Appl. Anal.}, 23 (2020), 610--634.

\bibitem{Dubook} Q. Du, {\it Nonlocal Modeling, Analysis, and Computation}, SIAM, Philadelphia,  2019.


 \bibitem{DuSR} Q. Du, M. Gunzburger, R. Lehoucq and K. Zhou, Analysis and approximation of nonlocal diffusion problems with volume constraints. {\it SIAM Rev.}, 54 (2012), 667--696. 


\bibitem{ErvJDE} V. Ervin, Regularity of the solution to fractional diffusion, advection, reaction equations in weighted Sobolev spaces. {\it J. Differ. Equ.} 278 (2021), 294--325.

\bibitem{Erv} V. Ervin and J. Roop, Variational formulation for the stationary fractional advection dispersion equation. {\it Numer. Meth. PDEs}, 22 (2006), 558--576.


\bibitem{Eva} L.~Evans, {\em Partial Differential Equations}, Graduate Studies in Mathematics, V 19, American Mathematical Society, Rhode Island, 1998.


%



\bibitem{GarGiu} R. Garrappa and A. Giusti, A computational approach to exponential-type variable-order fractional differential equations. {\it J. Sci. Comput.}, 96 (2023), 63.

\bibitem{Giga} Y. Giga, Q. Liu and H. Mitake, On a discrete scheme for time fractional fully nonlinear evolution equations.
{\it Asymptot. Anal.}, 120 (2020), 151--162.

\bibitem{Giga2} Y. Giga, H. Mitake and S. Sato, On the equivalence of viscosity solutions and distributional solutions for the time-fractional diffusion equation.
{\it J. Differ. Equ.}, 316 (2022), 364--386.

\bibitem{Gorbook}  R. Gorenflo and S. Vessella, {\it Abel Integral Equations. Analysis and Applications.} Springer-Verlag, Berlin, 1991.

\bibitem{Gor} R. Gorenflo, A. Kilbas, F. Mainardi and S. Rogosin, {\it Mittag-Leffler Functions, Related Topics and Applications}. Springer, New York, 2014.

\bibitem{Gri} G. Gripenberg, S. Londen and O. Staffans, {\it Volterra Integral and Functional Equations}, in: Encyclopedia Math. Appl., vol. 34, Cambridge University, Cambridge,
1990.



 
\bibitem{Jinbook} B. Jin, {\it Fractional differential equations--an approach via fractional derivatives}, Appl. Math. Sci. 206, Springer, Cham, 2021.


\bibitem{JinKia} B. Jin and Y. Kian, Recovery of a distributed order fractional derivative in an unknown medium. {\it Commun. Math. Sci.}, 21 (2023), 1791--1813.
 
\bibitem{JinLaz} B. Jin, R. Lazarov, J. Pasciak and Z. Zhou, Error analysis of a finite element method for the space-fractional parabolic equation.
{\it SIAM J. Numer. Anal.}, 52 (2014), 2272--2294.
 

 

\bibitem{KiaSlo} Y. Kian, M. Slodi\v{c}ka, \'E. Soccorsi and K. Bockstal, On time-fractional partial differential equations of time-dependent piecewise constant order. arXiv:2402.03482.


\bibitem{KiaSocYam} Y. Kian, E. Soccorsi and M. Yamamoto, On time-fractional diffusion equations with space-dependent variable order, {\it Ann. Henri Poincar\'e}, 19 (2018), 3855--3881.

\bibitem{KiaLiu} Y. Kian, Z. Li, Y. Liu and M. Yamamoto, The uniqueness of inverse problems for a fractional equation with a single measurement. {\it Math. Ann.}, 380 (2021), 1465--1495.

\bibitem{Kop} N. Kopteva, Error analysis of an L2-type method on graded meshes for a fractional-order parabolic problem. {\it Math. Comp.}, 90 (2021), 19--40.

\bibitem{Kub} A. Kubica, K. Ryszewska, M. Yamamoto,  {\it Time-fractional differential equations}, Springer Briefs in Mathematics. Springer Nature, Singapore, 2020.

\bibitem{KubRys} A. Kubica, K. Ryszewska and R. Zacher, H\"older continuity of weak solutions to evolution equations with distributed order fractional time derivative. {\it Math. Ann.}, (2024) https://doi.org/10.1007/s00208-024-02806-y

\bibitem{LeSty} K. Le and M. Stynes, An $\alpha$-robust semidiscrete finite element method for a Fokker-Planck initial-boundary value problem with variable-order fractional time derivative. {\it J. Sci. Comput.}, 86 (2021),  22.




\bibitem{LiZha} D. Li, J. Zhang and Z. Zhang, Unconditionally optimal error estimates of a linearized Galerkin method for nonlinear time fractional reaction-subdiffusion equations, {\it J. Sci. Comput.}, 76 (2018), 848--866.

 \bibitem{LiaSty}  H. Liang and M. Stynes, A general collocation analysis for weakly singular Volterra integral equations with variable exponent. {\it IMA J. Numer. Anal.}, 2023. (online).
 
 \bibitem{LiaTan} H. Liao, T. Tang and T. Zhou,  A second-order and nonuniform time-stepping maximum-principle preserving scheme for time-fractional Allen-Cahn equations. {\it J. Comput. Phys.}, 414 (2020), 109473.
 
 \bibitem{LinNak} C. Lin and G. Nakamura,  Classical unique continuation property for multi-term time-fractional evolution equations. {\it Math. Ann.}, 385 (2023), 551--574.
 




\bibitem{LorHar} C. Lorenzo and T. Hartley, Variable order and distributed order fractional operators, {\it Nonlinear Dyn.}, 29 (2002), 57--98.

\bibitem{Lub} C. Lubich, Convolution quadrature and discretized operational calculus. I and II, {\it Numer. Math.}, 52 (1988) 129--145 and 413--425.

\bibitem{Luc} Y. Luchko, Initial-boundary-value problems for the one-dimensional time-fractional diffusion equation. {\it Fract. Calc. Appl. Anal.}, 15 (2012), 141--160.


\bibitem{MclTho} W. McLean and V. Thom\'ee, Numerical solution of an evolution equation with a positive-type memory term.
{\it J. Austral. Math. Soc. Ser. B}, 35 (1993), 23--70.




\bibitem{Mus} K. Mustapha and W. McLean, Discontinuous Galerkin method for an evolution equation with a memory term of positive type.
{\it Math. Comp.}, 78 (2009), 1975--1995.


\bibitem{PanPer} G. Pang, P. Perdikaris, W. Cai and G. Karniadakis, Discovering variable fractional orders of advection-dispersion equations from field data using multi-fidelity Bayesian optimization. {\it J. Comput. Phys.},
 348 (2017), 694--714.

\bibitem{ShiChe} J. Shi and M. Chen, High-order BDF convolution quadrature for subdiffusion models with a singular source term. {\it SIAM J. Numer. Anal.}, 61 (2023), 2559--2579.




\bibitem{SakYam} K.~Sakamoto and M.~Yamamoto, Initial value/boundary value problems for fractional diffusion-wave equations and applications to some inverse problems, {\em J. Math. Anal. Appl.}, 382 (2011), 426--447.

\bibitem{SonKar} F. Song and G. Karniadakis, Variable-order fractional models for wall-bounded turbulent flows. {\it Entropy}, 23 (2021), 782.

\bibitem{StyOriGra} M.~Stynes, E.~O'Riordan, and J.~Gracia, Error analysis of a finite difference method on graded mesh for a time-fractional diffusion equation, {\it SIAM J. Numer. Anal.}, 55 (2017), 1057--1079.
%

\bibitem{StyAML} M. Stynes, Fractional-order derivatives defined by continuous kernels are too restrictive. {\it Appl. Math. Lett.}, 85 (2018), 22--26.

 \bibitem{SunZha}  H.~Sun, Y.~Zhang, W.~Chen and D.~Reeves, Use of a variable-index fractional-derivative model to capture transient dispersion in heterogeneous media, {\it J. Contam. Hydrol.}, 157 (2014) 47--58.

\bibitem{SunChe} H. Sun, W. Chen and Y. Chen,
Variable-order fractional differential operators in anomalous diffusion modeling. {\it Physica A},
 388 (2009), 4586--4592.

\bibitem{SunCha} H. Sun, A. Chang, Y. Zhang and W. Chen, A review on variable-order fractional differential equations: mathematical foundations, physical models, numerical methods and applications. {\it Fract. Calc. Appl. Anal.}, 22 (2019), 27--59.

\bibitem{Tho} V.~Thom\'{e}e, {\em Galerkin Finite Element Methods for Parabolic Problems}, Lecture Notes in Mathematics 1054, Springer-Verlag, New York, 1984.



\bibitem{Uma} S. Umarov and S. Steinberg, Variable order differential equations with piecewise constant order-function and diffusion with changing modes. {\it Z. Anal. Anwend.}, 28 (2009), 131--150.

\bibitem{WuZha} Y. Wu and Y. Zhang, Variable-order fractional Laplacian and its accurate and efficient computations with meshfree methods. arXiv:2402.02284.

\bibitem{YuZhe} B. Yu, X. Zheng, P. Zhang and L. Zhang, Computing solution landscape of nonlinear space-fractional problems via fast approximation algorithm. {\it J. Comput. Phys.}, 468 (2022), 111513.

\bibitem{ZacMA} R. Zacher, A De Giorgi-Nash type theorem for time fractional diffusion equations. {\it Math. Ann.}, 356 (2013), 99--146.

\bibitem{Zac19} R. Zacher, Time fractional diffusion equations: solution concepts, regularity, and long-time behavior.
{\it Handbook of fractional calculus with applications}, Vol. 2, 159--179,
De Gruyter, Berlin, 2019.

\bibitem{ZenZhaKar} F.~Zeng, Z.~Zhang and G.~Karniadakis, A generalized spectral collocation method with tunable accuracy for variable-order fractional differential equations, {\it SIAM J. Sci. Comput.}, 37 (2015), A2710--A2732.


\bibitem{ZhaZho} Z. Zhang and Z. Zhou, Backward diffusion-wave problem: Stability, regularization, and approximation.
{\it SIAM J. Sci. Comput.}, 44 (2022), A3183--A3216.

\bibitem{Zha} T. Zhao, Z. Mao and G. Karniadakis, Multi-domain spectral collocation method for variable-order nonlinear fractional differential equations. {\it Comput. Meth. Appl. Mech. Engrg.}, 348 (2019), 377--395.

\bibitem{ZheFCAA}  X. Zheng, Approximate inversion for Abel integral operators of variable exponent and applications to fractional Cauchy problems. {\it Fract. Calc. Appl. Anal.}, 25 (2022), 1585--1603.

\bibitem{ZheLi} X. Zheng, Y. Li and W. Qiu,   
Local modification of subdiffusion by initial Fickian diffusion: Multiscale modeling, analysis and computation. arXiv:2401.16885.

%

\bibitem{ZheWanSINUMa} X. Zheng and H. Wang, An optimal-order numerical approximation to variable-order space-fractional diffusion equations on uniform or graded meshes. {\it SIAM J. Numer. Anal.}, 58 (2020), 330--352.

\bibitem{ZheWanSINUMb} X. Zheng and H. Wang, An error estimate of a numerical approximation to a hidden-memory variable-order space-time fractional diffusion equation, {\it SIAM J. Numer. Anal.}, 58 (2020), 2492--2514.







\end{thebibliography}
\end{document}